\newtheorem{theorem}{Theorem}[subsection]
\newtheorem{lemma}[theorem]{Lemma}
\newtheorem{definition}[theorem]{Definition}
\newtheorem{corollary}[theorem]{Corollary}
\newtheorem{proposition}[theorem]{Proposition}
\newcommand{\qed}{\rule{1mm}{3mm}}     %QED is a BOX
\newenvironment{proof}{\vspace*{\parsep}\noindent {\bf proof:}}{\qed\\[1em]}
\begin{document}

\title{IKP and Friends}
\author{Robert S. Lubarsky\thanks{I would like to thank Anil Nerode for
bringing the subject of intuitionistic set theory to my attention, Michael Rathjen for
his encouragement in bringing this work to completion, and the referee for the careful
reading and insistence on including the details, even when I didn't think they were 
necessary.}
\\1755 NE 18th St.\\Ft. Lauderdale, FL 33305\\USA
\\Robert.Lubarsky@alum.mit.edu
\\Robert.Lubarsky@att.net}
\date{December 2001}
\maketitle
\section{Introduction}

There has been increasing interest in intuitionistic methods over the years.
Still, there has
been relatively little work on intuitionistic set theory, and most of that has
been on
intuitionistic ZF. This investigation is about intuitionistic admissibility and
theories of
similar strength.

There are several more particular goals for this paper. One is just to get some
more Kripke
models of various set theories out there. Those papers that have dealt with IZF
usually
were more proof-theoretic in nature, and did not provide models. Furthermore,
the
inspirations for many of the constructions here are classical forcing
arguments.
Although
the correspondence between the forcing and the Kripke constructions are not
made
tight,
the relationship between these two methods is of interest (see \cite{jim} for
instance) and
some examples, even if only suggestive, should help us better understand
the relationship between forcing and Kripke constructions. Along different
lines, the
subject of
least and greatest fixed points of inductive definitions, while of interest to
computer
scientists, has yet to be studied constructively, and probably holds some
surprises.
Admissibility is of course the proper set-theoretic context for this study.
Finally, while most
of the classical material referred to here has long been standard, some of it
has not been
well codified and may even be unknown, so along the way we'll even fill in
a gap
in the
classical literature.

The next section develops the basics of IKP, including some remarks on fixed
points of
inductive definitions. After that some classical theories related to KP are
presented, and
the question of which imply which others is completely characterized. While
they
are not
equivalent in general, when restricted to initial segments of (classical) L
they
are.
However, the section after that shows that in intuitionistic L this equivalence
breaks down.
We close with some questions.

\section{IKP} \label{secIKP}
The axioms of classical KP are: Empty Set, Pairing, Union, Extensionality,
Foundation (as
a schema for all definable classes), $\Delta_{0}$ Comprehension (also known as
$\Delta_{0}$ Separation), and $\Delta_{0}$ Bounding (also known as $\Delta_{0}$
Collection).
Often Infinity is adjoined; we will also use the axiomatisation with
Infinity in this paper. There is not much trouble adapting these to
an
intuitionistic setting. The concept of a $\Delta_{0}$ formula needs no change.
As usual,
Foundation must be replaced by $\in$-Induction: $\forall x (\forall y \in x \;
\phi (y) \rightarrow \phi (x)) \rightarrow \forall x \; \phi (x)$, where $\phi$
is a formula in the language of set theory. Also, the standard version of
Infinity, as
in \cite{powell} for instance ($\exists x \mbox{ }[(\exists y \; y \in x)
\wedge
(\forall y \in x \; \exists z \in x \; y \in z)]$), won't do, because you need
Power Set and $\Pi_{1}$
Comprehension to get $\omega$; instead, take the version of Infinity from
\cite{me} ($\exists x \mbox{ }[0 \in x \; \wedge \; \forall y \in x \;
\exists z
\in x (z = y \cup \{y\}) \; \wedge \; \forall y \in x (y = 0 \; \vee \; \exists
z \in x \mbox{ }y = z \cup \{z\})]$), which
axiomatizes $\omega$ straightforwardly. We take IKP to be KP with these few
adaptations.

This choice can be further justified. Two other axiomatizations of KP are to
replace
$\Delta_{0}$ Bounding with either $\Sigma$ Bounding or $\Sigma$ Reflection.
Taking the
$\Sigma$ formulas to be the closure of the $\Delta_{0}$ formulas under $\wedge,
\vee,
\exists$, and bounded quantification, these equivalences hold
intuitionistically, for the same reasons they do classically. Rather than
repeating these completely standard arguments here,
the reader is referred to \cite{barwise}, theorems I.4.3 and I.4.4, the proofs
of which go through unchanged in IKP. In addition, under IKP every $\Sigma$
formula is equivalent to a $\Sigma_{1}$ formula, proven inductively on
formulas.

One could also consider many of the standard consequences of KP. The ones
of interest to us are:
\begin{enumerate}
\item	$\Sigma$ Replacement,
\item	Strong $\Sigma$ Replacement,
\item	$\Delta$ Comprehension,
\item	$\Sigma$ Recursion,
\item the Second Recursion Theorem, and
\item	$\Sigma$ Inductive Definitions.
\end{enumerate}
$\Sigma$ Replacement and Strong $\Sigma$ Replacement ($\forall x \in A \;
\exists y \;
\phi(x, y) \rightarrow \exists f \; \forall x \in A \; f(x) \neq \emptyset
\wedge \forall y \in f(x) \;
\phi(x, y)$) hold intuitionistically, by the same proofs as in the classical
case; again, the reader is referred to \cite{barwise}, theorems I.4.6 and
I.4.7.
$\Delta$
Comprehension requires considerably more care. Usually a $\Delta$ property
is defined as being
given by a pair $\phi, \psi$ of $\Sigma$ and $\Pi$ formulas respectively, but
when it's used
the first thing you do is deal with $\neg \psi$ as a $\Sigma$ formula. This
won't work
intuitionistically. The next obvious guess at what a $\Delta$ property
should be is a pair
$\phi, \psi$ of
$\Sigma$ formulas such that $\forall x \; [(\phi(x) \vee \psi(x)) \; \wedge \;
\neg(\phi(x)
\wedge \psi(x))]$. The problem here is that too much is being demanded:
such a definition would be almost impossible to meet. Consider a very
simple property, such as ``x = $\emptyset$". What would be a $\Delta$
definition of this, according to the above notion of $\Delta$? If $\phi(x)$
is ``x = $\emptyset$" and $\psi(x)$ is ``x $\not = \emptyset$" then we
certainly do not have $\forall x \; (\phi(x) \vee \psi(x))$. To address
these (and other) problems, we will be content with defining $\Delta_{1}$,
noting that $\Sigma$ resp. $\Pi$ formulas are under IKP provably equivalent
to $\Sigma_{1}$ resp. $\Pi_{1}$ formulas. A $\Delta_{1}$ property of a
variable x is given by a pair of $\Delta_{0}$ formulas $\phi(x,y),
\psi(x,y)$ with free variables x,y such that
$$\forall x [\exists y \neg \neg (\phi(x,y) \vee \psi(x,y)) \; \wedge
\forall y \neg (\phi(x,y) \wedge \psi(x,y))].
$$
With this notion of $\Delta_{1}$, $\Delta_{1}$ Comprehension is provable in
IKP; see \cite{barwise}\ theorem I.4.5.

That a function
defined by $\Sigma$ recursion is $\Sigma_{1}$ definable holds in both KP and
IKP; again, the proof from \cite{barwise}, theorem I.6.4 holds.
The Second Recursion Theorem, as stated and proved in \cite{barwise} V.2, pretty
much stands as is, except that when syntax and semantics are introduced in ch.
III all of the standard Boolean connectives must be included as primitives
(since, for instance, in our context $\rightarrow$ cannot be defined in
terms of $\neg$ and $\vee$). The fact that the least fixed point of a
positive inductive $\Sigma$ definition is
$\Sigma_{1}$ definable actually requires a bit of care in one point, so the
development of this theory will be summarized here, with the reader referred to
\cite{barwise} for the details.

In IZF, suppose $\Gamma : {\cal P}(X) \rightarrow {\cal P}(X)$ is a monotone
inductive operator on X. Consider $\{ Y | \Gamma(Y) \subseteq Y \}$. This
set is
non-empty,
since it contains X itself as a member. Let $\Gamma_{fix} = \bigcap \{ Y |
\Gamma(Y) \subseteq Y \}$.
By the monotonicity of $\Gamma, \Gamma(\Gamma_{fix}) \subseteq \Gamma_{fix}$.
Applying
$\Gamma$ again we get $\Gamma \Gamma (\Gamma_{fix}) \subseteq \Gamma
\Gamma_{fix}$, so
$\Gamma (\Gamma_{fix}) \in \{ Y | \Gamma(Y) \subseteq Y \}$, and $\Gamma_{fix}
\subseteq \Gamma (\Gamma_{fix})$. Hence $\Gamma_{fix} = \Gamma (\Gamma_{fix})$,
and
$\Gamma_{fix}$ is a fixed point. Moreover, by its definition, $\Gamma_{fix}$ is
the least
fixed point. $\Gamma_{fix}$ can also be defined from the bottom up. Inductively
on
ordinals $\beta$, let $\Gamma_{< \beta} = \bigcup_{\alpha < \beta}
\Gamma_{\alpha},
\Gamma_{\beta} = \Gamma (\Gamma_{< \beta})$. (Note the standard way to
induct on ordinals
intuitionistically, which avoids the successor-or-limit case split.) Let
$\Gamma_{\infty}
= \bigcup_{\beta \in ORD} \Gamma_{\beta}$, which can be shown to be a set using
Bounding.
(Note that Replacement will not do! Let $\psi (Y, \beta)$ be ``Y =
$\Gamma_{\beta}$";
since such a $\beta$ cannot be uniquely chosen, we need (Comprehension and)
Bounding
in order to get a range for $\psi$, a set A of ordinals such that $\forall Y
\subseteq X$ if $\exists
\beta$ Y = $\Gamma_{\beta}$ then $\exists \beta \in$ A Y = $\Gamma_{\beta}$.
Letting $\gamma$
= TC(A), $\Gamma_{\infty} = \Gamma_{\gamma}$. This is where the current
argument
differs
from its classical version, and recurs when discussing admissible sets proper.)
It's easy to see that $\Gamma_{\beta} \subseteq \Gamma_{fix}$, inductively on
$\beta$, and that $\Gamma_{\infty}$ is a fixed point, making $\Gamma_{\infty}$
equal
to $\Gamma_{fix}$.

Now particularize to the case where $\Gamma$ is given by an X-positive $\Sigma$
formula $\psi$:
$\Gamma$(Y) = \{ x $\in$ X $\mid \psi$(x, X) \}. (What follows is adapted from
\cite{barwise}, VI.2.6.) If {\bf M} is admissible (i.e. {\bf M}
$\models$ IKP), then, letting $\alpha$ = ORD({\bf M}), $\Gamma_{\infty} =
\Gamma_{< \alpha}$,
as follows. In {\bf M}, the relation ``x $\in \Gamma_{\beta}$" is a $\Sigma_{1}$
relation, using the Second Recursion Theorem. So $\phi(x, \Gamma_{<\alpha})$ is
a $\Sigma$
relation, where ``y $\in \Gamma_{<\alpha}$" is interpreted as ``$\exists \beta$
y $\in
\Gamma_{\beta}$". If $\phi(x, \Gamma_{<\alpha})$ holds, then, by $\Sigma$
Bounding,
there is a set of ordinals A $\in$ {\bf M} such that $\phi(x,
\bigcup_{\beta \in
A} \Gamma_{\beta})$
holds. Letting $\gamma$ = TC(A), $\phi(x, \Gamma_{<\gamma})$, and x $\in
\Gamma_{\gamma}$.
So $\Gamma_{\alpha} = \Gamma_{<\alpha}$ is a fixed point. Since
$\Gamma_{<\alpha}
\subseteq \Gamma_{\infty}$ too, $\Gamma_{<\alpha} = \Gamma_{\infty}$. It has
already been
observed that $\Gamma_{<\alpha}$ is $\Sigma_{1}$ definable over {\bf M}.

So where are the differences between KP and IKP? Typically properties that a
classical
set theorist identifies automatically become inequivalent in an intuitionistic
setting. No one
would believe that every property around admissibility has the same status in
IKP.

If one of the pleasures of intuitionism is to surprise our intuitions, then
you're in for a treat.
You would have expected to find some difference between KP and IKP among the
most
common properties, those already cited. So there's no use having the
differences
there,
where you're already looking for them. Rather, the differences show up where
you'd never
think to check, right under your nose. Consider the basic axiom, $\Delta_{0}$
Bounding:
$$
\forall x \in A \; \exists y \; \phi(x, y) \rightarrow \exists B \; \forall x
\in A \; \exists y \in B \;
\phi(x, y), \; \phi \; \in \; \Delta_{0}.
$$
Consider the contrapositive with the negations pushed through (the ``classical
contrapositive'', with $\phi$ absorbing the negation):
$$
\forall B \; \exists x \in A \; \forall y \in B \; \phi(x, y) \rightarrow 
\exists x \in A \; \forall y \;
\phi(x, y), \; \phi \; \in \; \Delta_{0}.
$$
We call this latter property $\Delta_{0}$ Uniformity. Of course classically
$\Delta_{0}$
Uniformity and $\Delta_{0}$ Bounding are equivalent as contrapositives, but
intuitionistically they're not, as we'll see later.

An alternative to $\Delta_{0}$ Bounding classically is $\Sigma$ Reflection:
$$
\phi \rightarrow \exists A \; \phi^{(A)}, \; \phi \in \Sigma,
$$
where the superscript means bound all as yet unbound quantifiers by the
superscript. The classical contrapositive is:
$$
\forall A \; \phi^{(A)} \rightarrow \phi, \; \phi \in \Pi.
$$
The latter property is called $\Pi$ Persistence. Finally, regarding the last
alternative
axiomatization of KP, $\Sigma$ Bounding, its classical contrapositive is
$\Pi$ Uniformity.

\begin{proposition}
Over IKP - $\Delta_{0}$ Bounding, the following are equivalent:
\begin{enumerate}
\item $\Pi$ Persistence
\item $\Pi$ Uniformity
\item $\Delta_{0}$ Uniformity.
\end{enumerate}
\end{proposition}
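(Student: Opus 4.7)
The plan is to establish the cycle $(1) \Rightarrow (2) \Rightarrow (3) \Rightarrow (1)$; the middle step is immediate, since every $\Delta_0$ formula is $\Pi$. Two routine preliminary lemmas, each proved by induction on $\phi \in \Pi$, will be used throughout: (a) that $\phi \to \phi^{(C)}$ for every $C$ (bounding unbounded universals only weakens a $\Pi$-formula), and (b) the monotonicity statement that $A \supseteq A'$ implies $\phi^{(A)} \to \phi^{(A')}$. Each is a short case analysis running through the closure operations $\wedge$, $\vee$, unbounded $\forall$, and bounded $\forall$ and $\exists$.

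For $(1) \Rightarrow (2)$, given $\phi \in \Pi$, the formula $\psi := \exists x \in A \, \forall y \, \phi(x,y)$ is itself $\Pi$ by closure of $\Pi$ under unbounded $\forall$ and bounded $\exists$. Using lemma (a), the hypothesis $\forall C \, \exists x \in A \, \forall y \in C \, \phi(x,y)$ of $\Pi$ Uniformity upgrades directly to $\forall C \, \psi^{(C)}$, whereupon $\Pi$ Persistence yields $\psi$, the desired conclusion.

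The substantial direction is $(3) \Rightarrow (1)$, proved by induction on $\phi \in \Pi$ (with parameters freely allowed) that $\forall A \, \phi^{(A)} \to \phi$. The base ($\phi \in \Delta_0$) and the $\wedge$ case are trivial. For $\phi = \forall z \, \phi_1(z)$, given any $z$ and any $A$, apply the hypothesis at the set $A \cup \{z\}$ to extract $\phi_1^{(A \cup \{z\})}(z)$, then descend by monotonicity (b) to $\phi_1^{(A)}(z)$; the inductive hypothesis on $\phi_1$ (with $z$ as parameter) finishes the case, and the bounded $\forall$ subcase is similar and easier. The interesting cases are bounded $\exists$ and $\vee$, both of which use $\Delta_0$ Uniformity via a common device. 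For $\phi = \exists z \in w \, \phi_1(z)$, set $\theta(z, A) := \phi_1^{(A)}(z)$, which is $\Delta_0$ because every quantifier of $\phi_1^{(A)}$ is bounded. The hypothesis gives $\forall A \, \exists z \in w \, \theta(z, A)$; for any $B$, apply the hypothesis at $C := \bigcup B$ to obtain $z \in w$ with $\theta(z, C)$, and then by (b), $\theta(z, A)$ holds for every $A \subseteq C$, in particular for every $A \in B$. Hence $\forall B \, \exists z \in w \, \forall A \in B \, \theta(z, A)$, and $\Delta_0$ Uniformity produces $\exists z \in w \, \forall A \, \theta(z, A)$; the inductive hypothesis on $\phi_1$ then yields $\exists z \in w \, \phi_1(z)$. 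For $\phi = \phi_1 \vee \phi_2$, take the $\Delta_0$ predicate $\theta(z, A) := (z = 0 \to \phi_1^{(A)}) \wedge (z = 1 \to \phi_2^{(A)})$ with bounding set $\{0, 1\}$; the same $\bigcup B$ maneuver and $\Delta_0$ Uniformity produce a single $z \in 2$ whose value picks out, once and for all, a disjunct that holds.

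The main obstacle is the disjunction case. There, the hypothesis $\forall A\,(\phi_1^{(A)} \vee \phi_2^{(A)})$ gives only an $A$-dependent choice of disjunct, and one must stabilize it to a single disjunct that holds outright. The encoding into a bounded $\exists z \in \{0, 1\}$, combined with the $\bigcup B$ monotonicity trick, is precisely what lets $\Delta_0$ Uniformity carry out that stabilization.
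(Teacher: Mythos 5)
Your proof is correct and follows essentially the same route as the paper's: the same cycle $(1)\Rightarrow(2)\Rightarrow(3)\Rightarrow(1)$, the same two monotonicity lemmas on relativized $\Pi$-formulas, and in $(3)\Rightarrow(1)$ the same induction with the $A\cup\{z\}$ device for unbounded $\forall$ and the $\bigcup B$ plus $\Delta_{0}$ Uniformity device for bounded $\exists$ and for $\vee$ (your encoding of the disjuncts by implications from $z=0$, $z=1$ rather than by conjunctions is an immaterial variant). No gaps.
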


\begin{proof}
In what follows, we will rely heavily on the fact that, for $\phi$ a $\Pi$
formula,
if B $\subseteq$ A then $\phi^{(A)} \rightarrow \phi^{(B)}$. This is proved by
induction on $\Pi$ formulas. The same also holds for A = V: if B is a set, then
$\phi \rightarrow \phi^{(B)}$.

(1) $\rightarrow$ (2) Suppose that $\forall B \; \exists x \in A \; \forall y
\in B \;
\phi(x, y), \phi$ a $\Pi$ formula. By the above mentioned fact, $\forall B \;
\exists x \in A \; \forall y \in B \;
\phi(x, y)^{(B)}$, i.e. $\forall B \; (\exists x \in A \; \forall y \; \phi(x,
y))^{(B)}$.
By $\Pi$ Persistence, $\exists x \in A \; \forall y \; \phi(x, y)$, as was
to be
shown.

(2) $\rightarrow$ (3) trivial

(3) $\rightarrow$ (1) We show that $\Pi$ Persistence holds for all $\Pi$
formulas $\phi$
by induction on $\phi$.

The base case, $\phi \; \Delta_{0}$, is trivial, since $\phi^{(A)} = \phi$.

Suppose $\phi = \psi_{0} \vee \psi_{1}$. Assume $\forall A (\psi_{0} \vee
\psi_{1})^{(A)}$,
which equals $\forall A (\psi_{0}^{(A)} \vee \psi_{1}^{(A)})$. We claim that
$\forall B \; \exists x \in \{0, 1\} \; \forall y \in B \; ((x=0 \wedge
\psi_{0}^{(y)})
\vee (x=1 \wedge \psi_{1}^{(y)}))$. To see this, by assumption $\forall A
(\psi_{0}^{(A)} \vee \psi_{1}^{(A)})$;
letting be A be $\bigcup$ B, we have $\psi_{0}^{(\bigcup B)} \vee
\psi_{1}^{(\bigcup B)}$.
If $\psi_{0}^{(\bigcup B)}$ let x be 0, and note that $\psi_{0}^{(\bigcup B)}
\rightarrow \psi_{0}^{(y)}$ since y $\subseteq \bigcup$ B. Similarly if
$\psi_{1}^{(\bigcup B)}$.
Applying $\Delta_{0}$ Uniformity to the claim, we get
$\exists x \in \{0, 1\} \; \forall y \; ((x=0 \wedge \psi_{0}^{(y)})
\vee (x=1 \wedge \psi_{1}^{(y)}))$. If the value of x which witnesses this
sentence
is 0, then $\forall y \; \psi_{0}^{(y)}$, and by induction $\psi_{0}$.
Similarly, if
the value of x which witnesses this sentence is 1, we get $\psi_{1}$. Since
either
0 or 1 witnesses this sentence, $\psi_{0} \vee \psi_{1}$, as was to be shown.

Suppose $\phi = \exists u \in v \; \psi$. Assume $\forall A (\exists u \in v \;
\psi)^{(A)}$,
which equals $\forall A \; \exists u \in v \; \psi^{(A)}$. We claim $\forall B
\;
\exists u \in v \;
\forall y \in B \; \psi^{(y)}$. To see this, again use the assumption with A =
$\bigcup$B. By $\Delta_{0}$ Uniformity, $\exists u \in v \; \forall y \;
\psi^{(y)}$,
and by induction $\exists u \in v \; \psi$.

The other cases are easier. If $\phi = \psi_{0} \wedge \psi_{1}$,
$\forall A (\psi_{0} \wedge \psi_{1})^{(A)} = \forall A (\psi_{0}^{(A)} \wedge
\psi_{1}^{(A)})
\rightarrow \forall A \psi_{0}^{(A)} \wedge \forall A \psi_{1}^{(A)}
\rightarrow$ (by induction) $\psi_{0} \wedge \psi_{1}$. If $\phi = \forall
u \in
v \; \psi,
\; \forall A \; (\forall u \in v \; \psi)^{(A)} = \forall A \; \forall u \in v
\; \psi^{(A)}
\rightarrow \forall u \in v \; \forall A \; \psi^{(A)} \rightarrow$ (by
induction)
$\forall u \in v \; \psi$. Finally, if $\phi = \forall u \, \psi$, suppose
$\forall A \, (\forall u \, \psi)^{(A)}$\ which equals $\forall A \,
(\forall u
\in A \, \psi^{(A)})$.
It follows that $\forall u \, \forall B \, \psi^{(B)}$: for arbitrary values v
and C
for u and B respectively, let A = C $\cup$ \{v\} in the assumption above,
yielding
$\forall u \in C \cup \{v\} \, \psi^{(C \cup \{v\})}(u)$. In particular,
for u =
v,
$\psi^{(C \cup \{v\})}(v)$, and, shrinking the bound, $\psi^{(C)}(v)$. So
$\forall u \, \forall B \, \psi^{(B)}(u)$, and, by induction, $\forall u \,
\psi(u)$.
\end{proof}

In what follows, $\Pi$ Persistence will refer to the theory IKP - $\Delta_{0}$
Bounding + $\Pi$ Persistence (or $\Pi$ Uniformity or $\Delta_{0}$ Uniformity,
from the proposition), as well as to the axiom scheme, except when such an
ambiguity
might cause confusion.

Although KP and $\Pi$ Persistence are equivalent classically, intuitionistically
they are (implicationally) incomparable, as follows. To see that IKP does
not imply $\Pi$ Persistence, consider the partial order
that has a bottom element
$\bot$ and $\omega$-many incomparable nodes $n$ ($n \geq$ 1) larger than
it. Let
the Kripke structure {\bf M} have L$_{\omega_{1}^{CK}}$ at $\bot$,
L$_{\omega_{n}^{CK}}$ ($\omega_{n}^{CK}$ being the nth admissible ordinal)
at $n$, and Id for transition
functions. $\bot \models$ IKP, as follows. {\bf M}$_{\bot}$ already
contains $\emptyset$ and $\omega$, and the universe at each node is closed
under pairing and union. These universes are also transitive sets, and the
$\in$-relation of {\bf M} is merely a restriction of $\in$ (of {\bf V}), so
Extensionality and Foundation hold. Both $\Delta_{0}$ Comprehension and
$\Delta_{0}$ Bounding are based on the fact that sets in {\bf M} don't
grow, or, to put it more formally, {\bf M}$_{n}$ is an end-extension of
{\bf M}$_{\bot}$. As a consequence, Excluded Middle holds in {\bf M} for
$\Delta_{0}$ formulas. So for $\Delta_{0}$ Comprehension, suppose $\phi(x)$
is a $\Delta_{0}$ formula with parameters from {\bf M}$_{\bot}$ =
L$_{\omega_{1}^{CK}}$, and X $\in$ {\bf M}$_{\bot}$. Let A = \{ x $\in$ X
$\mid \phi(x)$\}. A $\in$ L$_{\omega_{1}^{CK}}$ = {\bf M}$_{\bot}$, and
$\bot \models$ `` A = \{ x $\in$ X $\mid \phi(x)$\}''. Regarding
$\Delta_{0}$ Bounding, if $\bot \models$ ``f is total'' then $1 \models$
``f is total'',
and L$_{\omega_{1}^{CK}}$ $\models$ ``f is total''. By the admissibility of
L$_{\omega_{1}^{CK}}$, A = rng(f) $\in$ L$_{\omega_{1}^{CK}}$. By Excluded
Middle and Absoluteness for $\Delta_{0}$ formulas, $\bot \models$ ``A =
rng(f)''. But $\Delta_{0}$ Uniformity fails: $\bot \models \forall a \;
(\exists n \in \omega \; \forall x \in a$ if x is a sequence of admissible
ordinals then length(x) $<$ n), but $ \bot \not\models \exists n \in \omega \;
\forall x$ (if x is a sequence of admissible ordinals then length(x) $<$ n).

In the other direction, consider
$\omega$ as a partial order, and let $\alpha_{n}$ be an $\omega$-sequence
cofinal in
$\omega_{1}^{CK}$. Let the Kripke model {\bf M} have L$_{\alpha_{n}}$ at node n
(with Id
as the transition functions). It is easy to see that no node forces IKP: given
n, just pick a
witness that L$_{\alpha_{n}}$ is not admissible, such a witness being a
$\Delta_{0}$
function unbounded in L$_{\alpha_{n}}$. But $\Pi$ Persistence holds:
Suppose for a node $n$, $n \models ``\forall A \; \phi^{(A)}$'' (and hence
the same for all nodes $m, m \geq n$). For $A \in$ L$_{\alpha_{n}}$,
$\phi^{(A)}$ is $\Delta_{0}$, and {\bf M}$_{m}$ is an end-extension of {\bf
M}$_{n}$ for $m \geq n$; this means that the truth of $\phi^{(A)}$ can be
determined locally, i.e. L$_{\alpha_{n}} \models \phi^{(A)}$. ``A'' ranges
over all sets in L$_{\alpha_{m}}$ for all $m \geq n$, hence over
L$_{\omega_{1}^{CK}}$, so we have L$_{\omega_{1}^{CK}}$ $\models \forall A
\; \phi^{(A)}$. By $\Pi$ Persistence classically,
L$_{\omega_{1}^{CK}} \models \phi$. In showing that $n \models \phi$, when
unraveling
$\phi$ choices will have to be made, with $\exists$ and $\vee$. Use the
truth of
$\phi$ in
L$_{\omega_{1}^{CK}}$ as a guide.

So we have two different theories, IKP and $\Pi$ Persistence. Is there a
difference between the mathematics you can do in them? Yes,
again by duality. We have already seen that in IKP the least fixed point of a
positive
inductive $\Sigma$ definition is $\Sigma_{1}$ definable. Classically it would
follow that the
greatest fixed point of a positive inductive $\Pi$ definition is $\Pi_{1}$
definable. However,
intuitionistically, this latter property seems to require $\Pi$ Persistence as
well
as IKP.

In a bit more detail, if $\Gamma \, : \, {\cal P}(X) \rightarrow {\cal
P}(X)$ is
a monotone
inductive operator on X, let A be $\{ Y | Y \subseteq \Gamma(Y) \}$ and
$\Gamma_{fix}
= \bigcup A$. By the monotonicity of $\Gamma, \Gamma_{fix} \subseteq
\Gamma(\Gamma_{fix})$.
Applying $\Gamma$ again, $\Gamma(\Gamma_{fix}) \subseteq
\Gamma(\Gamma(\Gamma_{fix}))$,
so $\Gamma(\Gamma_{fix}) \in$ A and $\Gamma(\Gamma_{fix}) \subseteq
\Gamma_{fix}$.
Hence $\Gamma_{fix} = \Gamma(\Gamma_{fix})$, and $\Gamma_{fix}$ is a fixed
point.
Moreover, by its definition, $\Gamma_{fix}$ is the greatest fixed point. From
the
bottom up, let $\Gamma_{<\beta} = \bigcap_{\alpha < \beta} \Gamma_{\alpha}$,
$\Gamma_{\beta} = \Gamma(\Gamma_{<\beta})$, and $\Gamma_{\infty} =
\bigcap_{\beta \in ORD} \Gamma_{\beta}$.
$\Gamma_{fix} \subseteq \Gamma_{\beta}$, by induction on $\beta$, and
$\Gamma_{\infty}$
is a fixed point, so $\Gamma_{fix} = \Gamma_{\infty}$. If $\Gamma$ is given
by a
$\Pi$ formula $\phi$, then, over IKP + $\Pi$ Persistence, ``x $\in
\Gamma_{\beta}$"
is a $\Pi$ relation, by the duals of the arguments for the $\Sigma$ case.
(This can be seen by adapting \cite{barwise}, III.1 and V.1 and 2. The
satisfaction
relation {\bf M} $\models \phi$[s] is $\Delta_{1}$ over IKP. That the universal
$\Pi$ predicate $\Pi$-Sat($\phi$, s) can be defined as ``$\forall A
\phi^{(A)}$[s]"
uses $\Pi$ Persistence. This fact is then plugged into the dual to the proof of
the Second Recursion Theorem to get that for all R-positive $\Pi$ formulas
$\phi$(x, y, R) there is a $\Pi$ formula $\psi$(x, y) such that $\psi$(x, y) iff
$\psi$(x, y, $\lambda$x.$\psi$(x, y)). This suffices to get ``x $\in
\Gamma_{\beta}$"
to be a $\Pi$ relation of x and $\beta$.) Let {\bf M} $\models$ IKP + $\Pi$
Persistence, and $\alpha$ = ORD({\bf M}). The claim is that $\Gamma_{<\alpha} =
\Gamma_{\alpha}$. To see this, suppose x $\in \Gamma_{<\alpha}$, i.e. {\bf M}
$\models \forall \beta \phi (x, \Gamma_{<\beta})$, a $\Pi$ relation. Letting
$\Gamma_{A}$ be $\bigcap_{\alpha \in A \cap ORD} \Gamma_{\alpha}$, {\bf M}
$\models \forall A \phi(x, \Gamma_{A})$ (because $\beta$ can be chosen to be
TC(A) $\cap$ ORD, and then $\Gamma_{<\beta} \subseteq \Gamma_{A}$), and {\bf M}
$\models \forall A \; \phi^{(A)}(x, \Gamma_{A})$. By $\Pi$ Persistence,
{\bf M} $\models \phi(x, \Gamma_{<\alpha})$, so x $\in \Gamma_{\alpha}$ and
$\Gamma_{<\alpha} \subseteq \Gamma_{\alpha}$. On general principles
$\Gamma_{\alpha}
\subseteq \Gamma_{<\alpha}$, so $\Gamma_{\alpha} = \Gamma_{<\alpha}$, and
$\Gamma_{<\alpha}$ is a fixed point. Since $\Gamma_{<\alpha} \supseteq
\Gamma_{\infty}$,
$\Gamma_{<\alpha} = \Gamma_{\infty}$.

So, while IKP suffices to get least fixed points to be $\Sigma_{1}$ definable,
it seems as though $\Pi$ Persistence is necessary to get greatest fixed points
to be $\Pi_{1}$ definable (although, to be fair, such necessity has yet to be
proven). Furthermore, the second of
the models above, intended to show that $\Pi$ Persistence does not imply IKP,
also shows
that $\Pi$ Persistence doesn't prove that lfp's are $\Sigma_{1}$ definable
(Kleene's {\it O}
has an appropriate inductive definition but is not $\Sigma_{1}$ definable over
{\bf M}.). So
these two constructions, least and greatest fixed point, so near to each other
classically,
are apparently more easily splittable intuitionistically. This matter will be
pursued in the
questions at the end of this paper.

It bears observation that $\Pi$ Persistence, as a theory, is quite weak, in that
one cannot easily construct sets in it. For instance, it does not even prove the
totality of the function $\alpha \mapsto L_{\alpha}$, as the following model
shows. Let $\langle \, \alpha_{n} \mid n \in \omega \, \rangle$ be a strictly
increasing sequence
of limit ordinals cofinal in $\omega_{1}^{CK}$. For X a set, let Pair(X) be the
set of all pairs from X (including singletons, as degenerate pairs), Union(X)
the set of all unions from X (i.e. \{$\bigcup x \mid x \in$ X\}), and
$\Delta_{0}$(X)
the set of all $\Delta_{0}$-definable subsets of members of X (assume here that
X
is transitive, just for simplicity). Let Close(X) be X $\cup$ Pair(X) $\cup$
Union(X) $\cup$ $\Delta_{0}$(X). The Kripke model under construction has for its
partial order an increasing $\omega$-sequence. For node n, start with X$_{0}$ =
$L_{\alpha_{n}} \cup \alpha_{n+1}$, and let X$_{m+1}$ = Close(X$_{m}$). The
n$^{th}$ node is $\bigcup_{m}$X$_{m}$ (with the inclusion function as the
transition functions). IKP - $\Delta_{0}$ Bounding is easily seen to hold, as is
$\Pi$ Persistence, by the same argument as in the second model above. But
$\alpha_{n}$ is an ordinal at every node n and L$_{\alpha_{n}}$ isn't a set
until node n+1. (To see that L$_{\alpha_{n}}$ is not a set at node n, show
inductively on m that for all finite Y $\subseteq$ X$_{m}$
L$_{\alpha_{n}} \not\subseteq$ TC(Y).)

It is an interesting question whether IKP + $\Pi$ Persistence -
$\Delta_{0}$ Bounding proves the $\Pi_{1}$ definability of greatest fixed
points of positive inductive $\Pi$ definitions, or whether the full power
of IKP + $\Pi$ Persistence is needed. The status of $\Pi$ Persistence will
be further
pursued in the questions at the end.

\section{Classical Friends}
In the following section we will turn to the intuitionistic versions of various
theories related
to KP. Since these classical theories are not all well known, though, a brief
discussion of
them for their own sakes is in order.

The axioms of interest to us are:
\begin{enumerate}
\item	$\Sigma_{1}$ Dependent Choice: If $\phi(x, y)$ is $\Sigma_{1}$, and
$\forall x \;
\exists y \; \phi(x, y)$, then there is a function f with domain $\omega$ such
that $\forall n \;
\phi$(f(n), f(n+1)).
\item	Resolvability: There is a $\Delta_{1}$ definable function f on the
ordinals such
that V = $\bigcup$range(f).
\item	$\Pi_{2}$ Reflection: If $\forall x \; \exists y \; \phi(x, y), \;
\phi \; \in \;
\Delta_{0}$, then
for every A there is a B $\supseteq$ A such that $\forall x \in B \; \exists y
\in B \; \phi(x, y)$.
\item	$\Delta_{0}$ Bounding: from KP.
\end{enumerate}

The implicational relations among these theories are as follows:
%$$
\begin{tabbing}
$\;\;\;\;\;\;\;\;\;\Sigma_{1}$ DC + KP \= \\
      					\> $\searrow$ \= \\
           \>            \>$\Pi_{2}$  Reflection + KP  $\rightarrow$  KP \\
           \> $\nearrow$ \\
Resolvability + KP
\end{tabbing}
%$$
Over the base theory KP - $\Delta_{0}$ Bounding,
Resolvability alone implies nothing of interest; after all, L$_{\lambda}$ for
any arbitrary limit
$\lambda$ is resolvable. Similarly, $\Sigma_{1}$ DC alone is rather weak, as
L$_{\omega_{1} \cdot 2}$ models $\Sigma_{1}$ DC (because cf($\omega_{1}
\cdot 2) > \omega$). However, over this base theory
$\Pi_{2}$ Reflection itself picks up $\Delta_{0}$ Bounding.

All of these implications are easy to prove; for details see \cite{barwise}.
Furthermore, in L
they are all equivalent; this is also easy, and can be found in \cite{barwise}.
However, if
an implication does not follow from the preceding then it is not in general
true. Some of
these non-implications are easy to see, and must certainly already be known if
not written
down; others are actually a bit tricky. In any event these proofs are not so
easy to find in
the literature, and they also provide the models for the proofs in the next
section, so it is
well to include them here. (For background on forcing, as well as arguments similar 
to those that follow, including the use of symmetric submodels to falsify the Axiom of
Choice, the reader is referred to \cite{JechMF} and \cite{JechST}.)

1. Resolvability + KP + $\neg\Sigma_{1}$ DC: Adjoin to L$_{\omega_{1}^{CK}} \;$
(or, for that matter, to L $\models$ ZF, for the stronger result Resolvability + ZF +
$\neg\Sigma_{1}$ DC) $\omega$-many Cohen reals (that is, reals generic for the forcing
partial order 2$^{< \omega}$), as well as the set (not 
sequence!) G consisting of all those reals. This is a
symmetric submodel of the model obtained by the set forcing to add an
$\omega$-sequence of
Cohen reals, hence is admissible. It is already a standard argument (see \cite{JechST})
that not even all of ZF is
enough to build
an $\omega$-sequence of distinct elements of G. This violation of the Axiom of Choice
is actually a violation of $\Sigma_{1}$ DC: let $\phi(x,y)$ be ``if $x$ is a sequence of
distinct elements of G then $y$ is one also and is a proper end-extension of $x$".
The function which shows Resolvability is f($\alpha$) =
L$_{\alpha}$[G].

2. $\Sigma_{1}$ DC + KP + $\neg$Resolvability: Adjoin to L$_{\omega_{1}^{CK}}$
any finite
subset G$_{fin}$ of a countable set G of Cohen reals, and take the union of all such
adjunctions: {\bf M} = $\bigcup_{G_{fin} \subseteq G, \; G_{fin} finite}$ 
L$_{\omega_{1}^{CK}}$[G$_{fin}$].
$\Sigma_{1}$ DC and $\Delta_{0}$ Bounding hold, as follows. It suffices to consider
formulas of the form $\exists y \phi(x,y)$, with $\phi(x,y)$ a $\Delta_{0}$ formula.
If $\exists y \phi(x,y)$ holds for a particular set $x$, then let G$_{fin} \subseteq$ G
be the finite set of those reals used in the construction of $x$ and of $\phi$'s
parameters (i.e. $\exists y \phi(x,y)$ is a formula over L$_{\omega_{1}^{CK}}$[G$_{fin}$]).
We claim there is a witness $y$ not merely in {\bf M}, as hypothesized,
but already in L$_{\omega_{1}^{CK}}$[G$_{fin}$]. To see this, let $y$ witness
$\exists y \phi(x,y)$ in {\bf M}. Let G$_{y} \supseteq$ G$_{fin}$ be a finite set 
such that $y \in$ L$_{\omega_{1}^{CK}}$[G$_{y}$]. By the absoluteness of $\Delta_{0}$
formulas, L$_{\omega_{1}^{CK}}$[G$_{y}$] $\models \; \phi(x,y)$; again by absoluteness,
L$_{\alpha}$[G$_{y}$] $\models \; \phi(x,y)$ for some $\alpha < \omega_{1}^{CK}$.
G$_{y} \setminus$ G$_{fin}$ is generic over L$_{\omega_{1}^{CK}}$[G$_{fin}$] (for the 
forcing p.o. which is the product of finitely many copies of 2$^{< \omega}$), hence the last 
assertion is forced over L$_{\omega_{1}^{CK}}$[G$_{fin}$] by some condition p: 
p $\mid \vdash$ ``L$_{\alpha}$[G$_{y}$] 
$\models \; \phi(x,\mathring y)$", where $\mathring y$ is a name for $y$. 
In L$_{\omega_{1}^{CK}}$[G$_{fin}$] a generic over
L$_{\alpha}$[G$_{fin}$] through p can be built (by the countability of L$_{\alpha}$[G$_{fin}$] 
in L$_{\omega_{1}^{CK}}$[G$_{fin}$]); let G$_{\alpha}$ be such a generic. Then
L$_{\alpha}$[G$_{fin}$, G$_{\alpha}$] $\models \; \phi(x,\mathring y$(G$_{\alpha}))$, 
where ``$\mathring y$(G$_{\alpha}$)" refers to the interpretation of $\mathring y$ with the 
canonical name for the generic interpreted as G$_{\alpha}$. By the absoluteness of 
$\Delta_{0}$ formulas L$_{\omega_{1}^{CK}}$[G$_{fin}$] 
$\models \; \phi(x,\mathring y$(G$_{\alpha}$)), and L$_{\omega_{1}^{CK}}$[G$_{fin}$] 
$\models \; \exists y \; \phi(x,y)$. Now it's easy to see that $\Sigma_{1}$ DC holds in 
{\bf M}. If {\bf M} $\models \; ``\forall x \exists y \phi(x,y)", \phi \in \Delta_{0}$, then
L$_{\omega_{1}^{CK}}$[G$_{fin}$] models the same. L$_{\omega_{1}^{CK}}$[G$_{fin}$] also satisfies
$\Sigma_{1}$ DC, so contains an appropriate $\omega$-sequence, which by absoluteness also
works in {\bf M}. $\Delta_{0}$ Bounding is similar.

Resolvability fails, as follows. Let f be a function over {\bf M} (given via a $\Delta_{1}$
definition)
with domain ORD$^{\bf M}$. Let G$_{fin}$ be a finite subset of G such that all of f's 
parameters are in L$_{\omega_{1}^{CK}}$[G$_{fin}$]. By the same argument as in the last 
paragraph, since f is total over {\bf M}, the same $\Delta_{1}$ definition produces the same
total function over L$_{\omega_{1}^{CK}}$[G$_{fin}$]. So $\bigcup$ range(f) $\subseteq$
L$_{\omega_{1}^{CK}}$[G$_{fin}$], which itself is a proper subset of {\bf M}, so
$\bigcup$ range(f) $\not =$ {\bf M}.

3. $\Pi_{2}$-Reflection + $\neg \Sigma_{1}$DC + $\neg$Resolvability: This is a
combination of the previous two arguments. To construct {\bf M}, first adjoin 
an infinite set G of
Cohen reals, then all possible finite subsets of another such infinite set H.
$\Sigma_{1}$ DC and Resolvability fail as before. $\Pi_{2}$ Reflection holds for 
much the same reason that
$\Sigma_{1}$ DC did in the previous example. If {\bf M} $\models \exists y \phi(x, y)$ then all
you need to
build such a y is G and the finite subset H$_{fin}$ of H used for x and $\phi$. 
So if {\bf M} $\models ``\forall x \exists y \; \phi(x,y)", \; \phi \in \Delta_{0}$,
and if A $\in$ {\bf M},
then L$_{\omega_{1}^{CK}}$[G, H$_{fin}$] $\models \; ``\forall x \exists y \; \phi(x,y)"$
(H$_{fin}$ merely being large enough to contain all of the reals in H needed to
construct $\phi$'s parameters and A). Since L$_{\omega_{1}^{CK}}$[G, H$_{fin}$] satisfies
$\Pi_{2}$ Reflection, it contains an appropriate B $\supseteq$ A, as desired.

4. KP + $\neg \Pi_{2}$-Reflection: The model from \cite{metoo} suffices, and
will be
described briefly here. A Steel generic tree over L$_{\omega_{1}^{CK}}$ is not
well-founded but has no infinite descending sequence in the generic extension.
This
forcing can be modified slightly to include a distinguished path through the
tree which
does not destroy admissibility. Force an $\omega$-sequence of such trees
$<T_{n}>$ and
paths $<B_{n}>$. Let W be a recursive linear ordering of $\omega$ which is not
well-founded but has no hyperarithmetic infinite descending sequence. Think of
the trees
as ordered by W. Let {\bf M}$_{i}$ be L$_{\omega_{1}^{CK}}$[$<T_{n} \mid n \in
\omega>,
<B_{n} \mid n \geq_{W} i>$]. {\bf M} = $\bigcup_{i \in I}$ {\bf M}$_{i}$ for I
some final
segment of W with no least element, chosen exactly so that {\bf M} is
admissible. In
\cite{metoo} it was observed that {\bf M} falsifies $\Sigma_{1}$ DC; in fact
it's even
$\Pi_{2}$ Reflection that fails: {\bf M} $\models ``\forall j \;$  if there
is a
branch through
T$_{j}$ then $\exists i<_{W}j$ and a branch through T$_{i}$'', but any set in
{\bf M} is a set
in some {\bf M}$_{i}$, in which W is still well-founded.

\section{Intuitionistic Models}

As remarked in the introduction, even though the constructions just given show
that none
of these theories are equivalent to each other in general, in L they all are.
However,
intuitionistically the ordinals do funny things. With Kripke models there is an
extra degree
of freedom, the choice of the partial order. If the underlying partial order is
chosen to be
some forcing partial order, then by coding forcing conditions as small ordinals one
could
reasonably hope to get a generic G, although still not in V, actually in
L$^{V[G]}$. This is
basically the approach followed in the upcoming arguments. Notice, though, that
these
intuitions are not made precise. In particular, of the four constructions from
the preceding
section, only three are adapted here, because we couldn't see how to do the
fourth. The
omitted one is $\Sigma_{1}$ DC + KP + V=L + $\neg$Resolvability, the problem
being
$\Sigma_{1}$ DC. There's no trouble picking some least set in classical L, but
IL does not
apparently come equipped with a well-founded linear order.

\subsection{General Considerations} \label{general}

There will be some common constructions in these models. It would be well to
isolate
them, as well as to give some of the standard background on IORD and IL.

\subsubsection{Intuitionistic Ordinals}

Intuitionistically, an ordinal is a transitive set of transitive sets. The
standard weak counter-example to show that the ordinals are not linearly
ordered goes as follows. Let $\alpha = \{0 \mid \phi\}$. $\alpha$ is an
ordinal. If the
ordinals are linearly ordered, then $\alpha \in 0, \alpha = 0$, or $0 \in
\alpha$.
The first case is not possible, since 0 is the empty set. In the second case,
$\neg \phi$. In the last case, $\phi$. So $\phi \vee \neg \phi$. Hence if
Excluded Middle is to fail, then the ordinals are not linearly ordered. It is
not hard to give examples of
non-classical ordinals in Kripke models. Let {\it P} be any partial order. A
trivial Kripke structure on {\it P}
has some universe of sets M (with M's $\in$-relation) at each node and the identity 
function as the
transition
functions. (Think of M as a model of ZF, or a weaker set theory, or even the universe
V for a class-sized Kripke structure. A class-sized Kripke structure is like a Kripke
structure, but rather than requiring that the relations and functions in question (e.g.
``is a set at node $\sigma$", or the transition functions between nodes) be sets, they
must merely be definable relations on a definable subset of V.)
Consider a larger or ambient Kripke structure, that is, a structure with the same partial
order {\it P} but more sets at each node. Any set in this ambient structure which at each 
node contains only ordinals
from the trivial Kripke structure is an intuitionistic ordinal which cannot be
construed as
classical.

\subsubsection{Intuitionistic Definability}
\label{section-int-def}

Regarding IL, definability can be developed within IZF much as it is
classically, so def(X), the set of definable subsets of a set X, makes
perfect sense. As usual, the definition of
L$_{\alpha}$ cannot
depend on whether $\alpha$ is a limit or a successor, since intuitionistically
not every
ordinal is a limit or a successor. So L$_{\alpha}$ is $\bigcup_{\beta \in
\alpha}$
def(L$_{\beta}$). For details see \cite{me}. In what follows, however, we will
typically not be dealing with L as an inner model of an already given model of
IZF, which
is merely the semantic version of L's axiomatic development in IZF, hence
implicit
in \cite{me}. Rather, L will be handled {\it sui generis}: given a Kripke
structure {\bf M}
for the language of set theory, we will have occasion to consider def({\bf M}).
While not difficult, this could stand some exposition.

So suppose {\bf M} is an extensional ($x = y$ iff $\forall z \; z \in x
\leftrightarrow z \in y$) Kripke structure (with relations $\in$ and
=),
with underlying partial order {\it P}, universe {\bf M}$_{\sigma}$ at node
$\sigma$,
and transition functions f$_{\sigma \tau}$ : {\bf M}$_{\sigma} \rightarrow$
{\bf M}$_{\tau}$. In def({\bf M}), a set at node $\sigma$ is named by a formula
$\phi$(x) with free variable x and parameters from {\bf M}. A set is actually an
equivalence class of such formulas, in which $\phi \sim_{\sigma} \psi$ if
$\sigma \models_{\bf M} \forall x \; [\phi(x) \leftrightarrow \psi(x)]$.
f$_{\sigma \tau}$ : def({\bf M}$_{\sigma}$) $\rightarrow$
def({\bf M}$_{\tau}$) is given by the equation

$$f_{\sigma \tau}([\phi(x, y_{1}, ... , y_{n})]_{\sigma}) =
[\phi(x, f_{\sigma \tau}(y_{1}), ... , f_{\sigma \tau}(y_{n}))]_{\tau},
$$
where $y_{1}, ... , y_{n}$ are the parameters, and the notation f$_{\sigma
\tau}$
is used ambiguously for transition functions in both {\bf M} and def({\bf M}).
f$_{\sigma \tau}$ is well-defined, because if $\phi \sim_{\sigma} \psi$, that
is,
if $\sigma \models_{\bf M} \forall x \; [\phi(x, y_{1}, ... , y_{n})
\leftrightarrow \psi(x, z_{1}, ... , z_{m})]$, then, by general lemmas on Kripke
models, $\tau \models_{\bf M} \forall x \; [\phi(x, f_{\sigma \tau}(y_{1}),
... ,
f_{\sigma \tau}(y_{n}))
\leftrightarrow \psi(x, f_{\sigma \tau}(z_{1}), ... , f_{\sigma \tau}(z_{m}))]$
($\sigma \leq \tau$), or $\phi \sim_{\tau} \psi$. {\bf M}$_{\sigma}$ can be
embedded in def({\bf M}$_{\sigma}$), with y being identified with the formula
``x $\in$ y". This much being said, $\in$ can now be interpreted:
[$\phi$]$_{\sigma}
\in$ [$\psi$]$_{\sigma}$ iff [$\phi$]$_{\sigma}$ = [y]$_{\sigma}$ for some y
$\in$ {\bf M}$_{\sigma}$ and $\sigma \models_{\bf M} \psi(y)$. (Notice that this
definition is independent of the choice of representative for
[$\psi$]$_{\sigma}$,
by the definition of the equivalence relation; furthermore, there is at
most one
y such
that y $\sim_{\sigma} \phi$, by the extensionality of {\bf M}.) Extensionality
holds in def({\bf M}): \\
$\sigma \models_{def({\bf M})} \forall x \; (x \in [\phi]_{\sigma}
\leftrightarrow
x \in [\psi]_{\sigma})$ iff \\
$\forall \tau \geq \sigma \; \forall x \in {\bf M}_{\tau}
\tau \models_{def({\bf M})}  [x] \in [\phi]_{\tau} \leftrightarrow
[x] \in [\psi]_{\tau}$ iff \\
$\forall \tau \geq \sigma \; \forall x \in {\bf M}_{\tau}
\tau \models_{\bf M}  \phi(x) \leftrightarrow \psi(x)$ iff \\
$\sigma \models_{\bf M}  \forall x \; [\phi(x) \leftrightarrow \psi(x)]$ iff \\
$[\phi]_{\sigma} = [\psi]_{\sigma}$.

To summarize, if {\bf M} is an extensional Kripke structure for the
language of set theory,
so is def({\bf M}), and {\bf M} can be canonically embedded in def({\bf M}).
Furthermore, def({\bf M}) can be construed as an end-extension of {\bf M}, since
if $\sigma \models_{def({\bf M})} [\phi]_{\sigma} \in [y]_{\sigma}$ then
$[\phi]_{\sigma} = [z]_{\sigma}$ for some z $\in$ {\bf M}$_{\sigma}$ and
$\sigma \models_{\bf M} z \in y$. Also, this process can be iterated through the
transfinite: starting with {\bf M}$_{0}$ = {\bf M}, let {\bf M}$_{\alpha + 1}$
= def({\bf M}$_{\alpha}$) and {\bf M}$_{\lambda}$ =
$\stackrel{lim}{\rightarrow}$
\{{\bf M}$_{\alpha} \mid \alpha < \lambda$\} (or, more colloquially, $\bigcup
_{\alpha < \lambda} {\bf M}_{\alpha}$). {\bf M}$_{\lambda}$ is also an
extensional
Kripke model end-extending each {\bf M}$_{\alpha}$.

Indeed, this process can be iterated not only along a well-founded linear
order,
but also along a well-founded partial order. Suppose that X is a Kripke
structure (for the language of set theory) with underlying partial order {\it
P}, and that the partial order defined by

$$(\sigma, x) \geq (\tau, y) \leftrightarrow \sigma \leq_{\it P} \tau
\wedge \tau \models y \in f_{\sigma \tau}(x)
$$
$(\sigma, \tau \in {\it P}, x \in X_{\sigma}, y \in X_{\tau})$ is well-founded.
Then ``def" can be iterated along this p.o., as follows. Suppose
inductively that
for all $(\tau, y) \leq (\sigma, x)$ def({\bf M}$_{ (\tau, y)}$) is
well-defined as a Kripke structure on the partial order {\it P}$_{\geq
\tau}$. Then {\bf M}$_{(\sigma, x)}$ can be defined as follows:

$$({\bf M}_{(\sigma, x)})_{\tau} = \bigcup_{\{y \mid \tau \models y \in
f_{\sigma \tau}(x)\}} (def({\bf M}_{(\tau, y)}))_{\tau}.
$$
Regarding the transition functions, consider $[\phi(z, y_{1}, ... ,
y_{n})]_{\tau} \in ({\bf M}_{(\sigma, x)})_{\tau}$. Then

$$[\phi(z, y_{1}, ... , y_{n})]_{\tau} \in (def(({\bf M}_{(\tau, y)}))_{\tau}
$$
where $\tau \models y \in f_{\sigma \tau}(x)$. Then let

$$f_{\tau \rho}([\phi(z, y_{1}, ... , y_{n})]_{\tau}) :=
[\phi(z, f_{\tau \rho}(y_{1}), ... , f_{\tau \rho}(y_{n}))]_{\rho},
$$
since
$$[\phi(z)]_{\rho} \in (def({\bf M}_{(\rho, f_{\tau \rho}(y))}))_{\rho}
\subseteq ({\bf M}_{(\sigma, x)})_{\rho},
$$
the latter inclusion because $\rho \models f_{\tau \rho}(y) \in f_{\tau
\rho}(f_{\sigma \tau}(x)) = f_{\sigma \rho}(x)$.

With {\bf M}$_{(\sigma, x)}$ now defined, def({\bf M}$_{(\sigma, x)}$)
follows, using the ``def" operator developed earlier in this section. The
final structure {\bf M}$_{X}$ is then given by
$$({\bf M}_{X})_{\sigma} = \bigcup_{x \in X_{\sigma}} (def({\bf
M}_{(\sigma, x)}))_{\sigma}.
$$

At this point a few words about absoluteness are in order. For starters,
the external concept ``def" developed above is equivalent to the internal
concept ``def". In more detail, suppose that {\bf V} is some
classical meta-universe, and {\bf M}$\in${\bf V} is an extensional Kripke structure for the
language of set theory, and {\bf N} = def({\bf M}), again in {\bf V}.
Suppose that {\bf K} is a Kripke structure with the same underlying partial
order as {\bf M} and {\bf N} such that {\bf M}, {\bf N} $\in$ {\bf K} (so
{\bf M} and {\bf N} are sets in the sense of {\bf K}). Then {\bf K}
$\models$ {\bf N} = def({\bf M}), where of course ``def" in this case is
the traditional, internal concept of definability. We will not prove this
here; hopefully the external development of ``def" was transparent enough
to make this assertion clear. This allows us in the following to be
ambiguous as to which version of ``def" is intended.

Furthermore, the same holds for the iterations of ``def", both into the
transfinite as well as along a well-founded partial order. In the first
case, suppose $\alpha$ is an ordinal and {\bf K} a Kripke structure for the
language of set theory (both in our classical meta-universe {\bf V}, of
course). Suppose that there is an $\alpha_{\bf K} \in {\bf K}_{\sigma},
\sigma \in$ {\it P}, such that $(\alpha, \in_{\bf V})$ and $(\alpha_{\bf
K}, \in_{\bf K_{\sigma}})$ are isomorphic, and that, for all $\tau \geq
\sigma$, f$_{\sigma\tau}$ : $\alpha_{\bf K} \rightarrow$
f$_{\sigma\tau}(\alpha_{\bf K})$ is an isomorphism. Then $\sigma \models
``\alpha_{\bf K}$ is an ordinal", and $\alpha_{\bf K}$ can be identified
with $\alpha$. Notice that this is nothing other than the extension of the
already common practice of using the notation ``0" to denote the empty set
both in {\bf V} and in a Kripke model, ``1" as \{0\} again in {\bf V} or
in a Kripke model, etc., and even ``$\omega$" for the $\omega$ of a Kripke 
model, so long as it contains only the internalizations of the standard
natural numbers. In addition, for {\bf M}, {\bf N} $\in$ {\bf
K$_{\sigma}$}, if {\bf N} = {\bf M}$_{\alpha}$ (the $\alpha$-fold iteration
of def, as defined above, of course interpreted in {\bf V}), then $\sigma
\models$ {\bf N} = {\bf M}$_{\alpha_{\bf K}}$. (This applies, of course,
only when {\bf K} has enough set-theoretic power to express $\alpha$-fold 
iteration of definability, e.g. in models of IKP (see \cite{me}).) In such a 
case we will feel
free to use the notation $\alpha$ for both the external and internal
ordinal.

Regarding iterations along well-founded partial orders, let X be as above
(that is, a Kripke structure over {\it P} well-founded in the manner previously
described). Let {\bf N} = {\bf M}$_{X}$, of course in {\bf V}, and let {\bf K}
be a Kripke structure containing X, {\bf M}, and {\bf N} as members. If {\bf K}
satisfies a sufficiently large fragment of IZF then the X-fold iteration of def 
(notation: X-def) would be definable and provably total. The inductive definition of ``X-def" 
is none other
than the already well-known notion of the iteration of definability along an ordinal
(i.e. {\bf L}$_{\alpha}$),
the point being that $\alpha$'s ordinalhood is unnecessary for the intelligibility of
the notion: X-def({\bf M}) = $\bigcup_{Y \in X}$ def(Y-def({\bf M})). The result, again
stated without proof, is that {\bf K} $\models$ {\bf N} = X-def({\bf M}).

A case of particular interest is when {\bf K} $\models$ ``X is an ordinal". Under this
circumstance we will use the more common notation $\alpha$ instead of X. 
For {\bf M} take the structure {\bf 0} = $\emptyset^{\bf K}$ (equivalently, let 
{\bf 0} be the unique function from {\it P} to $\{ \emptyset \}$). Then there is
already standard notation for $\alpha$-def({\bf 0}), namely {\bf L}$_{\alpha}$.
This being the case, we will use the
notation ``{\bf L}$_{\alpha}$" for {\bf 0}$_{\alpha}$ for those $\alpha$'s as
above which are ordinals in some Kripke structure, even if such Kripke structure
has not yet been introduced in the exposition. Then ``{\bf
L}$_{\alpha}$" can be interpreted either externally or internally, as the
occasion demands. (To keep the reader from being distracted by concerns that
any given $\alpha$ will indeed turn out to be an ordinal in some ambient Kripke
structure introduced in some unspecified future section, please observe that
ordinality is a $\Delta_{0}$
property for transitive sets, the only kind we will be considering, and
hence can be determined locally. We will even go so far as to say that a
transitive Kripke structure $\alpha$ {\it is} an ordinal (or a Kripke ordinal)
when it satisfies
the intuitionistic definition of such, a transitive set of transitive sets,
even in the absence of an ambient Kripke universe, knowing that such an $\alpha$
will be an ordinal in any ambient Kripke structure.)

\subsubsection{General Constructions}

In the following sections we will be constructing several Kripke models.
Here we
bring together several steps common to them, in order not to have to do them
more than once. In every case we will begin by giving the underlying partial
order {\it P}; the steps given here are generic and applicable to all {\it
P}'s,
assuming only that {\it P} has a bottom element $\bot$. (After corollary \ref{subsets}, we will
impose more restrictions on {\it P}. Of course for the constructions themselves
in future sections, we will specify {\it P} explicitly.) Then we will define certain
Kripke ordinals with underlying partial order {\it P}. These ordinals will be based 
on the generic reals from the classical forcing constructions of the last section;
they will be a kind of internalization of these reals, and will be very low in the power
set hierarchy (being collections of subsets of 1 = \{0\}). It should be noted that
the constructions here and in the coming sections take place of necessity in
{\bf
V}: the only Kripke structures in sight are those being built, and so of no use
in their own definitions.

Given a node $\sigma$ in {\it P}, let $1_{\sigma}$ be the set (Kripke set, if you
will, as $1_{\sigma}$ will end up being a set in a Kripke structure) that looks
like 0 unless
you're past or incompatible with $\sigma$, where it's 1:

$$[\tau \models x \in 1_{\sigma}] \leftrightarrow [\sigma < \tau \vee \sigma
\perp \tau]
\wedge [\tau \models x = \emptyset],
$$
or, if you prefer,
$$(1_{\sigma})_{\tau} = \emptyset \leftrightarrow \tau \leq \sigma
$$
$$(1_{\sigma})_{\tau} = \{\emptyset\} \leftrightarrow [\sigma < \tau \vee
\sigma \perp \tau],
$$
with the only possible transition functions (cf. the remark after definition 
\ref{branches}). $1_{\sigma}$ is an ordinal:
$\bot \models$ ``if x $\in 1_{\sigma}$ then x =
$\emptyset$'';
since nothing is in $\emptyset$, $1_{\sigma}$ is transitive, and $\emptyset$ is
transitive. (Recall that the intuitionistic definition of an ordinal as a
transitive set of transitive sets is $\Delta_{0}$ for transitive sets, and
so can be determined locally, without an ambient Kripke structure.) So
L$_{1_{\sigma}}$ is well-defined. What is L$_{1_{\sigma}}$?
If $\tau \models x \in L_{1_{\sigma}}$, then, for some $\beta, \tau \models x \in def(L_{\beta})
\wedge \beta \in 1_{\sigma}$. If $\tau \leq \sigma$ then $\tau \not\models \beta
\in 1_{\sigma}$, so $\tau \not\models x \in L_{1_{\sigma}}$ for any x, and
L$_{1_{\sigma}}$ looks empty. Otherwise $\tau \models \beta \in 1_{\sigma}$ iff
$\tau \models \beta = \emptyset$. $\bot \models L_{0} = \emptyset$, and $\bot
\models def(L_{0}) = \{\emptyset\}$. So if $\tau \models x \in L_{1_{\sigma}}$
then $\tau \models x = \emptyset$. In short, L$_{1_{\sigma}}$ = $1_{\sigma}$.

Furthermore, what's definable over 1$_{\sigma}$? Recall that when taking
definitions over a set, truth is evaluated in that set: a definable subset
of 1$_{\sigma}$ is one of the form \{ x $\in 1_{\sigma} \mid 1_{\sigma}
\models \phi$(x) \}, where $\phi$'s parameters must also come from
1$_{\sigma}$. So if both $\tau_{0}, \tau_{1}
\leq \sigma$ then $\tau_{0} \models ``1_{\sigma} \models \phi(x)"$ iff
$\tau_{1} \models ``1_{\sigma} \models \phi(x)"$; i.e. $\tau_{0}$ and $\tau_{1}$
force the same atomic facts about definable subsets of 1$_{\sigma}$. Similarly
if both $\tau_{0}$ and $\tau_{1}$ extend or are incompatible with $\sigma$.
Since 1$_{\sigma}$'s only possible member is $\emptyset$, the only possible
member of a
subset of 1$_{\sigma}$ is $\emptyset$; by the preceding remarks, either
$\emptyset$ is in a given subset at all nodes beyond or incompatible with
$\sigma$ or it's not.
So def(L$_{1_{\sigma}}$) = \{0, $1_{\sigma}$\}.

Let T be \{$1_{\sigma} \mid \sigma \in {\it P} $\} (in {\bf V}). Let
$\hat{T}$ be a Kripke subset of T,
that is, $\hat{T}_{\sigma} \subseteq$ T and the transition functions are
all the identity. Let $\hat{T}_{0}$ be such that $(\hat{T}_{0})_{\sigma} =
\hat{T}_{\sigma} \; \cup$ \{0\}. $\hat{T}_{0}$ is
an ordinal: T is a set of ordinals, so $\hat{T} \subseteq$ T is also, as is
$\hat{T}_{0}$. So $\hat{T}_{0}$ is a set of transitive sets. It is also
transitive
itself: if $\tau \models x \in y \in \hat{T}_{0}$, then either $\tau \models$ y
= 0,
which contradicts $\tau \models x \in y$, or $\tau \models y \in \hat{T}$, i.e.
$\tau \models y = 1_{\sigma}$ for some $\sigma$, and $\tau \models$ x = 0, and 0
$\in \hat{T}_{0}$.

\begin{lemma}

L$_{\hat{T}_{0}}$ = $\hat{T}_{0}$ \label{T-hat-0}
\end{lemma}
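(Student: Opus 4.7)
The plan is to unfold the definition $L_{\hat{T}_0} = \bigcup_{\beta \in \hat{T}_0} \text{def}(L_\beta)$ and verify the desired equality by a node-by-node check, exploiting the three facts already established just above the lemma: $L_0 = 0$ (so $\text{def}(L_0) = \{0\}$), $L_{1_\sigma} = 1_\sigma$, and $\text{def}(L_{1_\sigma}) = \{0, 1_\sigma\}$. The whole computation is then essentially bookkeeping.

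First, I would fix a node $\tau$ and describe the members of $\hat{T}_0$ there. By construction $(\hat{T}_0)_\tau = \hat{T}_\tau \cup \{0\}$, so $\tau \models \beta \in \hat{T}_0$ holds exactly when $\beta = 0$ or $\beta = 1_\sigma$ for some $\sigma$ with $1_\sigma \in \hat{T}_\tau$. For each such $\beta$, the value of $\text{def}(L_\beta)$ is known: in the $\beta = 0$ case it is $\{0\}$, and in the $\beta = 1_\sigma$ case it is $\{0, 1_\sigma\}$. Taking the union over all $\beta \in (\hat{T}_0)_\tau$ therefore yields $\{0\} \cup \hat{T}_\tau = (\hat{T}_0)_\tau$, which is exactly what we want.

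To make this into a genuine Kripke equality, I would verify the two directions of $\tau \models x \in L_{\hat{T}_0} \leftrightarrow \tau \models x \in \hat{T}_0$ separately. For $(\rightarrow)$, if $\tau \models x \in L_{\hat{T}_0}$, then there is some $\beta$ with $\tau \models \beta \in \hat{T}_0$ and $\tau \models x \in \text{def}(L_\beta)$; by the previous paragraph, $\tau \models x = 0$ or $\tau \models x = 1_\sigma$ for some $1_\sigma \in \hat{T}_\tau$, and in either case $\tau \models x \in \hat{T}_0$. For $(\leftarrow)$, if $\tau \models x \in \hat{T}_0$, then $\tau \models x = 0$ or $\tau \models x = 1_\sigma$ for some $1_\sigma \in \hat{T}_\tau$; in the first case use $\beta = 0$ (which lies in $\hat{T}_0$ everywhere), and in the second use $\beta = 1_\sigma$, noting that $x \in \text{def}(L_\beta)$ in each case by the precomputed values.

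The main thing to be careful about is the distinction between the external and internal readings of $L$ along a non-classical ordinal; this is what Section~\ref{section-int-def} was put in place to handle, so I would appeal to that framework rather than re-deriving it. The only mild subtlety I anticipate is ensuring that the monotonicity $\tau \leq \tau'$ behaves properly when $1_\sigma$ enters $\hat{T}$ between $\tau$ and $\tau'$, but since $\hat{T}$ is a Kripke subset of $T$ with identity transitions and the hierarchy $L_\beta$ is end-extending along the partial order, no genuine obstacle arises: once $1_\sigma$ appears in $\hat{T}$, both sides of the asserted equality gain exactly the element $1_\sigma$ and nothing else.
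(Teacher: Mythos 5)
Your proposal is correct and follows essentially the same route as the paper: unfold $L_{\hat{T}_0} = \bigcup_{\beta \in \hat{T}_0}\mathrm{def}(L_\beta)$, split off the $\beta = 0$ case, and plug in the precomputed values $\mathrm{def}(L_0) = \{0\}$ and $\mathrm{def}(L_{1_\sigma}) = \{0, 1_\sigma\}$ to recover $\hat{T}_0$. The paper just writes this as a one-line chain of equalities rather than a node-by-node verification of both directions.
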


\begin{proof}
%\begin{eqnarray*}
L$_{\hat{T}_{0}} $ = $ \bigcup_{\alpha \in \hat{T}_{0}} def(L_{\alpha})\\
$ = $[\bigcup_{\alpha \in \hat{T}} def(L_{\alpha})] \cup def(L_{0}) \\
$ = $[\bigcup_{1_{\sigma} \in \hat{T}} def(L_{1_{\sigma}})] \cup \{0\} \\
$ = $ \bigcup_{1_{\sigma} \in \hat{T}} \{0, 1_{\sigma}\} \cup \{0\} \\
$ = $ \hat{T}_{0} $
%\end{eqnarray*}
\end{proof}

\begin{corollary} $\hat{T}$ is definable over L$_{\hat{T}_{0}}$.
\end{corollary}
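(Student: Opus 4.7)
My plan is to exhibit $\hat{T}$ as the subset of $L_{\hat{T}_0}$ carved out by separation with the formula $\phi(x) := \neg(x = 0)$, taking $0$ as a parameter. By Lemma~\ref{T-hat-0} we have $L_{\hat{T}_0} = \hat{T}_0 = \hat{T} \cup \{0\}$, and $0$ is manifestly an element of $L_{\hat{T}_0}$, so the formula is legitimate. What remains is to check, node by node, that this separated subset omits $0$ and contains every $1_\sigma$ sitting in $\hat{T}_\tau$.

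Omission of $0$ is immediate, since reflexivity of equality gives $\tau \models 0 = 0$ at every node. For inclusion, fix $\tau$ and $1_\sigma \in \hat{T}_\tau$ and unwind the Kripke clauses for $\neg$ and $=$. Then $\tau \models \neg(1_\sigma = 0)$ fails only if some $\tau' \geq \tau$ has every $\tau'' \geq \tau'$ satisfying $(1_\sigma)_{\tau''} = \emptyset$, which by the very construction of $1_\sigma$ means exactly that the full upward cone of $\tau'$ in $P$ is bounded above (weakly) by $\sigma$.

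The main (and essentially only) obstacle is ruling out this degenerate configuration. It requires a mild non-triviality assumption on $P$---namely that no node has its entire upward cone dominated by a single other node---which is automatic for the product Cohen and Steel-tree partial orders specified in the coming sections, and which presumably is among the ``more restrictions on $P$'' the author promises to impose after Corollary~\ref{subsets}. Once this condition is in hand, the separated set reads off as exactly $\hat{T}_\tau$ at every $\tau$, so $\hat{T} = \{x \in L_{\hat{T}_0} : \neg(x = 0)\}$ and the corollary follows.
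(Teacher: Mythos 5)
Your proof is correct and is essentially the paper's own: the paper likewise defines $\hat{T}$ over $L_{\hat{T}_0} = \hat{T}_0$ by the formula $x \neq 0$, justified only by the one-line assertion that $\bot \models 1_{\sigma} \neq 0$ for every $\sigma$. Your additional observation that this assertion silently requires $P$ to have no node whose entire upward cone lies below some $\sigma$ (equivalently, no terminal nodes) is accurate and is a point the paper glosses over; the condition does hold for every partial order actually used in the later constructions.
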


\begin{proof}
$\hat{T}$ = $\hat{T}_{0} - \{0\}$ = \{x $\mid$ x $\neq$ 0\} as a definition
over
$\hat{T}_{0}$,
since, for all $\sigma, \bot \models 1_{\sigma} \neq 0$.
\end{proof}

\begin{corollary}

Suppose \{$\hat{T}_{i} \mid i \in$ I\} is a collection of Kripke subsets of T. Then
\{$\hat{T}_{i0} \mid
i \in$ I\} $\cup \bigcup$ \{$\hat{T}_{i0} \mid i \in$ I\} is an ordinal, say
$\xi$, and $\forall i \;
\hat{T}_{i} \in L_{\xi}$. \label{subsets}
\end{corollary}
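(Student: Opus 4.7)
The plan is to use the two preceding results ``slice by slice,'' invoking them separately for each index $i$ and then amalgamating the pieces into a single ordinal. Set $S := \{\hat{T}_{i0} \mid i \in I\} \cup \bigcup \{\hat{T}_{i0} \mid i \in I\}$. I will first check that $S$ is an ordinal, and then that each $\hat{T}_i$ is a member of $L_S$.

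For ordinalhood, I first note that every element of $S$ is transitive: the elements of the first piece are the $\hat{T}_{i0}$'s, already seen to be transitive in the paragraph just before Lemma \ref{T-hat-0}; the elements of the second piece are, at any node $\tau$, either $0$ or some $1_\sigma$, both of which are transitive (the only possible member of $1_\sigma$ is $0$). For the transitivity of $S$ itself, suppose $\tau \models x \in y \wedge y \in S$. If $y = \hat{T}_{i0}$ for some $i$, then $x \in \hat{T}_{i0}$ already puts $x$ into the unioned piece of $S$. If instead $y$ comes from the second piece, then $y$ is $0$ (vacuously fine) or $y = 1_\sigma$, in which case $x = 0$, which belongs to every $\hat{T}_{i0}$ and hence to $S$ (assuming $I$ is inhabited; if $I = \emptyset$ then $S = 0$, a trivial ordinal).

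For membership in $L_S$, recall that $L_S = \bigcup_{\alpha \in S} \textrm{def}(L_\alpha)$. Since $\hat{T}_{i0} \in S$, Lemma \ref{T-hat-0} gives $L_{\hat{T}_{i0}} = \hat{T}_{i0}$, so $\textrm{def}(L_{\hat{T}_{i0}}) \subseteq L_S$. The preceding corollary (applied to $\hat{T}_i$ in place of $\hat{T}$) exhibits $\hat{T}_i$ as the definable subset $\{x \in \hat{T}_{i0} \mid x \neq 0\}$ of $\hat{T}_{i0} = L_{\hat{T}_{i0}}$, so $\hat{T}_i \in \textrm{def}(L_{\hat{T}_{i0}}) \subseteq L_S$.

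The only point deserving genuine care is the intuitionistic bookkeeping in the transitivity argument: the case split on the form of $y$ must be carried out locally, node by node, rather than globally. Because the relevant memberships ($y \in \hat{T}_{i0}$, $y \in T$, $y = 1_\sigma$) are $\Delta_0$ and therefore decidable at each individual node, this causes no difficulty, and the classical case reasoning survives under Kripke semantics.
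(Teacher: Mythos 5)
Your proof is correct and follows exactly the route the paper intends: the paper states this as an immediate corollary of Lemma \ref{T-hat-0} and the preceding corollary and omits the proof entirely, and your argument (transitivity of the elements and of $S$ itself checked externally in {\bf V}, then $\hat{T}_{i} = \{x \in \hat{T}_{i0} \mid x \neq 0\} \in \mathrm{def}(L_{\hat{T}_{i0}}) \subseteq L_{\xi}$) supplies precisely the missing details, including the sensible handling of the $I = \emptyset$ edge case.
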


Now suppose that {\it P} (the underlying partial order of the Kripke structure 
to be built) is a tree of height $\omega$. Also assume that {\it P}
is nowhere degenerate, meaning that each node has incompatible extensions.
Much of what follows would work in a more general context, but we have no
need of such a detailed investigation.
We would like some internal notion of a branch through {\it P}; that is, a
Kripke set that behaves as such. Given an
external branch B (that is, a branch through {\it P} in the classical sense), 
the obvious internalization
of B would be $\{1_{\sigma} \mid \sigma \in B\}$. But if $\tau$ is
perpendicular to any member of B, then at node $\tau$ this looks like \{1\}.
Even worse, $\neg \neg$ B = \{1\}, and it's not possible to get two such
branches forced at any node to be unequal. Hence we would like branches that
reappear even after you've fallen off of them.

In the following, we explicitly distinguish between those definitions and lemmas 
that of necessity are to be evaluated in {\bf V} (labeled ``external"), because they
refer to non-Kripke objects, and those, labeled ``internal", that can be 
evaluated in an appropriate Kripke structure (meaning one containing certain
parameters or satisfying certain exioms from IZF).

\begin{definition} (external) $\hat{P}$ is the full Kripke subset of T;
that is, $\hat{P}_{\sigma}$ = T.
\end{definition}

The following Kripke-internal definition is $\Delta_{0}$, and hence can be
evaluated and applied (such as in the definition and lemma thereafter) in any
transitive Kripke set containing $\hat{P}$ even
without an ambient Kripke universe. (Cf. the comments on ordinality at the end of section
\ref{section-int-def}.)

\begin{definition} (internal)
B $\subseteq$ $\hat{P}$  is a branch through $\hat{P}$  if
\begin{enumerate}
\item	$\alpha \supseteq \beta \in B \rightarrow \alpha \in B$,
\item	$\forall \alpha, \beta \in B \;\;\; \alpha \subseteq \beta \:\vee\: \beta
\subseteq
\alpha,$ and
\item	$\forall \gamma \in \hat{P} \:((\forall \beta \in B \;\; (\gamma
\subseteq \beta
\:\vee\:
\beta \subseteq \gamma)) \rightarrow \gamma \in B)$.
\end{enumerate} \label{branches}
\end{definition}

The reader may have wondered why 1$_{\sigma}$ was taken as being 1 at nodes 
incompatible with $\sigma$, instead of 0, which might first have come to mind.
The answer is clause 1) in the definition above. Otherwise, if we had defined 
(1$_{\sigma}$)$_{\tau} = \emptyset$ for $\tau \perp \sigma$, then consider
what would hold of a branch, i.e. let $\rho \models$ ``B is a branch through $\hat{P}$".
By clause 3) $\rho \models ``B \not = \emptyset"$, so let $\sigma$ be such that
$\rho \models ``1_{\sigma} \in B"$. In practice it will be easy to extend $\rho$ to $\tau
\perp \sigma$; then $\tau \models ``1_{\sigma} \in B".$ $\tau \models ``1_{\sigma} = 0"$ 
because P is a tree (i.e. $\forall \tau' \geq \tau \; \tau' \perp \sigma$), so $\tau 
\models ``0 \in B".$ By clause 1) then $\tau \models ``B = \hat{P}"$, which would
then run afoul of clause 2). So in order to have branches at all, 1$_{\sigma}$ needed
to be defined as it was.

\begin{definition} (external) If $\sigma \models$ U $\subseteq$ $\hat{P}$
then for $\tau
\geq
\sigma$ ext$_{\tau}$(U) = \{$\rho \geq \tau \mid \tau \models 1_{\rho} \in$
U\}: ``the externalisation of U at $\tau$''.
\end{definition}

\begin{lemma} (external)
$\sigma \models$ ``B is a branch through $\hat{P}$'' iff for all $\tau \geq
\sigma$
ext$_{\tau}$(B) is a
branch through \it{P}$_{\geq \tau}$ and 1$_{\bot} \in B_{\sigma}$.
\label{externalization lemma}
\end{lemma}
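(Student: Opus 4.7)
The plan is to set up the dictionary $\rho \in$ ext$_\tau(B) \iff \tau \models 1_\rho \in B$ and verify each of the three internal branch clauses against its external counterpart, with a case analysis keyed to a collapse phenomenon for the elements $1_{\sigma'}$.

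First I would record a preliminary inclusion computation: unwinding the Kripke semantics of $\subseteq$, $\tau \models 1_{\sigma_1} \subseteq 1_{\sigma_2}$ amounts to ``for every $\tau' \geq \tau$, $\tau' \leq \sigma_1$ implies $\tau' \leq \sigma_2$'', and when $\tau \leq \sigma_1, \sigma_2$ the instance $\tau' = \sigma_1$ collapses this to $\sigma_2 \leq \sigma_1$. Two corollaries used throughout: $1_{\bot}$ is the $\subseteq$-maximum of $\hat{P}$, and if $\tau > \bot$ and $\sigma' \not\geq \tau$, then $1_{\sigma'}$ coincides internally with $1_{\bot}$ at $\tau$ (both behave as $\{0\}$ at $\tau$ and at all strict extensions of $\tau$).

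For the forward direction I first apply clause 3 at $\sigma$ with $\gamma = 1_{\bot}$ (whose compatibility antecedent is automatic since every $\beta \in \hat{P}$ is $\subseteq 1_{\bot}$), obtaining $1_{\bot} \in B_\sigma$. Then, fixing $\tau \geq \sigma$ with $\tau \models$ ``$B$ is a branch'' by Kripke monotonicity: downward closure of ext$_\tau(B)$ in $P_{\geq \tau}$ follows from clause 1 via $\rho \leq \rho' \Leftrightarrow 1_{\rho'} \subseteq 1_\rho$; linear comparability of $\rho, \rho' \in$ ext$_\tau(B)$ follows from clause 2, Kripke disjunction, and the preliminary computation instantiated at $\tau' = \rho'$; and maximality comes from clause 3 applied with $\gamma = 1_\rho$, whose antecedent is checked at every $\tau' \geq \tau$ and $\beta = 1_{\sigma'} \in B_{\tau'}$ by splitting on whether $\sigma' \geq \tau'$ (then $\sigma' \in$ ext$_{\tau'}(B)$ is comparable with $\rho$ by branchhood) or $\sigma' \not\geq \tau'$ (then the collapse makes $\tau' \models 1_\rho \subseteq \beta$ automatic).

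The backward direction is dual. Given the right-hand side, I check the three internal clauses at $\sigma$ by the same case analysis on $\beta = 1_{\sigma'}$ at each $\tau' \geq \sigma$: the sub-case $\sigma' \not\geq \tau'$ is absorbed into the automatic $1_{\bot} \in B_{\tau'}$ via the collapse together with Kripke monotonicity of $1_{\bot} \in B_\sigma$, while $\sigma' \geq \tau'$ reduces cleanly to the external branch property of ext$_{\tau'}(B)$ in $P_{\geq \tau'}$. The main obstacle throughout is the bookkeeping in this case split, especially the boundary sub-case $\sigma' = \tau'$ in which $1_{\sigma'}$ is empty at $\tau'$ but becomes $\{0\}$ at strict extensions; this collapse is precisely what makes the clause $1_{\bot} \in B_\sigma$ a necessary standalone requirement on the right-hand side rather than a consequence of the ext conditions, and what the tree and nowhere-degenerate hypotheses on $P$ are used to navigate.
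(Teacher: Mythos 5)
Your dictionary ($\rho \in \mathrm{ext}_\tau(B)$ iff $\tau \models 1_\rho \in B$, the correspondence $\rho \leq \rho'$ iff $1_{\rho'} \subseteq 1_\rho$, and the collapse of $1_{\sigma'}$ to $1$ at nodes not below $\sigma'$) is the right framework, and your handling of clauses 1 and 2, of $1_\bot \in B_\sigma$, and of the backward direction matches the paper's proof. (One small slip: your unwinding of $\tau \models 1_{\sigma_1} \subseteq 1_{\sigma_2}$ has the implication reversed --- it should read ``$\tau' \leq \sigma_2$ implies $\tau' \leq \sigma_1$'' --- though the consequences you actually draw from it are the correct ones.)

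The genuine gap is in the forward direction, in what you call maximality, i.e.\ the verification that $\mathrm{ext}_\tau(B)$ has no final element; this is the bulk of the paper's proof. Your single application of clause 3 with $\gamma = 1_\rho$ fails precisely in the sub-case $\sigma' \geq \tau'$ with $\tau \leq \tau' \leq \rho$: there you justify comparability of $1_{\sigma'}$ with $1_\rho$ at $\tau'$ ``by branchhood,'' but clause 2 of $B$'s branchhood only compares two elements both forced into $B$ at $\tau'$, and $\tau' \models 1_\rho \in B$ is exactly the conclusion you are trying to reach; invoking the branchhood of $\mathrm{ext}_{\tau'}(B)$ instead is equally circular, and in any case only compares members of $\mathrm{ext}_{\tau'}(B)$ with one another, not with $\rho$. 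The danger is real: $\mathrm{ext}_{\tau'}(B)$ for $\tau' > \tau$ may contain nodes not visible in $\mathrm{ext}_\tau(B)$, and nothing in your hypotheses rules out that such a node is incompatible with $\rho$, in which case the antecedent of clause 3 genuinely fails. The paper's argument shows what is actually required: supposing $\rho$ is the final element of $\mathrm{ext}_\tau(B)$, one first works at the node $\rho$ itself, using an immediate successor $\pi$ of $\rho$ (this is where nowhere-degeneracy and the height-$\omega$ hypothesis enter) together with clause 3 at $\rho$ to force some $\pi > \rho$ into $\mathrm{ext}_\rho(B)$; only then is $\pi$ pulled back into $\mathrm{ext}_\tau(B)$ by a second application of clause 3 at $\tau$, in which the intermediate nodes $\tau \leq \xi < \rho$ are handled by using $\xi \models 1_\rho \in B$ (known, since $\rho \in \mathrm{ext}_\tau(B)$) as a mediator: clause 2 compares $\beta$ with $1_\rho$ at $\xi$, and comparability with $1_\pi$ is then inferred from that. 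Without this two-stage bootstrap the argument does not close. Two smaller points: you never verify that $\mathrm{ext}_\tau(B)$ is non-empty for $\tau > \sigma$ (one needs $\tau \models 1_\tau \in B$, again via clause 3), and your diagnosis of why $1_\bot \in B_\sigma$ must be a standalone hypothesis points at the wrong case --- it is needed because the ext conditions say nothing about members $1_{\sigma'}$ of $B$ with $\sigma'$ not above $\sigma$, all of which are extensionally $1_\bot$ there, and clause 1 requires them to be present.
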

\begin{proof}
Recall that the notation Tr$_{\geq \tau}$ for a tree Tr means the subtree
of nodes extending $\tau$ (including $\tau$ itself) and the notation
$X_{\sigma}$ for a Kripke set X refers to the collection of elements forced
at $\sigma$ into X. Recall also that a branch of a tree classically is a
non-empty, linearly ordered subset of a tree which is closed downwards and
(if the tree has no terminal leaves, as in our context) has no final
element.

$\rightarrow$ : Suppose $\tau \geq \sigma$.

a) ext$_{\tau}$(B) is non-empty: For $\tau = \bot \; \bot \models ``\forall
\beta \in \hat{P} \; \beta \subseteq 1_{\bot}$''. By clause 3) in the
definition of a branch, $\bot \models ``1_{\bot} \in$ B'', so $\bot \in$
ext$_{\bot}$(B). For $\tau \not = \bot \; \tau \models ``\forall \beta \in
\hat{P} \; \beta = 1 \vee \beta \subseteq 1_{\tau}$''. Again by clause 3),
$\tau \models ``1_{\tau} \in$ B'', and $\tau \in$ ext$_{\tau}$(B).

b) ext$_{\tau}$(B) is linearly ordered: Suppose $\rho, \xi \in$
ext$_{\tau}$(B). Then $\tau \models ``1_{\rho}, 1_{\xi} \in B$''. By clause
2) in the definition of a branch, $\tau \models ``1_{\rho} \subseteq
1_{\xi} \; \vee \; 1_{\xi} \subseteq 1_{\rho}$''. If $\rho$ and $\xi$ were
incompatible, then $\rho \not \models ``1_{\xi} \subseteq 1_{\rho}$'' and
$\xi \not \models ``1_{\rho} \subseteq 1_{\xi}$''. This, however,
contradicts the semantics of ``$\vee$'' in Kripke structures, by which one
of the two disjuncts must hold. Hence $\rho$ and $\xi$ are compatible.

c) ext$_{\tau}$(B) is closed downwards: Suppose $\rho \in$ ext$_{\tau}$(B)
and $\tau \leq \xi \leq \rho$. Then $\tau \models ``1_{\rho} \in B$'' and
$\tau \models ``1_{\rho} \subseteq 1_{\xi}$''. By clause 1) in the
definition of a branch, $\tau \models ``1_{\xi} \in B$'', and $\xi \in$
ext$_{\tau}$(B).

d) ext$_{\tau}$(B) has no final element: Suppose to the contrary that
$\rho$ is the final element of ext$_{\tau}$(B). Consider ext$_{\rho}$(B).
From the proof of part a) above, $\rho \in$ ext$_{\rho}$(B). If
ext$_{\rho}$(B) = $\{\rho\}$, then let $\pi$ be an immediate successor of
$\rho$ (which exists since {\it P} was assumed to have an ordinal height
and no degenerate nodes; see the comments after corollary \ref{subsets}). 
Again from part a), we have already seen that $\pi \models
``\forall \beta \in \hat{P} \; \beta = 1 \vee \beta \subseteq 1_{\pi}$'';
since branches are, by definition, subsets of $\hat{P}$, we have in
particular $\pi \models ``\forall \beta \in B \; 1_{\pi} \subseteq \beta
\vee \beta \subseteq 1_{\pi}$''. At an extension $\xi$ of $\rho$
incompatible with $\pi$, $\xi \models ``1_{\pi} = 1$'', so $\xi \models
``\forall \beta \in B \; \beta \subseteq 1_{\pi}$''. Finally, at $\rho$
itself, the only element forced at $\rho$ into B is, by hypothesis,
1$_{\rho}$, and $\rho \models ``1_{\pi} \subseteq 1_{\rho}$''. To
summarize, $\rho \models ``\forall \beta \in B \; 1_{\pi} \subseteq \beta
\vee \beta \subseteq 1_{\pi}$''. By clause 3), $\rho \models ``1_{\pi} \in
B$'', and $\pi \in$ ext$_{\rho}$(B), contrary to hypothesis. So
ext$_{\rho}$(B) $\not = \{\rho\}$.

Let $\pi \not = \rho$ be some member of ext$_{\rho}$(B). We will show that
$\pi \in$ ext$_{\tau}$(B), contradicting the choice of $\rho$ as the final
element of ext$_{\tau}$(B). This argument, once again, uses clause 3). Our
goal is to show that $\tau \models ``\forall \beta \in B \; 1_{\pi}
\subseteq \beta \vee \beta \subseteq 1_{\pi}$''. Once that is accomplished,
it follows that $\tau \models ``1_{\pi} \in B$'', which means $\pi \in$
ext$_{\tau}$(B).

So let $\xi \geq \tau$. If $\xi$ and $\rho$ are incompatible, then so are
$\xi$ and $\pi$ ($\pi$ extends $\rho$). So $\xi \models ``1_{\pi} = 1$'',
and 1 is a superset of every member of $\hat{P}$. If $\xi$ and $\rho$ are
compatible, then either $\rho \leq \xi$ or $\tau \leq \xi < \rho$. In the
first case, $\xi \models ``1_{\pi} \in B$'', because $\rho$ models the
same. By clause 2) in the definition of a branch, $\xi \models ``\forall
\beta \in B \; 1_{\pi} \subseteq \beta \vee \beta \subseteq 1_{\pi}$''. In
the second case, $\tau \leq \xi < \rho$, we have in any case $\xi \models
``1_{\rho} \in B$'', because $\tau$ models the same. If $\xi \models
``\beta \in B$'', then by clause 2) $\xi \models ``1_{\rho} \subseteq \beta
\vee \beta \subseteq 1_{\rho}$''. If $\xi \models ``1_{\rho} \subseteq
\beta$'' then $\xi \models ``1_{\pi} \subseteq \beta$'', because $\xi
\models ``1_{\pi} \subseteq 1_{\rho}$''. If, on the other hand, $\xi
\models ``\beta \subseteq 1_{\rho}$'', then $\rho \models ``\beta \subseteq
1_{\rho}$''. That implies that $\beta$ = 1$_{\mu}$ for some $\mu \geq
\rho$. Furthermore, by clause 2), $\rho \models ``1_{\pi} \subseteq
1_{\mu}$'' or $\rho \models ``1_{\mu} \subseteq 1_{\pi}$''. In the first
case we have $\pi \geq \mu$, in the second $\mu \geq \pi$, and in either
case $\xi \models ``1_{\pi} \subseteq 1_{\mu}$'' or $\xi \models ``1_{\mu}
\subseteq 1_{\pi}$''.

Finally, for this direction of the proof it remains to show that 1$_{\bot}
\in B_{\sigma}$. $\bot \models ``\forall \beta \in \hat{P} \; \beta
\subseteq 1_{\bot}$''. By clause 3), $\sigma \models ``1_{\bot} \in B$''.

$\leftarrow :$
\begin{enumerate}
\item	$\sigma \models ``\alpha \supseteq \beta \in B \rightarrow \alpha \in
B$'': Suppose $\sigma \models ``1_{\tau} \supseteq 1_{\rho} \in B$''. If
$\tau \bot \sigma$ or $\tau < \sigma$ then $\sigma \models ``1_{\tau} = 1 =
1_{\bot} \in B$''. Otherwise $\tau \geq \sigma$. Then $\tau \leq \rho$,
because otherwise $\tau \models ``1_{\rho} = 1$'' and $\tau \not \models
``1_{\tau} = 1$''. Since $\rho \in$ ext$_{\sigma}$(B), which is by
hypothesis a branch, and hence closed downwards, we have $\tau \in$
ext$_{\sigma}$(B), i.e. $\sigma \models ``1_{\tau} \in B$''.

\item	$\sigma \models ``\forall \alpha, \beta \in B \;\;\; \alpha \subseteq
\beta \:\vee\: \beta \subseteq \alpha$'': Suppose $\sigma \models
``1_{\tau}, 1_{\rho} \in B$''. If $\tau \bot \sigma$ or $\tau < \sigma$
then $\sigma \models ``1_{\tau} = 1 \supseteq 1_{\rho}$''. Similarly if
$\rho \bot \sigma$ or $\rho < \sigma$. So $\tau, \rho \geq \sigma$. By
hypothesis $\tau, \rho \in$ ext$_{\sigma}$(B), which is a branch, and hence
linearly ordered; therefore $\tau \leq \rho$ or $\rho \leq \tau$, implying
$\bot \models ``\rho \subseteq \tau$'' or $\bot \models ``\tau \subseteq
\rho$'' respectively.

\item	$\sigma \models ``\forall \gamma \in \hat{P} \:(\forall \beta \in B
\;\; \gamma \subseteq \beta \:\vee\: \beta \subseteq \gamma \rightarrow
\gamma \in B)$'': Suppose $\sigma \models ``\forall \beta \in B \;\;
1_{\tau} \subseteq \beta \:\vee\: \beta \subseteq 1_{\tau}$''. If $\tau
\bot \sigma$ or $\tau < \sigma$ then $\sigma \models ``1_{\tau} = 1 =
1_{\bot} \in B$''. Otherwise $\tau \geq \sigma$. If $\tau$ were
incompatible with some $\rho \in$ ext$_{\sigma}$(B), then $\tau \not
\models ``1_{\rho} \subseteq 1_{\tau}$'' and $\rho \not \models ``1_{\tau}
\subseteq 1_{\rho}$'', yielding $\sigma \not \models ``1_{\tau} \subseteq
1_{\rho} \:\vee\: 1_{\rho} \subseteq 1_{\tau}$'', contrary to the
hypothesis. Since ext$_{\sigma}$(B) as a branch is a maximal linearly
ordered subset of {\it P}$_{\geq \sigma}$, $\tau \in$ ext$_{\sigma}$(B),
and $\sigma \models ``1_{\tau} \in B$''.
\end{enumerate}
\end{proof}

\subsection{Resolvability + IKP + V=L + $\neg\Sigma_{1}$ DC}
\label{section-not-DC}

To be perfectly clear, we really should state what Resolvability means in this context.
The standard definition, that there is a $\Delta_{1}$-definable function  f
: ORD $\rightarrow$ {\bf V}  such that {\bf V} = $\bigcup$ rng(f), is
perfectly coherent, but could be contested as not being the appropriate
correlate to classical resolvability in an intuitionistic setting. After
all, maybe the linearity of the domain is what's vital, so a resolution
should have domain some linearly ordered set of ordinals,
presumably a $\Delta_{1}$ definable set of ordinals. We will take the latter as
the definition of a resolution, because it seems to be a more difficult
definition to fit
(although it is not clear that a function defined on a $\Delta_{1}$ set of
ordinals could
be extended to all the ordinals), and point out that if {\bf V} = {\bf L} then
$\alpha \mapsto L_{\alpha}$ shows resolvability according to the former notion of a
resolution. In fact, the resolution we will be offering will even be extendable to a
$\Delta_{1}$ function on the whole model.

The underlying partial order of the model will be $2^{<\omega}$, with the order
being end-extension ($\sigma < \tau$ iff $\tau$ is a proper end-extension of
$\sigma$.). For each real R : $\omega \rightarrow$ 2 there is a canonical
branch
B(R)
through T:
$$
ext_{\tau}(B(R)) = \{\rho \geq \tau \mid \forall j \; length(\tau) \leq j <
length(\rho) \rightarrow
\tau(j) = R(j) \},
$$
or, equivalently,
$$
B(R)_{\tau} = \{ 1_{\rho} \mid \rho < \tau \; \vee \; \rho \perp \tau \;
\vee \; (\rho \geq \tau \; \wedge \; \forall j \; length(\tau) \leq j <
length(\rho) \rightarrow \tau(j) = R(j)) \}.
$$
B(R) is a branch by the preceding lemma. Notice that if R$_{0}$ and R$_{1}$
differ cofinally then $\bot \models$ B(R$_{0}) \neq$ B(R$_{1}$).

Start with a model {\bf V} of classical ZF. Let G := \{G$_{i} \mid i \in \omega$\} 
be a set of reals mutually Cohen-generic over {\bf V} (that is, reals generic for the forcing
partial order 2$^{< \omega}$).
Actually, the rest of this section save the last lemma applies just as well to any set of 
(mutually cofinally differing) reals. (This last fact we will use
in section \ref{not Pi_2}, where many constructions and lemmas similar to those below are
used, often with the offered proof being merely a reference to the corresponding proof here,
even though the real there is not even generic.)
In {\bf V}[G], construct the (Kripke) branches B(G$_{i}$), $i \in \omega$.
For convenience, call the branch B(G$_{i}$) just B$_{i}$. Consider
\{B$_{i} \mid i
\in \omega$\} $\cup$ \{$\hat{P}$\}. Let $\xi$ be the (Kripke) ordinal from corollary
\ref{subsets} applied to
this set; in particular, each B$_{i}$ and $\hat{P}$ is a member of L$_{\xi}$.

\begin{lemma}
\{B$_{i} \mid i \in \omega$\} is definable over L$_{\xi}$, as the set of
branches through $\hat{P}$.

\end{lemma}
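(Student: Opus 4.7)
The plan is to prove both inclusions between $\{B_i \mid i \in \omega\}$ and the set of branches through $\hat{P}$ in $L_\xi$, using the externalization lemma (Lemma~\ref{externalization lemma}) as the key tool.

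First, I would verify that each $B_i = B(G_i)$ is a branch through $\hat{P}$: by the externalization lemma it suffices to show, for each node $\tau$, that $\mathrm{ext}_\tau(B_i)$ is a classical branch through $P_{\geq \tau}$. From the definition of $B(G_i)$, this is the set of $\rho \geq \tau$ whose tail past length $\mathrm{length}(\tau)$ agrees with $G_i$; this is non-empty, linearly ordered, downward closed, and has no final element. Also, $1_\bot$ is trivially in $(B_i)_\bot$. Since each $B_i$ lies in $L_\xi$ by Corollary~\ref{subsets}, this gives one inclusion.

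For the converse, suppose $B \in L_\xi$ with $\bot \models$ ``$B$ is a branch through $\hat{P}$''. By the externalization lemma, $\mathrm{ext}_\bot(B)$ is a classical branch through $2^{<\omega}$ and so determines a unique real $R : \omega \to 2$. I would then show $B = B(R)$ by comparing $\mathrm{ext}_\tau(B)$ with $\mathrm{ext}_\tau(B(R))$ at each node $\tau$: both are classical branches through $P_{\geq \tau}$, and any $\rho \geq \tau$ extending along $R$ lies in $\mathrm{ext}_\tau(B)$ by downward closure from $\bot$, while linearity rules out anything else. Combined with extensionality, this yields $B = B(R)$.

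The main obstacle is showing $R = G_i$ for some $i$. Since $B \in L_\xi$ and $R$ is $\Delta_0$-definable from $B$, the real $R$ is recoverable inside $L_\xi$. But $L_\xi$ is the iterated definability hierarchy starting from the small ordinal $\xi$, whose only non-ground-model content is the finite combinatorial data encoded by the $B_i$'s; hence $R \in V[G_{i_1}, \ldots, G_{i_n}]$ for the finitely many $B_{i_j}$'s appearing as parameters in the definition of $B$. The mutual Cohen genericity of the $G_i$'s then forces $R$ to be one of the $G_{i_j}$'s, since a real defined from finitely many generics inside $L_\xi$ cannot differ cofinally from every $G_i$. Granted this, $B = B(R) = B_{i_j}$, completing the other inclusion, and $\{B_i \mid i \in \omega\}$ is thus defined over $L_\xi$ by the formula ``$x$ is a branch through $\hat{P}$''.
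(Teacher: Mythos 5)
Your first inclusion (each $B_i$ is a branch through $\hat{P}$ lying in $L_{\xi}$) is fine and is essentially what the paper records via Lemma \ref{externalization lemma} and Corollary \ref{subsets}. The converse direction is where the content of the lemma lies, and your argument for it has two genuine gaps.

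First, a branch $B$ through $\hat{P}$ is \emph{not} determined by ext$_{\bot}$(B). The only coherence that Kripke monotonicity gives is ext$_{\tau}$(B) $\cap$ {\it P}$_{\geq \tau'}$ $\subseteq$ ext$_{\tau'}$(B); at a node $\tau'$ that has fallen off the branch ext$_{\tau}$(B) this constraint is vacuous, and ext$_{\tau'}$(B) may be an arbitrary branch through {\it P}$_{\geq \tau'}$. So ``$B$ follows $R$ at $\bot$'' does not yield $B = B(R)$ --- a branch may recover along a different real after you have fallen off it --- and your appeal to ``downward closure from $\bot$'' says nothing at such a $\tau'$. Second, and more seriously, the genericity step is false as stated and also beside the point. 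From $R \in V[G_{i_1}, \ldots , G_{i_n}]$ you cannot conclude that $R$ is one of the $G_{i_j}$: that model contains many other reals (every real of $V$, the bitwise complement of $G_{i_1}$, and so on), all of which differ cofinally from every $G_i$. What actually pins down the branches of $L_{\xi}$ is not genericity but the extreme smallness of $L_{\xi}$: by Lemma \ref{T-hat-0}, $L_{\xi}$ consists exactly of the elements of $\hat{P}_{0}$, the sets definable over each $B_{i0}$, and the sets definable over $\hat{P}_{0}$. One then checks (i) elements of $\hat{P}_{0}$ are $0$ or $1_{\sigma}$'s, hence not branches; (ii) a branch contained in $B_{i0}$ omits $0$ (clauses 1 and 2 of Definition \ref{branches}), hence is a sub-branch of $B_{i}$, hence equals $B_{i}$ because clause 3 forbids proper sub-branches; and (iii) a set definable over $\hat{P}_{0}$ from finitely many parameters $1_{\sigma}$ is invariant under the automorphisms of $\hat{P}_{0}$ fixing those parameters, so it cannot single out a branch. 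Point (iii) is a pure symmetry argument, not a genericity argument; indeed the paper explicitly reuses this lemma in section \ref{not Pi_2} for a family of branches built from a non-generic real, so a proof resting on mutual Cohen genericity would be too weak for its intended use.
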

\begin{proof}
L$_{\xi}$ = $\bigcup_{\alpha \in \hat{P}_{0}}$ def(L$_{\alpha}$) $\cup
\bigcup_{i \in
\omega}$
def(L$_{B_{i0}}$) $\cup$ def(L$_{\hat{P}_{0}}$). The first union is just
$\hat{P}_{0}$, by
lemma
\ref{T-hat-0}, which doesn't contain a branch. Each term in the second union,
def(L$_{B_{i0}}$), is just def(B$_{i0}$), again by \ref{T-hat-0}. No branch
contains 0,
because, if it did, by clause 1) in the definition of a branch, it would then
contain all of $\hat{P}$, contradicting clause 2). So
any branch which is a subset of B$_{i0}$ is also a subset of B$_{i}$. But
B$_{i}$ is itself a
branch, and no proper subset of a branch is a branch, because of clause 3), as follows. Suppose B is a branch, $\hat{B} \subseteq$ B, and $\hat{B}$ is also a
branch. If $\gamma \in$ B and $\beta \in \hat{B}$, $\beta \in$ B too; using
that
B is a branch, $\gamma \subseteq \beta$ or $\beta \subseteq \gamma$ by
clause 2). Since $\hat{B}$ is a branch, $\gamma \in \hat{B}$ by clause 3).
So B $\subseteq \hat{B}$, and B = $\hat{B}$. In the final case,
def(L$_{\hat{P}_{0}}$),
or, more simply, again by \ref{T-hat-0}, def($\hat{P}_{0}$), consider any
definition, say $\phi(x)$, over $\hat{P}_{0}$. It contains only finitely
many parameters 1$_{\sigma}$. Go to a node $\tau$ beyond or incompatible
with all such $\sigma$'s. $\tau \models \phi (1_{\rho})$ for some extension
$\rho$ of $\tau$  iff $\tau \models \phi (1_{\rho})$ for all extensions
$\rho$ of $\tau$ of the same length, by symmetry. (There is an automorphism
of $\hat{P}_{0}$ switching any given pair of $\rho$'s.) So $\phi$ does not
define a branch.
\end{proof}

The model {\bf M} can now be defined inductively:
$$
{\bf M}_{0} = L_{\xi};
$$
$$
{\bf M}_{\alpha + 1} = def({\bf M_{\alpha}});
$$
$$
{\bf M_{\lambda}} = \bigcup_{\alpha < \lambda} {\bf M_{\alpha}}; \; and
$$
$$
{\bf M} = {\bf M}_{\omega_{1}^{CK}}.
$$

\begin{lemma}
{\bf M} $\models$ IKP. \label{IKP}
\end{lemma}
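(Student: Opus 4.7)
The plan is to verify each axiom of IKP at every node of $\mathbf{M}$. The routine axioms fall out of the iterated definability construction essentially as in the classical $L$-construction. Empty Set and Infinity hold because $\emptyset$ and $\omega$ live already in $L_\xi = \mathbf{M}_0$. Pairing, Union, and $\Delta_0$ Separation are each handled by a single application of \emph{def}: given sets $x, y \in \mathbf{M}_\alpha$, the set $\{x, y\}$, the set $\bigcup x$, and the set $\{z \in x : \phi(z)\}$ (for $\phi$ a $\Delta_0$ formula with parameters from $\mathbf{M}_\alpha$) are all in $\mathbf{M}_{\alpha+1}$, and $\alpha + 1 < \omega_1^{CK}$. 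Extensionality holds at every stage by the general fact established in Section \ref{section-int-def} that \emph{def} preserves extensionality. For $\in$-Induction one uses that the membership relation of $\mathbf{M}$ is grounded in the external well-founded hierarchy indexed by $\alpha < \omega_1^{CK}$, so the standard proof adapts.

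The real content is $\Delta_0$ Collection. The key observation is that everything is taking place inside the classical admissible set $L_{\omega_1^{CK}}[G]$: $L_\xi$ is countable and constructible from $G$, the \emph{def} operator is $\Delta_1$-definable, the hierarchy $\langle \mathbf{M}_\alpha : \alpha < \omega_1^{CK}\rangle$ is $\Sigma_1$-definable over $L_{\omega_1^{CK}}[G]$, and consequently the Kripke forcing relation ``$\tau \models_\mathbf{M} \phi(\vec a)$'' for $\Delta_0$ formulas $\phi$ is $\Delta_1$ over $L_{\omega_1^{CK}}[G]$ (in the parameters $\tau$, $\vec a$, and an index $\alpha$ bounding the stage where $\vec a$ first appear).

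Now suppose $\sigma \models \forall x \in A \; \exists y \; \phi(x, y)$ with $\phi \in \Delta_0$ and $A \in \mathbf{M}_\sigma$. By the Kripke semantics, for every $\tau \geq \sigma$ and every $x \in A_\tau$ there exists a $y \in \mathbf{M}_\tau$ with $\tau \models \phi(x,y)$, hence a least $\alpha = \alpha(x, \tau) < \omega_1^{CK}$ such that some such $y$ lies in $\mathbf{M}_\alpha$. The function $(x, \tau) \mapsto \alpha(x, \tau)$ is $\Sigma_1$ over $L_{\omega_1^{CK}}[G]$ and its domain (pairs with $x \in A_\tau$, $\tau \geq \sigma$) is a set in $L_{\omega_1^{CK}}[G]$. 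By classical $\Sigma$-Bounding in $L_{\omega_1^{CK}}[G]$, the range is bounded: fix $\beta < \omega_1^{CK}$ dominating all $\alpha(x,\tau)$. Then $B := \mathbf{M}_\beta$ belongs to $\mathbf{M}_{\beta+1} \subseteq \mathbf{M}_\sigma$ (as a definable subset of itself), and by construction $\sigma \models \forall x \in A \; \exists y \in B \; \phi(x,y)$.

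The chief obstacle is the last paragraph: one must be careful that the forcing relation really is $\Delta_1$ over $L_{\omega_1^{CK}}[G]$ (which requires absoluteness of the inductive construction of the $\mathbf{M}_\alpha$'s and of the Kripke satisfaction clauses), and that the bounding set $\mathbf{M}_\beta$, as an internal Kripke set at node $\sigma$, genuinely collects witnesses uniformly across all extensions $\tau \geq \sigma$ even though the witness $y$ may vary with $\tau$. Both points reduce to the fact that the entire construction is definable inside an admissible set where classical $\Sigma$-Bounding applies, which is the hinge of the argument.
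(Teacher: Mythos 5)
Your overall strategy is the same as the paper's: handle the routine axioms by end-extension and a single application of $def$, and reduce $\Delta_0$ Bounding to classical admissibility of $L_{\omega_1^{CK}}[G]$ by observing that the hierarchy $\langle \mathbf{M}_\alpha\rangle$ is $\Sigma_1$ over that model and that satisfaction of $\Delta_0$ formulas in $\mathbf{M}$ translates into (essentially $\Delta_0$, hence certainly $\Delta_1$) statements of the ambient admissible set, after which $\Sigma$-Bounding supplies a $\beta$ with $\mathbf{M}_\beta \in \mathbf{M}_{\beta+1}$ serving as the collecting set. That part of your argument matches the paper's and is sound.

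There is, however, one concrete error: your disposal of Infinity by asserting that $\omega$ ``lives already in $L_\xi = \mathbf{M}_0$'' is false. Every element of $L_\xi$ is (by Lemma \ref{T-hat-0} and the computation of $def(L_{1_\sigma})$) a subset of $\hat P_0$, i.e.\ a Kripke set whose members are among $0$ and the $1_\sigma$'s; since no $1_\sigma$ equals the internalization $\overline{1}=\{0\}$ (each $1_\sigma$ is empty at nodes $\tau\le\sigma$), not even $\overline{2}=\{0,\{0\}\}$ is in $L_\xi$, let alone $\overline\omega$. The paper has to work for Infinity: it builds $\overline n$ by recursion (showing $\overline{n+2}\in\mathbf{M}_n$) and then obtains $\overline\omega$ only as a definable subset of $\mathbf{M}_\omega$, i.e.\ at stage $\omega+1$ of the iteration, together with a verification that $\overline\omega\cap\mathbf{M}_n=\{\overline m\mid m\le n+2\}$ so that no nonstandard elements creep in. You should replace your one-line claim with an argument of this kind; the rest of your proof stands.
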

\begin{proof}
Most of the axioms depend on the fact that if $\beta > \alpha$ then {\bf
M}$_{\beta}$
is an end-extension of {\bf M}$_{\alpha}$. For instance, as has already been
mentioned,
{\bf M}$_{\alpha}$ is proven to be extensional inductively on $\alpha$, using
this
end-extension property for the limit cases. As another example, consider
Pairing.
If y, z $\in$ {\bf M}, let $\alpha$ be such that y, z $\in$ {\bf M}$_{\alpha}$.
Now
consider [``x = y $\vee$ x = z''] $\in$ {\bf M}$_{\alpha + 1}$. This set is the
pair
\{y, z\} in {\bf M}$_{\alpha + 1}$; since {\bf M} end-extends {\bf
M}$_{\alpha +
1}$,
the same fact holds in {\bf M}. Empty and Union are similar. $\Delta_{0}$
Comprehension
also depends on this, being the reason $\Delta_{0}$ formulas are absolute
between
{\bf M}$_{\alpha}$ and {\bf M}. (So if $\phi$(x) is $\Delta_{0}$, and all of its
parameters, as well as X, are in {\bf M}$_{\alpha}$, then [x $\in$ X $\wedge$
$\phi$(x)] $\in$ {\bf M}$_{\alpha + 1}$ is the desired subset in {\bf M}.) Even
Foundation uses end-extensionality. Suppose {\bf M} $\not\models$ Foundation.
Let $\phi$ and $\sigma$ be such that $\sigma \models \forall x ((\forall y \in x
 \; \phi(y)) \rightarrow \phi(x))$ and $\sigma \not\models \forall x \;
\phi(x)$. Let
$\tau \geq \sigma$ and $x \in ({\bf M}_{\alpha})_{\tau}$ be such that $\tau
\not\models \phi(x)$; moreover, assume that $\alpha$ is the least such ordinal.
By the choice of $\alpha$, if y $\in ({\bf M}_{\beta})_{\xi} \; (\beta <
\alpha)$
then $\xi \models \phi(y)$. By end-extensionality, if $\xi \models y \in x$ then
y $\in {\bf M}_{\beta}$, some $\beta < \alpha$. So $\tau \models \forall y
\in x
\;
\phi(y)$, hence, by hypothesis, $\tau \models \phi(x)$, a contradiction. So
{\bf M} $\models$ Foundation.

For $\Delta_{0}$ Bounding, {\bf M} can be considered to be
constructed by a $\Sigma_{1}$ induction over the standard
L$_{\omega_{1}^{CK}}$[G], so any $\Delta_{0}$({\bf M}) formula $\phi$ is
equivalent
to a $\Delta_{0}$(L$_{\omega_{1}^{CK}}$[G]) formula $\phi^{\ast}$. (This
would be
proven inductively on formulas, with, for instance, $\phi \rightarrow \psi$
going to $\forall
\sigma \in 2^{<\omega} \sigma \models \phi \rightarrow \sigma \models
\psi$. The
important observation is that only bounded quantifiers are added.) So if {\bf
M} $\models ``\forall x
\in A \; \exists y \; \phi (x, y)$'' then in L$_{\omega_{1}^{CK}}$[G]
$\forall x \in_{\bf M}
A \;\exists \alpha \; \exists y \in {\bf M_{\alpha}} \; \phi^{\ast} (x,
y)$, and
by the
admissibility of L$_{\omega_{1}^{CK}}$[G] the $\alpha$'s needed can be
bounded.

{\bf M} also satisfies Infinity. To see this, define
inductively on n $\in \omega$ the set $\overline{n} \in$ {\bf M}, the internal
version of n, as follows: $\overline{0} = \emptyset^{\bf M}$, $\overline{n+1} \;
= \; (n \cup \{n\})^{\bf M}$. Inductively, $\overline{n} \in {\bf M}_{n}$; in
fact, $\overline{n+2} \in {\bf M}_{n}$. Definably over {\bf M}$_{\omega}$, let
$\overline{\omega}$ be \{$\alpha \mid \alpha$ is an ordinal, and either $\alpha$
= 0 or $\alpha$ is a successor, and $\forall \beta \in \alpha \; \beta$ is 0 or
a
successor\}. $\overline{\omega}$ witnesses that {\bf M} $\models$ Infinity;
moreover, one can show inductively that $\overline{\omega} \cap {\bf M}_{n} \;
= \; \{\overline{m} \mid m \leq n+2 \}$, so $\overline{\omega} = \{ \overline{n}
\mid n \in \omega \}$.
\end{proof}

\begin{lemma}
{\bf M} is resolvable. \label{resolvable}
\end{lemma}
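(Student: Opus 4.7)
The plan is to exhibit $f(\alpha) = \mathbf{M}_{\alpha}$ as the resolving function, with domain the $\Delta_{0}$-definable class $D$ of trichotomously ordered ordinals of $\mathbf{M}$. Concretely, let
\[
D = \{ \alpha \in \text{ORD} \mid \forall \beta, \gamma \in \alpha \; (\beta \in \gamma \vee \beta = \gamma \vee \gamma \in \beta) \}.
\]
Since the bracketed condition is $\Delta_{0}$, $D$ is $\Delta_{0}$-definable, hence in particular $\Delta_{1}$, and $D$ is provably linearly ordered by $\in$. The classical ordinals of $\mathbf{V}$ below $\omega_{1}^{CK}$ will turn out to be precisely the (internal) members of $D$ that matter.

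Next, I would use $\Sigma$ Recursion (which holds in IKP, as recalled in Section~\ref{secIKP}) to define
\[
f(0) = L_{\xi}, \qquad f(\alpha) = \bigcup_{\beta \in \alpha} \text{def}(f(\beta)) \text{ for } \alpha \in D,
\]
using the uniform form of ordinal recursion that avoids the successor-versus-limit case split. The operator $\text{def}$ is $\Delta_{1}$-definable, $L_{\xi}$ is a parameter available in $\mathbf{M}_{0}$, and the domain $D$ is $\Delta_{0}$, so $f$ comes out $\Sigma_{1}$-definable, and in fact $\Delta_{1}$ because the graph is functional on $D$. (Moreover, extending the same clause $f(\alpha) = \bigcup_{\beta \in \alpha} \text{def}(f(\beta))$ to all ordinals of $\mathbf{M}$ yields the promised $\Delta_{1}$ total extension, though this extension no longer has linearly ordered domain.)

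It remains to verify $\mathbf{M} = \bigcup_{\alpha \in D} f(\alpha)$. Given any $x \in \mathbf{M}$, there is an external ordinal $\alpha < \omega_{1}^{CK}$ with $x \in \mathbf{M}_{\alpha}$. This $\alpha$ is a genuine classical ordinal, so it embeds into $\mathbf{M}$ as a trichotomous ordinal, i.e.\ $\mathbf{M} \models \alpha \in D$. By the absoluteness of the def-hierarchy established in Section~\ref{section-int-def}, the externally computed $\mathbf{M}_{\alpha}$ coincides with the internally computed $f(\alpha)$, so $\mathbf{M} \models x \in f(\alpha)$. Hence $\mathbf{M} \models \forall x \, \exists \alpha \in D \; x \in f(\alpha)$.

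The main obstacle is not really any single calculation but the bookkeeping around absoluteness: one must be sure that the recursion written down inside $\mathbf{M}$ actually produces, at each classical $\alpha$, the same Kripke object that was used externally to build $\mathbf{M}_{\alpha}$. This is already encapsulated in the remarks of Section~\ref{section-int-def} about external and internal iterations of $\text{def}$ agreeing in any sufficiently strong ambient Kripke structure (such as our $\mathbf{M}$, which by Lemma~\ref{IKP} satisfies IKP), so nothing new needs to be proved; one just needs to cite that framework and note that the parameter $L_{\xi}$ is available from $\mathbf{M}_{0}$.
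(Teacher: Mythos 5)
Your overall strategy---resolve $\mathbf{M}$ along an internalized copy of $\omega_{1}^{CK}$ via $\alpha \mapsto \mathbf{M}_{\alpha}$, defined by $\Sigma$ Recursion with parameter $L_{\xi}$---is the same as the paper's, and the closing steps (the $\Sigma_{1}$-to-$\Delta_{1}$ upgrade for functional graphs, the availability of $L_{\xi}$ in $\mathbf{M}_{1}$) match the paper as well. But there is a genuine gap at the heart of the argument: your domain $D$ is not linearly ordered. The condition $\forall \beta, \gamma \in \alpha\,(\beta \in \gamma \vee \beta = \gamma \vee \gamma \in \beta)$ constrains only the \emph{members} of $\alpha$, not $\alpha$'s relation to other elements of $D$. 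The model is saturated with non-classical ordinals whose members are trivially trichotomous but which are pairwise incomparable: each $1_{\sigma}$ has $0$ as its only possible member, so $1_{\sigma} \in D$, yet for incompatible $\sigma, \tau$ the bottom node forces none of $1_{\sigma} \in 1_{\tau}$, $1_{\sigma} = 1_{\tau}$, $1_{\tau} \in 1_{\sigma}$ (and likewise $1_{\sigma}$ is incomparable with $\overline{1}$). Since $L_{\xi} = \mathbf{M}_{0}$ already contains all the $1_{\sigma}$'s, $D$ as defined fails the linearity requirement in the very definition of a resolution, and there is no evident $\Delta_{0}$ repair: any attempt to demand comparability with the rest of $D$ makes the definition unbounded or circular.

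The missing idea is precisely how to carve out the standard ordinals $\{\overline{\alpha} \mid \alpha < \omega_{1}^{CK}\}$ in a $\Delta_{1}$ way. The paper does this with a recursive predicate (``standard'' means $0$, a successor of a standard ordinal, or an $\overline{\omega}$-limit of standard ordinals), which is $\Sigma$ by the Second Recursion Theorem; the $\Delta_{1}$-ness then comes from the observation that a witness to standardness of $\alpha$, if one exists, already lives in $L_{\alpha + \overline{\omega}}$, so non-standardness is also $\Sigma_{1}$ (``$L_{\alpha+\overline{\omega}}$ exists and contains no witness''), packaged carefully into the paper's pair-of-$\Delta_{0}$-formulas notion of $\Delta_{1}$. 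A second, related omission: you assert that each classical $\alpha < \omega_{1}^{CK}$ ``embeds into $\mathbf{M}$,'' but $\overline{\alpha} \in \mathbf{M}$ itself requires an inductive argument (using admissibility of the ambient $L_{\omega_{1}^{CK}}[G]$ to bound the stages containing the $\overline{\beta}$'s \emph{and their standardness witnesses}, so that $\overline{\alpha}$ becomes definable over some $\mathbf{M}_{\gamma}$ as the set of standard ordinals). That definability step again needs the standardness predicate, not trichotomy, since the trichotomous ordinals of $\mathbf{M}_{\gamma}$ include the spurious $1_{\sigma}$'s.
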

\begin{proof}
The resolution of {\bf M} will be along $\overline{\omega_{1}^{CK}}$, which is
meant as the
internalization of $\omega_{1}^{CK}$, as $\overline{\omega}$ is for $\omega$.
$\overline{\omega_{1}^{CK}}$ is defined (in {\bf V}[G]) as \{ $\overline{\alpha} \mid \alpha <
\omega_{1}^{CK}$ \}, where $\overline{\alpha} \in$ {\bf M} is
\{ $\overline{\beta} \mid \beta < \alpha $ \}. We need to show that for $\alpha
< \omega_{1}^{CK} \; \overline{\alpha} \in$ {\bf M}. Say that an {\bf M}-ordinal
is standard if it's 0, the successor of a standard ordinal, or an
$\overline{\omega}$-
limit of standard ordinals. Notice that this definition is internal to {\bf M}.
By
the Second Recursion Theorem, there is a $\Sigma$ definition of standardness in
{\bf M}. Inductively on $\alpha \geq \omega$, the standard ordinals in {\bf
M}$_{\alpha}$
are a subset of $\overline{\alpha}$. To show that for $\alpha <
\omega_{1}^{CK},
\;
\overline{\alpha} \in {\bf M}$, assume inductively that for $\beta < \alpha \;
\overline{\beta} \in {\bf M}$. By the admissibility of the ambient universe
(i.e.
L$_{\omega_{1}^{CK}}$[G]), there is a $\gamma$ such that for each $\beta <
\alpha$ both $\overline{\beta}$ and a witness that $\overline{\beta}$ is
standard
are in {\bf M}$_{\gamma}$. (Here, a witness to standardness is a witness to the
$\Sigma$ definition of standardness.) Then either $\overline{\alpha}$ is
definable
over {\bf M}$_{\gamma}$ as the set of standard ordinals, or $\overline{\alpha}$
is already in {\bf M}$_{\gamma}$. Hence $\overline{\omega_{1}^{CK}} \subseteq$
{\bf M}.

In order to have a resolution along $\overline{\omega_{1}^{CK}}$, we need it
$\Delta_{1}$
definable. It's already $\Sigma$ definable, as the set of standard
ordinals. But
the
witnesses to standardness are not really far away from $\overline{\alpha}$
itself.
That is, classically, a witness to the standardness of $\alpha$, consisting of
suitable
predecessors and cofinal $\omega$-sequences, could be found in L$_{\alpha +
\omega}$.
So to see whether $\overline{\alpha}$ is standard in {\bf M}, look in
L$_{\overline{\alpha}
+ \overline{\omega}}$. The function $\alpha \mapsto L_{\alpha}$ is total in
{\bf M}$\models$ IKP (see \cite{me}), so the $\Sigma_{1}$ definition of
``$\alpha$
is not standard'' is ``there is a set L$_{\alpha + \overline{\omega}}$ and
there
is no witness
in that set that $\alpha$ is standard''. The $\Sigma$ definition of standardness
is equivalent to a $\Sigma_{1}$ formula. Now we need to check that this suffices for
the notion of $\Delta_{1}$ defined earlier. Recall that toward the beginning of
section \ref{secIKP} a $\Delta_{1}$ property of a variable $x$ was defined as being given
by a pair of $\Delta_{0}$ formulas $\phi(x,y), \psi(x,y)$ with free variables $x,y$ such that
$$\forall x [\exists y \neg \neg (\phi(x,y) \vee \psi(x,y)) \; \wedge
\forall y \neg (\phi(x,y) \wedge \psi(x,y))].
$$
So let $\phi(x,y)$ be
$$x \in ORD \; \wedge \; y = \langle z_{0}, z_{1} \rangle \; \wedge \; z_{1} \; witnesses \; 
that \; z_{0} = {\bf L}_{x + \overline{\omega}}
$$
$$\wedge \; \exists w \in z_{0} \; w \; witnesses \; that \; x \; is \; standard,
$$ 
and $\psi(x,y)$ be
$$(x \not \in ORD) \; \vee \; (x \in ORD \; \wedge \; y = \langle z_{0}, z_{1} \rangle \; 
\wedge \; z_{1} \; witnesses \; that \; z_{0} = {\bf L}_{x + \overline{\omega}}
$$
$$\wedge \; \forall w \in z_{0} \; w \; does \; not \; witness \; that \; x \; is \; standard),
$$ 
where ``$x + \alpha$" is defined inductively on $\alpha$ even for non-ordinals $x$ in the 
obvious way.
For ordinals $x$ there is certainly a $y$ as desired, and it is also clear that
$\forall x \; \neg \neg (x \in ORD \; \vee \; x \not \in ORD)$. The challenge presented by the 
formalism
as set up is that the $y$ must be chosen before it is decided whether $x$ is an ordinal or not.
Here we have to use the development from the penultimate paragraph of section 
\ref{section-int-def}, in which iterations of definability along well-founded (and not
necessarily well-ordered) lengths were developed. Let 
$z_{0}$ be $(x + \overline{\omega})$-def({\bf 0}) and $z_{1}$ an appropriate construction
of such. If $x$ ever turns out to be an ordinal, then $z_{0}$ is automatically 
${\bf L}_{x + \overline{\omega}}$. With this $\phi$ and $\psi$, the notion ``$\alpha$ is 
standard'' is shown to be $\Delta_{1}$ definable.

From here, the resolution of {\bf M} is its very definition. L$_{\xi}$ is needed
as a parameter, which is no problem because it exists as a set in {\bf M}:
L$_{\xi}$
is definable over {\bf M}$_{0}$ = L$_{\xi}$ as \{ x $\mid$ x = x \}, and so is
in
{\bf M}$_{1}$. ``Y = $\bigcup$X'' is a $\Sigma$ relation, as is ``Y = def(X)''
(for details on the latter, see \cite{me}). So $\alpha \rightarrow {\bf
M}_{\alpha}$
is definable via $\Sigma$ recursion as a $\Sigma$ relation, hence $\Sigma_{1}$
also.
Any $\Sigma_{1}$ definable function is $\Delta_{1}$, even with the notion of
$\Delta_{1}$ operable here, for the same reason as in the classical context, as
follows. Suppose the relation f(x) = y is definable as $\exists z \; \phi(x,y,z).$
Then let $\psi(x,y,w,z)$ be $\phi(x,w,z) \; \wedge \; y \not = w$. Then indeed
$$\forall x,y [\exists w,z \neg \neg (\phi(x,y,z) \vee \psi(x,y,w,z)) \; \wedge
\forall w,z \neg (\phi(x,y,z) \wedge \psi(x,y,w,z))].
$$
Hence {\bf M} is resolvable.
\end{proof}

\begin{lemma}
{\bf M} $\models$ V=L. \label{L}
\end{lemma}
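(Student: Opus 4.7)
The plan is to identify the externally-constructed hierarchy $\mathbf{M}_\alpha$ with an internal $L$-hierarchy, so that every set of $\mathbf{M}$ is internally an element of some $L_\beta$. Concretely, I aim to show that $\mathbf{M} \models \mathbf{M}_{\overline{\alpha}} = L_{\xi + \overline{\alpha}}$ for every $\alpha < \omega_1^{CK}$, which together with $\mathbf{M} = \bigcup_{\alpha < \omega_1^{CK}} \mathbf{M}_\alpha$ immediately yields $\mathbf{M} \models V = L$.

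First I would handle the base. The ordinal $\xi$ from corollary \ref{subsets} is an honest Kripke ordinal, and by the remark at the end of section \ref{section-int-def} it is an ordinal inside $\mathbf{M}$ as well. The external set $\mathbf{M}_0 = L_\xi$ is, by construction, $\xi$-def$(\mathbf{0})$ where $\xi$ is viewed as a well-founded partial order; the absoluteness statement at the end of section \ref{section-int-def} then tells us that $\mathbf{M}$ agrees: $\mathbf{M} \models \mathbf{M}_0 = L_\xi$.

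Next I would induct on $\alpha < \omega_1^{CK}$. At the successor step, $\mathbf{M}_{\alpha+1} = \text{def}(\mathbf{M}_\alpha)$ externally, and the first (unindexed) absoluteness paragraph of section \ref{section-int-def} gives $\mathbf{M} \models \mathbf{M}_{\alpha+1} = \text{def}(\mathbf{M}_\alpha)$; combined with the inductive hypothesis this yields $\mathbf{M} \models \mathbf{M}_{\overline{\alpha+1}} = L_{\xi + \overline{\alpha}+1}$. At limit $\lambda$, the external union corresponds to the internal union in the same way. Throughout, lemma \ref{resolvable} ensures that each $\overline{\alpha}$ for $\alpha < \omega_1^{CK}$ really is a set and an ordinal in $\mathbf{M}$, so the statement $\mathbf{M} \models \mathbf{M}_{\overline{\alpha}} = L_{\xi + \overline{\alpha}}$ is well-formed.

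Given this, let $\sigma \models_{\mathbf{M}} x \in \mathbf{M}$ be arbitrary. Then $x \in \mathbf{M}_\alpha$ for some $\alpha < \omega_1^{CK}$, so $\sigma \models_{\mathbf{M}} x \in L_{\xi + \overline{\alpha}}$, whence $\sigma \models_{\mathbf{M}} \exists \beta \; x \in L_\beta$. This is exactly $V = L$ in $\mathbf{M}$.

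The main obstacle is the careful verification of absoluteness between the externally iterated def and the internally iterated def, especially at the base (where the iteration is along the well-founded partial order $\xi$ rather than along an ordinal) and at limits. These are precisely the points addressed by the absoluteness discussion of section \ref{section-int-def}, so the work is in applying those results cleanly rather than proving anything essentially new.
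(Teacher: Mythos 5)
Your proof is correct, but it is organized differently from the paper's. The paper first verifies, axiom by axiom, that $L^{\bf M}$ itself satisfies IKP, and then argues by induction that any admissible (class) containing $L_{\xi}$ and each $\overline{\alpha}$ must contain every ${\bf M}_{\alpha}$, the key ingredient being that $X \mapsto \mbox{def}(X)$ is a total $\Delta_{1}$ function provably in IKP. You instead pin down the exact correspondence ${\bf M}_{\overline{\alpha}} = L_{\xi + \overline{\alpha}}$ internally and conclude $\forall x \, \exists \beta \, x \in L_{\beta}$ directly; this lets you bypass the admissibility of $L^{\bf M}$ entirely (that verification is needed in the paper only because its induction is phrased as a closure property of admissible sets), at the cost of leaning more heavily on the unproved absoluteness assertions of section \ref{section-int-def} (external versus internal $X$-def, in particular at the base case $\xi$-def$({\bf 0})$), where the paper instead invokes the $\Delta_{1}$-definability of def from \cite{me}. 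Two small points you should make explicit to match the paper's level of rigor: you need $\xi + \overline{\alpha}$ to be a well-defined ordinal of ${\bf M}$ (ordinal addition by $\Sigma$ recursion, as the paper uses for $x + \overline{\omega}$) together with the totality of $\beta \mapsto L_{\beta}$ in ${\bf M} \models$ IKP so that the term $L_{\xi+\overline{\alpha}}$ denotes; and at successors you are using $L_{\gamma+1} = \mbox{def}(L_{\gamma})$, which depends on the end-extension fact $L_{\gamma} \subseteq \mbox{def}(L_{\gamma})$. Neither is a gap, and both approaches ultimately rest on the same idea that the def-hierarchy over $L_{\xi}$ is itself constructible.
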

\begin{proof}
First observe that L$^{\bf M} \models$ IKP. Extensionality and Foundation hold,
because L$^{\bf M}$ is a definable sub-class of a model of Extensionality and
Foundation. Pairing is simple: if x $\in L_{\alpha}$ and y $\in L_{\beta}$ then
x, y $\in L_{\alpha \cup \beta}$, and \{x, y\} $\in$ def(L$_{\alpha \cup
\beta}$) =
L$_{(\alpha \cup \beta) + 1}$. Union is similar: if x $\in L_{\alpha}$ then
$\bigcup x \in$ def(L$_{\alpha}$) = L$_{\alpha + 1}$. $\emptyset \in$
def(L$_{0}$)
= L$_{1}$, and $\overline{\omega} \in$ def(L$_{\overline{\omega}}$) =
L$_{\overline{\omega}
+ 1}$ to satisfy Infinity; for details on the latter, see \cite{me}. For
$\Delta_{0}$
Comprehension, consider \{ x $\in$ X $\mid \phi$(x) \}, $\phi$ a $\Delta_{0}$ formula. Let
$\alpha$ be such that L$_{\alpha}$ contains X as well as $\phi$'s parameters.
Then
\{ x $\in$ X $\mid \phi$(x) \} $\in$ def(L$_{\alpha}$) = L$_{\alpha + 1}$. Even
$\Delta_{0}$ Bounding is smooth. Suppose $\forall x \in X \; \exists y \in
L^{\bf M} \;
\phi(x, y),\; \phi \Delta_{0}$. In {\bf M}, $\forall x \in X \; \exists \alpha
\; \exists
y \in L_{\alpha} \; \phi(x, y)$. By $\Sigma$ Bounding in {\bf M}, let A be
a set
of
ordinals such that $\forall x \in X \; \exists \alpha \in A \; \exists y \in
L_{\alpha} \; \phi(x, y)$. Let $\beta$ = A $\cup \bigcup$A. $\forall x \in X \;
\exists
y \in L_{\beta} \; \phi(x, y)$, and L$_{\beta} \in$ L$^{\bf M}$, showing
$\Delta_{0}$
Bounding.

For any $\alpha < \omega_{1}^{CK}$, since $\overline{\alpha} \in$ {\bf M},
L$_{\overline{\alpha}} \in$ {\bf M}.
$\overline{\alpha}$ is definable over L$_{\overline{\alpha}}$, so
$\overline{\alpha} \in$ L$^{\bf M}$. Also, \{B$_{i} \mid i
\in \omega$\} $\in$ {\bf M}, as is $\hat{P}$, so $\xi \in$ {\bf M}, and
L$_{\xi} \in$
L$^{\bf M}$.
Finally, induction shows that any admissible set containing L$_{\xi}$ as
well as
each $\overline{\alpha} < \overline{\omega_{1}^{CK}}$ must contain all of {\bf
M}.
The crucial fact needed in this induction is that the function X $\mapsto$
def(X)
is $\Delta_{1}$ definable, provably in IKP. This was essentially done in
\cite{me},
even though it was not flagged as such. There it was shown, in IZF, that this
function
in $\Delta_{1}$ over all $\beta > \alpha_{aug} + 7$, where X $\in
L_{\alpha_{aug}}$
and $\alpha_{aug}$ is defined inductively as $\bigcup \{ \gamma_{aug} \mid
\gamma
\in \alpha \} \cup \{ \alpha \} \cup (\omega + 1)$. (The purpose of this
definition
is to get $\omega$ into $\alpha$, hereditarily, as the carrier of syntax.) The
proof
is that a witness to ``Y = def(X)'' is definable over L$_{\alpha_{aug} + 7}$, so
IZF isn't necessary; all you need is some ordinal containing as a member
$\alpha_{aug} + 7$.
Moreover, the recursion theorem shows that in IKP + Infinity the function
$\alpha
\mapsto \alpha_{aug} + 7$ is total. So in IKP, X $\mapsto$ def(X) is
$\Delta_{1}$.
This allows the inductive proof that {\bf M}$_{\alpha} \in$ L$^{\bf M}$, $\alpha
\in \omega_{1}^{CK}$.
\end{proof}

\begin{lemma}
{\bf M} $\models \neg \Sigma_{1}$ DC. \label {not-sig}
\end{lemma}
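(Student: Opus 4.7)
The plan is to adapt the classical construction from Section 3(1), using the Kripke branches $\{B_i\}$ in place of the Cohen reals $\{G_i\}$. Let $T=\{B_i : i\in\omega\}$, which, together with its membership in ${\bf M}$, is furnished by the preceding lemma. Take as the witnessing $\Sigma_1$ formula $\phi(x,y)$ something to the effect of ``$y$ is a finite injective sequence from $T$ whose length exceeds that of $x$, and which end-extends $x$ whenever $x$ is itself such a sequence''. Since the $G_i$ differ cofinally, $\bot\models B_i\neq B_j$ for $i\neq j$, so $T$ genuinely contains infinitely many distinct elements; this should make it routine to check $\forall x\,\exists y\,\phi(x,y)$ in ${\bf M}$ (always producing $y$ of length $\mathrm{length}(x)+1$ from a suitably chosen tail of $\{B_i\}$).

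From here, any putative DC-witness $f\in{\bf M}$ would, via $\bigcup_n f(n)$, encode an injection $g\colon\omega\to T$ with infinite range. So the task reduces to showing that no such $g$ lives in ${\bf M}$.

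The main obstacle is the symmetry argument. Every $g\in{\bf M}$ lies in some ${\bf M}_\alpha$, $\alpha<\omega_1^{CK}$, and is therefore definable over $L_\xi={\bf M}_0$ from finitely many parameters; by the structure of $L_\xi$ (built over $\hat P$ and the $B_i$), only finitely many of the branches, say those with indices in a finite $S\subset\omega$, occur in such parameters. Since $\mathrm{range}(g)$ is infinite, some $g(n)=B_j$ with $j\notin S$. The strategy is then to exhibit a permutation $\pi$ of $\omega$ fixing $S$ pointwise and moving $j$ to some $j'$ outside $S\cup\{\text{indices used by }g\}$, and to promote $\pi$ to an automorphism of ${\bf M}$: $\pi$ acts on the ambient classical $L_{\omega_1^{CK}}[G]$ by permuting the Cohen generics, this lifts through the internalization to permute the $B_i$ (while fixing $\hat P$, the $1_\sigma$, and everything in $L_{\omega_1^{CK}}$), and then commutes with the ``def'' operator by which ${\bf M}$ is built. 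Such an automorphism fixes all parameters of $g$, hence fixes $g$, yet it sends $B_j\mapsto B_{j'}\neq B_j$, contradicting $g(n)=B_j$.

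The delicate bookkeeping will be confirming that the Kripke construction is genuinely equivariant: that the definition of $B(R)$, the closure ordinal $\xi$, and the iterated definability yielding each ${\bf M}_\alpha$ all respect permutations of $\{G_i\}$, so that a permutation of the generics literally induces a structural automorphism of ${\bf M}$. Once that equivariance is in hand, the contradiction above shows $\bot\models\neg\Sigma_1\text{ DC}$.
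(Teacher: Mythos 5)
Your setup --- the $\Sigma_1$ formula about finite injective sequences from $T=\{B_i\mid i\in\omega\}$, and the reduction to showing that no enumeration of infinitely many distinct branches lives in ${\bf M}$ --- is exactly the paper's. The gap is in how you discharge that last step. You propose a ``structural automorphism of ${\bf M}$'' induced by a permutation $\pi$ of the generics, one that fixes $\hat{P}$, each $1_\sigma$, and the parameters of $g$, yet sends $B_j$ to $B_{j'}$. No such automorphism exists: $B_j$ and $B_{j'}$ are extensionally distinguished by which $1_\rho$'s they contain at each node (at $\bot$, $B(R)$ contains exactly those $1_\rho$ with $\rho$ lying along $R$), so any $\in$-automorphism over the fixed partial order $2^{<\omega}$ that fixes every $1_\rho$ must fix every $B_i$. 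Looked at from the other side, permuting the $G_i$'s fixes the \emph{set} $G$, hence fixes $\xi$, $L_\xi$, and the whole iterated-def hierarchy; the induced map on ${\bf M}$ is the identity, which fixes $g$ and $B_j$ alike and yields no contradiction. This is the same phenomenon as in the classical setting: a transitive symmetric extension admits no nontrivial $\in$-automorphisms, and the symmetry argument there lives at the level of names and the forcing relation over the ground model ($p\Vdash\phi(\dot{x})$ iff $\pi p\Vdash\phi(\pi\dot{x})$, plus compatibility of $p$ with $\pi p$), not at the level of the completed model.

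The repair is either to carry out that name-level argument --- for which you would also need that $g$ admits a hereditarily symmetric name with finite support; your claim that $g$ is ``definable over $L_\xi$ from finitely many parameters'' is the Kripke shadow of this but is not literally true for $g\in{\bf M}_\alpha$ with $\alpha\geq 2$, whose parameters live in ${\bf M}_{\alpha-1}$, so a rank induction on supports is needed --- or, as the paper does, simply to externalize: a DC-witness $f$ in ${\bf M}$ determines, inside the ambient universe $L_{\omega_1^{CK}}[G]$ (the symmetric submodel obtained by adjoining the set, not the sequence, of reals), the function $n\mapsto R_n$ where $\bot\models f(\overline{n+1})$ ends in $B(R_n)$, i.e.\ an $\omega$-sequence of distinct elements of $G$. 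The nonexistence of such a sequence in $L_{\omega_1^{CK}}[G]$ is precisely the standard classical fact already invoked in construction 1 of the previous section, so no new symmetry argument is required at all.
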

\begin{proof}
This is where we will use the particular choice of the G$_{i}$'s. Consider the
relation
$\phi$(X, Y)
``if X is an n-tuple of distinct elements of \{B$_{i} \mid i \in \omega$\} then
Y is an
end-extension of X also of distinct elements''. Any function f with domain
$\omega$
satisfying $\phi$(f(i), f(i+1)) (and starting from $\emptyset$) would easily
produce an
$\omega$-sequence through \{G$_{i} \mid i \in \omega$\} of distinct
elements, of
which
there are none in {\bf V}[G], much less L$_{\omega_{1}^{CK}}$[G], to say nothing of
{\bf M}.
\end{proof}

\subsection{$\Pi_{2}$-Reflection + IKP + V=L + $\neg \Sigma_{1}$DC +
$\neg$Resolvability}

Recall the definition of a resolution (see section \ref{section-not-DC}) as being 
a $\Delta_{1}$-definable function f : O$\rightarrow${\bf V} such that {\bf V} = 
$\bigcup$rng(f) and O $\subseteq$ ORD is $\Delta_{1}$-definable and linearly ordered.
In the last section, the argument works just as well for this definition as for
the classical one (O = ORD, of course not mentioning the requirement of the linear
order). Here, though, it wouldn't. If {\bf V} = L then f($\alpha$) = L$_{\alpha}$ is 
a classical resolution of {\bf V}. Hence the need here for a non-classical notion
of a resolution.

The underlying partial order is 2$^{<\omega}$, as before. 

Let G and H be countable sets of mutual Cohen generics. The ambient universe
will be
$\bigcup_{H_{fin}}$ L$_{\omega_{1}^{CK}}$[G $\cup$ H$_{fin}$], where H$_{fin}$
ranges
over the finite subsets of H. Within this universe the full intuitionistic model
{\bf M}$_{big}$
can be defined inductively, just like forcing: a set at stage $\alpha$ is a
function that, for
each node $\sigma \in 2^{<\omega}$, picks out a collection of sets from stages
$< \alpha$
(and this collection must grow as $\sigma$ grows).

More precisely, suppose {\bf M} is an extensional Kripke model (for the language
of set theory) with underlying partial order {\it P} and transition functions
f$_{\sigma \tau}$ within a classical universe {\bf V}. {\cal P}({\bf M}), the
power set of {\bf M}, can be defined
similarly to the way def({\bf M}) was defined in section \ref{general}. A set in
{\cal P}({\bf M}) at a node $\sigma$ is a function f with domain {\it P}$_{\geq
\sigma}$
such that f($\tau$) $\subseteq$ {\bf M}$_{\tau}$, and if x $\in$ f($\sigma$)
then
f$_{\sigma \tau}$(x) $\in$ f($\tau$). The transition function f$_{\sigma \tau}$
is restriction: f$_{\sigma \tau}$(f) = f $\uparrow$ {\it P}$_{\geq \tau}$. {\bf M} can be
embedded
in {\cal P}({\bf M}), with x $\in$ {\bf M}$_{\sigma}$ being identified with the
function f such that f($\tau$) = \{ y $\in$ {\bf M}$_{\tau} \mid$ y $\in$
f$_{\sigma \tau}$(x) \}. By the extensionality of {\bf M}, this embedding is
1-1. $\sigma \models ``f \in g$'' if f is the image of some x under this
embedding,
and x $\in$ g($\sigma$). {\cal P}({\bf M}) is easily seen to be extensional, and
an end-extension of {\bf M}. Furthermore, this process of taking the power set
can
be iterated transfinitely: {\bf M}$_{0}$ = {\bf M}, {\bf M}$_{\alpha + 1}$
= {\cal P}({\bf M}$_{\alpha}$), and {\bf M}$_{\lambda}$ =
$\stackrel{lim}{\rightarrow}$
\{{\bf M}$_{\alpha} \mid \alpha < \lambda$\} (or, more colloquially, $\bigcup
_{\alpha < \lambda} {\bf M}_{\alpha}$). If $\alpha > \beta$ then {\bf
M}$_{\alpha}$ is an extensional
end-extension of {\bf M}$_{\beta}$.

In the current context, {\bf V} is $\bigcup_{H_{fin}}$ L$_{\omega_{1}^{CK}}$[G
$\cup$ H$_{fin}$].
{\bf V} $\models$ KP, but {\bf V} does $\underline{not}$ satisfy Power Set, so some care
must be taken
in the construction of {\cal P}({\bf M}). We
start with {\bf M} being the empty structure -- {\bf M}$_{\sigma}$ = $\emptyset$
-- but we do not claim that each {\bf M}$_{\alpha}$ is a set. Rather, the
generation
of the {\bf M}$_{\alpha}$-hierarchy is given by a $\Sigma_{1}$ induction, so the
relation ``f $\in$ ({\bf M}$_{\alpha}$)$_{\sigma}$'' is $\Sigma_{1}$. Let
{\bf M}$_{big}$ be {\bf M}$_{\omega_{1}^{CK}}$. The final model {\bf M} is
L$^{{\bf M}_{big}}$; that this definition makes sense (i.e. that $\alpha \mapsto
L_{\alpha}$ is a total $\Delta_{1}$-definable function via the standard
definitions)
is implied by the following lemma.

\begin{lemma}
{\bf M}$_{big}$ $\models$ IKP.
\end{lemma}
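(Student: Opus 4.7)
The plan is to mimic the proof of Lemma \ref{IKP} as closely as possible, exploiting that {\bf M}$_{big}$ is the direct limit of the extensional end-extension chain {\bf M}$_{0} \subseteq$ {\bf M}$_{1} \subseteq \ldots \subseteq$ {\bf M}$_{\alpha} \subseteq \ldots$ ($\alpha < \omega_{1}^{CK}$) constructed inside the ambient universe {\bf V} = $\bigcup_{H_{fin}}$ L$_{\omega_{1}^{CK}}$[G $\cup$ H$_{fin}$]. As before, most axioms follow directly from the end-extension property; the sole serious point is $\Delta_{0}$ Bounding, which will need to be reduced to the admissibility of {\bf V}.

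First I would handle the easy axioms. Extensionality is built into ${\cal P}$({\bf M}) and is preserved at limit stages. Foundation goes exactly as in Lemma \ref{IKP}: a putative counterexample at minimal stage $\alpha$ has all its $\in$-members living at earlier stages by end-extensionality, yielding the usual contradiction. Empty Set, Pairing, and Union are handled by the one-stage-up argument: given $y, z \in$ {\bf M}$_{\alpha}$, the node-function picking out $\{y, z\}$ lies in ${\cal P}$({\bf M}$_{\alpha}$) = {\bf M}$_{\alpha + 1}$, and similarly for $\emptyset$ and $\bigcup y$. Infinity is obtained by defining the internalizations $\overline{n}$ stage by stage (with $\overline{n} \in$ {\bf M}$_{n}$) and collecting them by the same $\Delta_{0}$ definition over {\bf M}$_{\omega}$ as in Lemma \ref{IKP}. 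For $\Delta_{0}$ Comprehension, absoluteness of $\Delta_{0}$ formulas between stages (a consequence of end-extension) allows the desired definable subset of $X \in$ {\bf M}$_{\alpha}$ to be taken as the node-function $\sigma \mapsto \{y \in$ ({\bf M}$_{\alpha})_{\sigma} \mid \sigma \models y \in X \wedge \phi(y)\}$, which lies in {\bf M}$_{\alpha + 1}$.

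The main obstacle is $\Delta_{0}$ Bounding, and the subtlety is precisely that {\bf V} does not satisfy Power Set, so the {\bf M}$_{\alpha}$ are only $\Sigma_{1}$-definable classes over {\bf V} rather than genuine sets. I would first show by induction on $\Delta_{0}$ formulas $\phi$ that the satisfaction relation $\sigma \models_{{\bf M}_{big}} \phi$ is $\Delta_{1}$ over {\bf V}, with bounded Kripke quantifiers translating to bounded quantifiers over the already-$\Sigma_{1}$ predicate ``$f \in$ ({\bf M}$_{\alpha})_{\sigma}$''. Then if {\bf M}$_{big} \models \forall x \in A \; \exists y \; \phi(x, y)$ with $\phi$ in $\Delta_{0}$, the translated statement over {\bf V} has the form $\forall x \in_{{\bf M}_{big}} A \; \exists \alpha \; \exists y \in$ ({\bf M}$_{\alpha})_{\sigma} \; \phi^{*}(x, y)$, which is $\Sigma_{1}$ over {\bf V}. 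By the admissibility of {\bf V} (which holds by essentially the classical symmetric-submodel argument of item 3 in the preceding section), the stages $\alpha$ can be bounded below some $\beta < \omega_{1}^{CK}$. The desired $B$ is then the element of ${\cal P}$({\bf M}$_{\beta}$) = {\bf M}$_{\beta + 1} \subseteq$ {\bf M}$_{big}$ given by the node-function $\sigma \mapsto$ ({\bf M}$_{\beta})_{\sigma}$ (or a suitable subfamily thereof), which collects all the needed witnesses.
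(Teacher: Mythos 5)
Your overall strategy is exactly the paper's: handle the easy axioms via the end-extension property one stage at a time, and reduce $\Delta_{0}$ Bounding to the admissibility of the ambient universe {\bf V} by translating $\Delta_{0}$({\bf M}$_{big}$) satisfaction into a $\Delta_{1}$ predicate over {\bf V} and bounding the stages $\alpha$ by some $\beta < \omega_{1}^{CK}$. Up to that point the argument matches the paper's proof essentially line for line.

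There is, however, a genuine gap in your very last step. You correctly stress earlier that {\bf V} does not satisfy Power Set and that the {\bf M}$_{\alpha}$ are therefore only $\Sigma_{1}$-definable classes over {\bf V}, not sets --- and indeed already ({\bf M}$_{2}$)$_{\sigma}$ is a proper class, since its elements essentially code arbitrary upward-closed subsets of $2^{<\omega}$ lying in {\bf V}, and no set of {\bf V} collects all of these. Consequently the node-function $\sigma \mapsto ({\bf M}_{\beta})_{\sigma}$ is not a set in {\bf V}, hence is not an element of ${\cal P}({\bf M}_{\beta})$, and cannot serve as the bounding set $B$. Your parenthetical ``or a suitable subfamily thereof'' points at the right repair but does not carry it out: one needs a \emph{second} application of $\Delta_{0}$ Bounding (Collection) in {\bf V}, applied to the $\Sigma_{1}$ statement ``for every $\tau \geq \sigma$ and every $x \in A_{\tau}$ there is a witness $y \in ({\bf M}_{\beta})_{\tau}$ with $\tau \models \phi(x,y)$,'' to gather an actual \emph{set} of witnesses in {\bf V}; closing that set under the transition functions then yields a legitimate node-function in {\bf M}$_{\beta+1}$. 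This is precisely what the paper means by ``again by $\Delta_{0}$ Bounding in {\bf V} a bounding set $B$ in {\bf M}$_{\beta+1}$ can be constructed.'' With that one extra application of Bounding inserted, your proof is complete and coincides with the paper's.
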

\begin{proof}
For Empty, the function f($\sigma$) = $\emptyset$ is in {\bf M}$_{1}$. For
Infinity, defining $\overline{\omega}$ as in the previous section, the function
f($\sigma$) = $\overline{\omega}$ is in M$_{\omega + 1}$. For Pairing,
if x, y $\in$ {\bf M}$_{\alpha}$, let f($\sigma$) in {\bf M}$_{\alpha + 1}$ be
\{x, y\}. For Union, if x $\in$ {\bf M}$_{\alpha}$, then f($\sigma$) =
$\bigcup$x($\sigma$)
is also in {\bf M}$_{\alpha}$. Extensionality was already remarked as holding.
For
$\Delta_{0}$ Comprehension, by end-extensionality a $\Delta_{0}$ formula over
{\bf M}$_{big}$ is equivalent to a $\Delta_{0}$ formula in {\bf V}, and
$\Delta_{0}$
Comprehension in {\bf V} suffices. For Foundation, suppose to the contrary that
$\sigma \models \forall x (\forall y \in x \; \phi(y) \; \rightarrow \;
\phi(x))$,
but $\sigma \not\models \forall x \phi(x)$. Let $\alpha$ be the least ordinal
such
that for some x $\in$ {\bf M}$_{\alpha}$, $\tau \geq \sigma \; \tau \not\models
\phi(x)$. But if $\xi \models y \in x$ then $y \in {\bf M}_{\beta}$, so $\xi
\models \phi(y)$. Hence $\tau \models \forall y \in x \; \phi(y)$, and $\tau
\models \phi(x)$, a contradiction. Finally, for $\Delta_{0}$ Bounding, suppose
$\sigma \models \forall x \in A \; \exists y \; \phi(x, y), \; \phi$ a $\Delta_{0}$ formula.
Then in {\bf V} $\forall \tau \geq \sigma \; \forall x \in A_{\tau} \; \exists
\alpha \; \exists y \in {\bf M}_{\alpha} \; \tau \models \phi(x, y)$. By
$\Delta_{0}$ Bounding in {\bf V}, the $\alpha$s needed can be bounded, say by
$\beta$, and again by $\Delta_{0}$ Bounding in {\bf V} a bounding set B in
{\bf M}$_{\beta + 1}$ can be constructed.
\end{proof}

\begin{corollary}
{\bf M} $\models$ V=L.
\end{corollary}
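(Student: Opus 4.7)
The plan is to follow the template of Lemma \ref{L} essentially verbatim, since the current setup is structurally identical: we have a model of IKP (namely ${\bf M}_{big}$) and we are forming its inner model of constructible sets.

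First I would verify that ${\bf M} = L^{{\bf M}_{big}}$ satisfies IKP. Extensionality and Foundation are inherited because ${\bf M}$ is a definable transitive subclass of ${\bf M}_{big}$. Pairing, Union, Empty, and Infinity are witnessed inside the constructible hierarchy exactly as in Lemma \ref{L}: if $x \in L_\alpha$ and $y \in L_\beta$, then $\{x,y\} \in L_{(\alpha\cup\beta)+1}$; $\bigcup x \in L_{\alpha+1}$; and the internal $\overline{\omega}$ lives in $L_{\overline{\omega}+1}$. For $\Delta_0$ Comprehension, given $X \in L_\alpha$ and a $\Delta_0$ formula $\phi$ whose parameters lie in $L_\alpha$, the set $\{x \in X \mid \phi(x)\}$ is in $\mathrm{def}(L_\alpha) = L_{\alpha+1} \subseteq {\bf M}$. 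For $\Delta_0$ Bounding, given $\forall x \in X \; \exists y \in {\bf M}\; \phi(x,y)$, one re-expresses this inside ${\bf M}_{big}$ as $\forall x \in X \;\exists \alpha\;\exists y \in L_\alpha \;\phi(x,y)$, applies $\Sigma$ Bounding in ${\bf M}_{big}$ to get a bounding set $A$ of ordinals, and then $L_\beta \in {\bf M}$ for $\beta = A \cup \bigcup A$ serves as the desired bound.

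Next I would argue ${\bf M} \models V = L$, which by definition means $L^{\bf M} = {\bf M}$. This is the intuitionistic analogue of the classical fact $L^L = L$. The key input, already used in Lemma \ref{L} and proven in detail in \cite{me}, is that the operator $X \mapsto \mathrm{def}(X)$ is $\Delta_1$-definable in IKP. Consequently, the hierarchy $\alpha \mapsto L_\alpha$ is absolute between transitive models of IKP that share the same ordinals. Since ${\bf M} = L^{{\bf M}_{big}}$ contains all ordinals of ${\bf M}_{big}$ (ordinals are hereditarily constructible), $\mathrm{ORD}^{\bf M} = \mathrm{ORD}^{{\bf M}_{big}}$, and by absoluteness $L^{\bf M}_\alpha = L^{{\bf M}_{big}}_\alpha$ for every $\alpha \in \mathrm{ORD}^{\bf M}$. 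Hence
$$L^{\bf M} = \bigcup_{\alpha \in \mathrm{ORD}^{\bf M}} L^{\bf M}_\alpha = \bigcup_{\alpha \in \mathrm{ORD}^{{\bf M}_{big}}} L^{{\bf M}_{big}}_\alpha = L^{{\bf M}_{big}} = {\bf M}.$$

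The only place requiring mild care is the absoluteness step, and the worry there is purely about whether the $\Delta_1$-definability of $\mathrm{def}$ transfers smoothly from IZF (where it was originally developed in \cite{me}) to IKP. But this is exactly what Lemma \ref{L} already addressed in the previous section, observing that the witness to ``$Y = \mathrm{def}(X)$'' lives in $L_{\alpha_{aug}+7}$ and that $\alpha \mapsto \alpha_{aug} + 7$ is total in IKP. So no new obstacle arises, and the corollary reduces cleanly to the material already in hand.
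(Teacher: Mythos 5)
Your overall strategy is the right one and is essentially what the paper's one-line citation to \cite{me} stands in for: $L^{L}=L$, proved via the $\Delta_{1}$-definability, and hence absoluteness between transitive models of IKP, of $X \mapsto def(X)$. But one step would fail as written, and it is exactly the step where all the intuitionistic difficulty is concentrated: the claim that $ORD^{\bf M} = ORD^{{\bf M}_{big}}$ because ``ordinals are hereditarily constructible.'' Intuitionistically an ordinal is merely a transitive set of transitive sets, so every subset of $1 = \{0\}$ is an ordinal; in the present construction ${\bf M}_{big}$ is generated by a power-set-style operator over the Kripke partial order and therefore contains many subsets of $1$ (determined by arbitrary upward-closed subsets of $2^{<\omega}$) that are not definable over any $L_{\beta}$ and so do not lie in ${\bf M} = L^{{\bf M}_{big}}$. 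The two models emphatically do not share the same ordinals here, and the failure of this classical fact is a running theme of the paper: the ordinals $1_{\sigma}$, $\hat{T}_{0}$, $\xi$, $\alpha_{n}$ all have to be argued into $L$ by hand. For the same reason, even the auxiliary fact $\alpha \subseteq L_{\alpha}$ can fail for a non-classical ordinal $\alpha$ (an element $\{0 \mid \phi\}$ of $\alpha$ with $\phi$ not equivalent to anything definable over any $L_{\delta}$ never enters $L_{\alpha}$), so one cannot simply define ordinals of ${\bf M}$ ``as the set of ordinals in $L_{\alpha}$'' and expect to recover $\alpha$.

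What actually has to be shown is weaker but still nontrivial: for every ordinal $\alpha$ of ${\bf M}_{big}$ there is an ordinal $\gamma \in {\bf M}$ with $L_{\alpha} \subseteq L_{\gamma}$, so that the absoluteness you invoke (correctly) can be applied to $\gamma$ rather than to $\alpha$. Devices of the form $\gamma = \{\beta \in L_{\alpha} \mid def(L_{\beta}) \subseteq L_{\alpha}\}$ (used in lemma \ref{local L lemma}) or $\gamma = \{\beta \in L_{\alpha} \mid \beta$ is an ordinal$\}$ give constructible ordinals with $L_{\gamma} \subseteq L_{\alpha}$ for free, but the reverse inclusion requires knowing that enough of $\alpha$ is already captured inside $L_{\alpha}$, which in the paper is verified separately for each specific ordinal in play. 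Establishing this uniformly for all ordinals of a model of IKP is precisely the content of ``the basic result of \cite{me},'' which is why the paper cites it rather than reproving it. So your argument is the right shape, but the sentence doing the real work is a classical fact that is false in this setting; the corollary should either lean on \cite{me} as the paper does, or supply the matching-ordinal lemma explicitly. The remainder of your proposal (the verification that ${\bf M} \models$ IKP, which the paper defers to the following lemma, and the appeal to the $\Delta_{1}$-definability of $def$) is fine.
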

\begin{proof}
This is the basic result of \cite{me}.
\end{proof}

\begin{lemma}
{\bf M} $\models$ IKP + $\Pi_{2}$ Reflection.
\end{lemma}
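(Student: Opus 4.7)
My plan is to split the proof into two parts: first verify IKP, then establish $\Pi_2$ Reflection, essentially by Kripke--ifying the classical argument for case 3 of section 3.

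For IKP, since ${\bf M}_{big} \models$ IKP by the previous lemma and ${\bf M} = L^{{\bf M}_{big}}$, the proof is exactly that of lemma \ref{L} of the previous section. The essential ingredients are that $\alpha \mapsto L_\alpha$ is $\Delta_1$-definable and provably total in IKP, that the $L$-hierarchy is extensional and end--extending, that $\Delta_0$ formulas are absolute for this hierarchy, and that $\Sigma$ Bounding in the ambient ${\bf M}_{big}$ delivers $\Delta_0$ Bounding in the inner model. Nothing specific to the present construction changes this.

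For $\Pi_2$ Reflection, suppose $\sigma \models \forall x \; \exists y \; \phi(x,y)$ with $\phi \in \Delta_{0}$, and let $A \in {\bf M}_\sigma$. Choose a finite $H_{fin} \subseteq H$ containing every real of $H$ used in the construction of $A$, the parameters of $\phi$, and the transition data of $A$ on nodes $\geq \sigma$. Let $W = L_{\omega_1^{CK}}[G \cup H_{fin}]$, which is admissible classically and hence satisfies classical $\Pi_2$ Reflection. Let ${\bf M}^W$ denote the Kripke substructure of ${\bf M}$ obtained by running the same $\Sigma_1$-recursion, but restricted to parameters available in $W$; this is $\Sigma_1$-definable inside $W$.

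The crucial claim, where the real work lies, is that the hypothesis transfers: viewed inside $W$, ${\bf M}^W \models$ ``$\forall x \; \exists y \; \phi(x,y)$'' at $\sigma$. This is the intuitionistic analogue of the symmetric-submodel argument of the classical cases 2 and 3. Because the reals of $H \setminus H_{fin}$ are Cohen-generic over $W$, any Kripke-witness $y$ that appears at some $\tau \geq \sigma$ in the full ${\bf M}$ can be replaced by one already constructible in $W$ via the forcing-name argument of case 2: find a condition $p \mid\!\vdash \phi(x, \mathring y)$, build a generic through $p$ inside $W$ using countability of $L_\alpha[G, H_{fin}]$, and read off the interpretation of $\mathring y$. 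The $\Delta_0$-absoluteness of $\phi$ and the end-extension structure of the Kripke hierarchy ensure that $\tau \models \phi(x, y)$ is preserved under this replacement.

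Having transferred the hypothesis into $W$, I would then apply classical $\Pi_2$ Reflection inside the admissible $W$ to the $\Delta_0$ translation of ``$\tau \models \phi(x,y)$'' over a large enough set containing $A$, producing a set $B \in W$ with $A \subseteq B$ and the expected reflection property. Reading $B$ as a Kripke element at $\sigma$ in ${\bf M}$ (which lies in ${\bf M}_\sigma$ because ${\bf M}_{big}$ absorbs $W$-definable Kripke sets, and is forced into $L^{{\bf M}_{big}}$ by the same definability argument used in the previous subsection), $\Delta_0$ absoluteness then yields $\sigma \models A \subseteq B \wedge \forall x \in B \; \exists y \in B \; \phi(x,y)$. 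The main obstacle is the transfer step: making rigorous the Kripke analogue of generic absorption, i.e.\ verifying that the forcing-absoluteness argument of case 2 genuinely carries over when the ``witness'' $y$ is a Kripke object rather than a plain element of a forcing extension.
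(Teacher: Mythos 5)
Your overall strategy is the paper's: dispose of IKP as in the $V=L$ lemma of the earlier subsection, and for $\Pi_2$ Reflection pass to the ambient universe, cut down to $W=L_{\omega_1^{CK}}[G\cup H_{fin}]$ by the symmetry/genericity argument of classical case 2, and use admissibility of $W$ to bound witnesses. But the two places where your plan leans on a black box are exactly where the paper has to work, and as written they do not go through. First, you cannot simply "apply classical $\Pi_2$ Reflection inside $W$ to the $\Delta_0$ translation of $\tau\models\phi(x,y)$": the statement that needs reflecting has its quantifiers relativized to elements of ${\bf M}=L^{{\bf M}_{big}}$, and "$x$ is (a code for) an element of ${\bf M}$" is only $\Sigma_1$ over $W$ — the ${\bf M}_\gamma$-hierarchy is generated by a $\Sigma_1$ recursion and is not a set in $W$, and membership in the internal $L$ needs a further $\Sigma_1$ witness. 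So the relativized hypothesis is not $\Pi_2$ over $W$. The paper accordingly never invokes classical $\Pi_2$ Reflection; it runs an explicit $\omega$-fold iteration of $\Sigma$ Bounding, at each stage bounding both the witnesses $y$ and "the relevant evidence that the witness is in ${\bf M}$", and takes the union $\alpha=\bigcup_n\alpha_n$.

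Second, the reflecting set must be a member of ${\bf M}=L^{{\bf M}_{big}}$, not merely a Kripke set of ${\bf M}_{big}$. A $B$ handed to you by a reflection principle in $W$ is an arbitrary classical set, and your parenthetical justification ("${\bf M}_{big}$ absorbs $W$-definable Kripke sets, and is forced into $L^{{\bf M}_{big}}$ by the same definability argument") is not enough: arbitrary Kripke sets of ${\bf M}_{big}$ are precisely the ones that need \emph{not} lie in the internal constructible hierarchy. The paper avoids the issue by choosing the reflecting set canonically: it lets $\beta$ be the set of ${\bf M}_{big}$-ordinals definable over $L_{\alpha}[G\cup H_{fin}]$ and reflects to $L_\beta$, an initial segment of the internal $L$, hence automatically in ${\bf M}$. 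By contrast, the "transfer step" you single out as the main obstacle is the part the paper dispatches with "by standard symmetry arguments" (the details being those of classical case 2), and your sketch of that step is fine. So the gap is not where you think it is: you need to replace the appeal to classical $\Pi_2$ Reflection by an iterated $\Sigma$-bounding closure, and you need to exhibit the reflecting set as an internal $L_\beta$.
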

\begin{proof}
The minor axioms of IKP are all easy. (For Extensionality, use the fact that
{\bf M}
is a transitive sub-class of the extensional {\bf M}$_{big}$. For Foundation,
use
the definability of {\bf M} inside of the well-founded {\bf M}$_{big}$.)
$\Delta_{0}$ Comprehension follows from the definition of L and the
absoluteness of $\Delta_{0}$ formulas, just as in the classical case.
$\Delta_{0}$ Bounding follows from $\Pi_{2}$ Reflection. For
$\Pi_{2}$
Reflection, suppose {\bf M} $\models$ $\forall x \; \exists y \; \phi (x, y),
 \; \phi$ a $\Delta_{0}$ formula, and
A is a set in {\bf M}. We
will enlarge A to a transitive model of the same sentence. Work in the ambient
universe {\bf V}. All
of the parameters in sight (i.e. A and those of $\phi$) are members of
L$_{\omega_{1}^{CK}}$[G $\cup$ H$_{fin}$] for a fixed H$_{fin}$. Moreover, for
any
$\Delta_{0}$({\bf M}) formula $\phi$, there is an equivalent $\Delta_{0}$({\bf V}) formula
$\phi^{\ast}$, exactly as in \ref{IKP}. By standard symmetry
arguments, no more members of H are needed to find witnesses y in {\bf V}.
So for each x in
A let $\alpha_{x}$ be an ordinal such that a witness for x is in L$_{\alpha_{x}}$[G
$\cup$ H$_{fin}$].
Moreover, L$_{\alpha_{x}}$[G $\cup$ H$_{fin}$] should also include all the
relevant
evidence that the witness is in {\bf M}. Using the admissibility of the ambient
universe, let
$\alpha_{0}$ bound the $\alpha_{x}$'s needed. Of course, this must be iterated
to account
for the new sets in L$_{\alpha_{0}}$[G $\cup$ H$_{fin}$] which are in L$^{\bf
M}$ (via an
ordinal also in L$_{\alpha_{0}}$[G $\cup$ H$_{fin}$]). Iterate $\omega$-many
times. Let
$\alpha$ be $\bigcup \alpha_{n}$, and $\beta$ be \{x $\mid$ x is an ordinal in
{\bf
M}$_{big}\}$ over L$_{\alpha}$[G $\cup$ H$_{fin}$]. L$_{\beta}$ reflects $\phi$.
\end{proof}

\begin{lemma}
{\bf M} $\models$ $\neg \Sigma_{1}$DC.
\end{lemma}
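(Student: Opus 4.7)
The plan is to adapt the proof of Lemma \ref{not-sig} to the present setting. Let $\phi(X, Y)$ be the $\Delta_0$ formula (with parameter $\hat{P}$): ``$X$ is a finite sequence of pairwise distinct branches through $\hat{P}$, and $Y$ is a proper end-extension of $X$ by another such branch." The set $\hat{P}$ is a Kripke set in {\bf M}, being hereditarily definable from the $1_{\sigma}$'s, each of which is a Kripke ordinal in {\bf M}. For each Cohen generic $G_i \in G$, the associated branch $B_i = B(G_i)$ sits in ${\bf M}_{big}$ and, I would argue, descends into ${\bf M} = L^{{\bf M}_{big}}$, since each $B_i$ is a subset of $\hat{P}$ captured at a low level of the Kripke $L$-hierarchy; infinitely many pairwise distinct such branches are then available to extend any finite tuple, so $\forall X \, \exists Y \, \phi(X, Y)$ holds in {\bf M}.

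Now suppose for contradiction that $\Sigma_1$ DC held in {\bf M}, giving $f : \omega \to {\bf M}$ with $f(0) = \emptyset$ and $\phi(f(n), f(n+1))$ for all $n$. Concatenating the $f(n)$'s yields an internal $\omega$-sequence of pairwise distinct branches through $\hat{P}$ in {\bf M}, hence in ${\bf M}_{big}$, hence in the ambient universe $V = \bigcup_{H_{fin}} L_{\omega_1^{CK}}[G \cup H_{fin}]$. By Lemma \ref{externalization lemma}, each branch corresponds at the bottom node $\bot$ to an external branch through $2^{<\omega}$, i.e., a real in $V$. So we obtain in $V$ an $\omega$-sequence of pairwise distinct reals. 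Any such sequence lives in $L_{\omega_1^{CK}}[G \cup H_{fin}]$ for a single fixed $H_{fin}$; applying the standard symmetric-submodel automorphism argument to the Cohen forcing generating $G$ (as in section 3 item 1) rules out any enumeration singling out infinitely many distinct $G_i$'s, and since only finitely many reals are definable from $H_{fin}$ over $L_{\omega_1^{CK}}$, this yields the contradiction.

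The main obstacle is the first direction: verifying that the $B_i$'s (or suitable substitutes) are genuinely present in {\bf M}, which was built as $L^{{\bf M}_{big}}$ starting from the empty structure, rather than, as in section \ref{section-not-DC}, from $L_{\xi}$ containing the $B_i$'s by fiat. If the direct inclusion of $B_i$'s into {\bf M} proves delicate, $\phi$ should be reformulated to quantify over an intrinsically constructible family of branches --- for instance, Kripke branches determined by the generic ordinals $1_{\sigma}$ together with classical constructible reals --- so long as any $\omega$-enumeration of them still descends at $\bot$ to an $\omega$-sequence of distinct reals in $V$ precluded by the symmetric construction.
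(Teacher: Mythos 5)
There is a genuine gap, and it lies in your choice of the relation $\phi$. You let $\phi(X,Y)$ quantify over \emph{arbitrary} branches through $\hat{P}$, and accordingly the contradiction you reach at the end is ``an $\omega$-sequence of pairwise distinct reals in ${\bf V}$.'' But that is not a contradiction: the ambient universe $\bigcup_{H_{fin}} L_{\omega_{1}^{CK}}[G \cup H_{fin}]$ contains $L_{\omega_{1}^{CK}}$ and hence plenty of $\omega$-sequences of distinct (e.g.\ recursive) reals; what the symmetric-submodel argument forbids is only an $\omega$-sequence of distinct elements \emph{of $G$}. (Your parenthetical claim that ``only finitely many reals are definable from $H_{fin}$ over $L_{\omega_{1}^{CK}}$'' is false --- every constructible real is so definable --- and is a symptom of this same conflation.) Moreover, in the present model the collection of all branches through $\hat{P}$ really is too big for your purposes: ${\bf M}_{big}$ is built by iterating the \emph{full} power set, so $B(R)_{0}$ is a Kripke ordinal of ${\bf M}_{big}$ for \emph{every} real $R$ of the ambient universe, and since $L$ contains all ordinals, ${\bf M} = L^{{\bf M}_{big}}$ contains $B(R)$ for every such $R$, constructible reals included. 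So ``branch through $\hat{P}$'' does not single out the generic branches, and your final step collapses.

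The repair --- which is what the paper does, simply citing lemma \ref{not-sig} --- is to make the designated set $\{B(R) \mid R \in G\}$ a parameter of $\phi$: ``if $X$ is an $n$-tuple of distinct elements of $\{B(R) \mid R \in G\}$, then $Y$ is an end-extension of $X$, also of distinct elements of that set.'' This set is a set of ${\bf M}$: each $B(R)_{0}$ is an ordinal of ${\bf M}_{big}$, the ordinal $\xi$ of corollary \ref{subsets} applied to $\{B(R) \mid R \in G\} \cup \{\hat{P}\}$ lies in ${\bf M}_{big}$ because $G$ is a set of the ambient universe, and the family is then definable over $L_{\xi}$ as in the definability lemma of section \ref{section-not-DC}. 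This also disposes of the legitimate worry in your last paragraph; but your proposed fallback of passing to branches coded by ``classical constructible reals'' would make matters worse, since an $\omega$-enumeration of those is certainly available in ${\bf M}$ and contradicts nothing. With the corrected $\phi$, a DC-witness externalizes at $\bot$ to an $\omega$-sequence of distinct elements of $G$ in ${\bf V}$, and that --- not merely a sequence of distinct reals --- is what the symmetric argument (unaffected by adjoining finitely many members of $H$, by mutual genericity) rules out.
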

\begin{proof}
As in \ref{not-sig}, there can be no sequence of distinct members of \{B(R)
$\mid R \in$
G\}, which
is a set in L.
\end{proof}

\begin{lemma}
{\bf M} $\models \neg$Resolvability.
\end{lemma}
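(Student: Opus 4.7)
The plan is to imitate the classical non-Resolvability argument from item~3 of the Classical Friends section, showing that any purported resolution is trapped inside a proper sub-universe of $\mathbf{M}$. Suppose, toward contradiction, that $f$ is a $\Delta_{1}$-definable function in $\mathbf{M}$ whose domain $O \subseteq ORD^{\mathbf{M}}$ is a $\Delta_{1}$-definable linearly ordered set of ordinals, and that $\bigcup \, \mbox{rng}(f) = \mathbf{M}$.

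First I would locate the parameters of the $\Delta_{1}$ definitions of $f$ and $O$. Since $\mathbf{M} = L^{\mathbf{M}_{big}}$ and $\mathbf{M}_{big}$ is constructed by a $\Sigma_{1}$ induction inside the ambient universe $\mathbf{V} = \bigcup_{H_{fin}} L_{\omega_{1}^{CK}}[G \cup H_{fin}]$, each parameter already lies in some fixed $L_{\omega_{1}^{CK}}[G \cup H_{fin_{0}}]$. Using the translation of $\Delta_{0}(\mathbf{M})$-formulas to $\Delta_{0}(\mathbf{V})$-formulas employed in the preceding lemma, $f$ and $O$ become $\Sigma_{1}(\mathbf{V})$ objects with all parameters in $L_{\omega_{1}^{CK}}[G \cup H_{fin_{0}}]$.

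Second, a standard Cohen symmetry argument should show that $\bigcup \, \mbox{rng}(f) \subseteq L_{\omega_{1}^{CK}}[G \cup H_{fin_{0}}]$. Any permutation of $H \setminus H_{fin_{0}}$ lifts to an automorphism of the ambient forcing that fixes every parameter of $f$ and $O$ pointwise, hence fixes $f$ and $O$ setwise. If such a $\pi$ moved some $\alpha \in O$ to a distinct $\pi(\alpha) \in O$, then $\alpha, \pi(\alpha), \pi^{2}(\alpha), \ldots$ would form a nontrivial finite cycle inside the linearly ordered $O$, which is impossible. Hence each $\alpha \in O$ is fixed by every such $\pi$, and so is each $f(\alpha)$; a standard symmetric-model argument then places $f(\alpha)$ inside $L_{\omega_{1}^{CK}}[G \cup H_{fin_{0}}]$.

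Third, I would exhibit a set of $\mathbf{M}$ outside this bound. Pick any $R \in H \setminus H_{fin_{0}}$ and form the Kripke branch $B(R)$ as in section~\ref{general}. Since $R$ can be internalized as a Kripke real in $\mathbf{M}_{big}$ and $B(R)$ is definable from $R$ and $\hat{P}$, one has $B(R) \in L^{\mathbf{M}_{big}} = \mathbf{M}$; but no representative of $B(R)$ lies in $L_{\omega_{1}^{CK}}[G \cup H_{fin_{0}}]$, since $R$ itself is needed to build it. This contradicts $\bigcup \, \mbox{rng}(f) = \mathbf{M}$. The main obstacle is the second step, specifically the verification that ordinals in a symmetric, linearly ordered, $\Delta_{1}$-definable $O$ are themselves fixed by the Cohen automorphisms; this is what makes the linearity and $\Delta_{1}$-definability hypotheses on $O$ indispensable. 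Once that is in hand, the rest is a routine recapitulation of the $\Pi_{2}$ Reflection proof above together with the classical argument of item~3.
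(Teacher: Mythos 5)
Your overall strategy --- exploit permutations of $H$ that fix the finitely many parameters of the resolution to show that $f$ cannot coherently distribute the branches $B(R)$, $R \in H$, among the sets $f(\alpha)$ --- is the same one the paper uses, and your device for pinning down the ordinals in the domain is a genuinely different and attractive alternative. Where the paper externalizes $\mbox{dom}(f)$ as a \emph{well-founded} linear order in ${\bf U}[H]$ and works with the \emph{least} $\alpha$ whose $f(\alpha)$ catches a new branch (so that $\alpha$ is automatically fixed by the relevant transposition, being uniquely characterized by a symmetric property), you observe that a finitely supported permutation induces an order-automorphism of finite order on the externalized $O$, and such an automorphism can have no nontrivial finite orbit in a linear order. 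That observation is sound, provided it is run at the level of names and the forcing relation: a permutation of the indices of $H$ acts on names, not on the elements of ${\bf U}[H]$, so ``$\pi(\alpha)$ is a distinct element of $O$'' must be extracted from statements $r \mid\vdash \ldots$ for $r$ below both $q$ and $\pi(q)$, and the fresh index must be chosen outside $\mbox{supp}(q)$ so that $q$ and $\pi(q)$ are compatible. You flag this as the main obstacle, but it does go through; it is not where the real problem lies.

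The genuine gap is the step you dispatch as ``a standard symmetric-model argument then places $f(\alpha)$ inside $L_{\omega_{1}^{CK}}[G \cup H_{fin_{0}}]$.'' Being fixed setwise by every permutation fixing $H_{fin_{0}}$ pointwise does \emph{not}, as a general fact, put a set into the corresponding sub-universe: the set of all adjoined Cohen reals in the basic symmetric model is invariant under every permutation yet lies in no extension generated by finitely many of them. To close the gap you need precisely the extra argument the paper supplies at this point: if some $f(\alpha)$ contained a single branch $B(H_{j})$ with $j$ outside the relevant support, then applying the transpositions interchanging $j$ with fresh indices $k$, together with a pre-density argument guaranteeing that the combined conditions meet the generic, forces $f(\alpha)$ to contain $B(H_{k})$ for infinitely many distinct $k$. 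That is the contradiction, because $f(\alpha)$ is a \emph{set} of ${\bf M}$ and hence lies in some $L_{\omega_{1}^{CK}}[G \cup H_{fin}]$ with $H_{fin}$ finite, which by mutual genericity can see only finitely many of the $B(H_{k})$'s. (One could instead invoke an intersection lemma for mutually generic extensions, but that too must be proved in this admissible-set setting, not merely cited as standard.) Once this is done, your third step is superfluous: the contradiction is already in hand without exhibiting a particular $B(R)$ outside $\bigcup \mbox{rng}(f)$.
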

\begin{proof}
This argument is somewhat technical, but since the idea is rather simple we will
give a quick overview of it. Suppose there were such a resolution. Since {\bf M} 
is definable in the ambient universe,
so is this resolution. Any definition can use only finitely many parameters, in
particular
only finitely many members of H. Consider branches of the form B(R), R $\in$ H.
Which
ordinal each gets attached to is determined by some forcing condition. Furthermore,
beyond the finitely many parameters, such a forcing
condition need mention only (that is, is an initial segment of) R. By mutual
genericity, for every initial segment of a real there are
infinitely many R's that share that initial segment. So any forcing condition attaching
B(R) to an ordinal forces infinitely many different B(R)'s to that same ordinal.
Therefore there is a set which contains infinitely many
B(R)'s, R $\in$ H, a contradiction.

The following proof makes essential use of the definability of the forcing relation
``p $\mid \vdash \phi$" over the ground model L$_{\omega_{1}^{CK}}$[G] for the forcing partial
order to get the set H. The forcing partial order is \{ p $\mid \; p : \omega \rightarrow
2^{<\omega},$ supp(p) finite\}, where supp(p), the support of p, is \{ n $\in \omega \mid
p(n) \not = \emptyset$\}. The resulting generic is (trivially identifiable with) a 
function f$_{H}$ with domain $\omega$ and range H. We will assume that the forcing
language has a constant H$_{n}$ for each $n \in \omega$, which is interpreted in the
generic extension as f$_{H}(n)$, as well as constants for f$_{H}$ itself and for 
the range of f$_{H}$, for which we
will ambiguously use the symbols f$_{H}$ and H respectively (f$_{H}$ and H being also 
sets in the  generic extension; in effect,
the constant H will be interpreted as the set H and similarly for f$_{H}$; whether
the constant symbol or its interpretation is meant every time we say ``H" of ``f$_{H}$"
should be clear from the context). To
name the elements in the generic extenion
L$_{\omega_{1}^{CK}}$[G, H] it suffices to consider the language
generated by the symbols H and the H$_{n}$'s; f$_{H}$ is not necessary. To name the 
elements in {\bf M} it suffices to consider the language generated by the H$_{n}$'s; 
H is not necessary. (Recall the definition of {\bf V}, over which {\bf M} was defined.)

{\bf M} is a structure definable over L$_{\omega_{1}^{CK}}$[G, H], and so the truth predicate
over {\bf M}, ``$\sigma \models \phi$", is also definable over L$_{\omega_{1}^{CK}}$[G, H].
It follows that the forcing relation p $\mid \vdash ``\sigma \models \phi"$ is definable 
over 
L$_{\omega_{1}^{CK}}$[G].
If the reader feels uncomfortable with the indirectness hidden behind the
simple notation ({\bf M} being a definable sub-class of {\bf M}$_{big}$, itself
a union not of sets but of definable classes), then to save the trouble of the
detailed argument needed to prove the definability of the forcing relation, 
with just a tad more set-theoretic power we can take the ground model to be 
L$_{\omega_{2}^{CK}}$[G], $\omega_{2}^{CK}$ the first admissible ordinal after
$\omega_{1}^{CK}$. (Our claim is, ultimately, that the results here are theorems
of ZF.) In L$_{\omega_{2}^{CK}}$[G, H] {\bf M} is a set, so the forcing 
relation  p $\mid \vdash ``\sigma \models \phi"$ for statements about {\bf M} 
is definable, $\Delta_{1}$ even. So take the ground model {\bf U} to be L$_{\omega_{1}^{CK}}$[G]
or L$_{\omega_{2}^{CK}}$[G], whichever you feel more comfortable with.

Suppose for a contradiction that Resolvability were to hold in {\bf M}:
$\sigma \models$ ``f is a resolution". Let p $\in$ f$_{H}$ force such:
p $\mid \vdash ``\sigma \models$ f is a resolution". Let supp(f) be \{ n $\mid
\;$ H$_{n}$ occurs in the definition of f\}. f mentions only finitely many paramters, 
each of which is ultimately evaluated 
in L$_{\omega_{1}^{CK}}$[G $\cup$ H$_{fin}$] for some finite H$_{fin} \subseteq$ H.
Hence supp(f) is finite, and without loss of generality supp(f) $\cup$ supp($\sigma$) 
$\subseteq$ supp(p). 

Not only does {\bf M} satisfy the Axiom of Foundation, it is even well-founded in the sense
of {\bf U}[H]; that is, the relation $(\sigma, x) <
(\tau, y)$ if ($\sigma \geq \tau \wedge \sigma \models ``x \in y"$) is 
well-founded in {\bf U}[H]. It would be shown inductively on $\gamma$ that each 
{\bf M}$_{\gamma}$ has this property in {\bf U}[H], hence {\bf M}$_{big}$ too as the union of
the {\bf M}$_{\gamma}$'s, and finally {\bf M} as a submodel of {\bf M}$_{big}$. Therefore,
not only do we have that $\sigma \models$ ``dom(f) is a well-founded linear order",
but also \{$\; \alpha \; \mid \; \sigma \models ``\alpha \in dom(f)"$\} (a collection in
{\bf U}[H] of Kripke sets) is also a well-founded linear order (with the order relation
being that induced by {\bf M}). We will refer to this set as ext$_{\sigma}$(dom f), the
externalization of the domain of f as $\sigma$.

So let q $\leq$ p, q $\in$ f$_{H}$, $\alpha$, and H$_{i}$ be such that 

q $\mid \vdash ``[\alpha$ is the least member of ext$_{\sigma}$(dom f)
such that for some X $\in$ H 

$\; 
\sigma \models (\bigwedge_{n \in supp(p)} B(H_{n}) \not = B(X) \; \wedge \; B(X) \in
f(\alpha)$)] 

$\wedge \; [\sigma \models (\bigwedge_{n \in supp(p)} B(H_{n}) \not = B(H_{i}) \; \wedge \;
B(H_{i}) \in f(\alpha))]$". 

Without loss of generality supp($\alpha$) $\cup \{i\} \subseteq$ supp(q). 
Consider j $\not \in$ supp(q). Let $\rho_{j}$ be the permutation of $\omega$ which
interchanges i and j. $\rho_{j}$ can be extended to a permutation of terms in the forcing 
language which interchanges all occurrences of H$_{i}$ with H$_{j}$. This permutation
we denote also as $\rho_{j}$. Such a permutation on terms induces a permutation on 
sentences (by merely permuting the parameters), and this permutation is truth-preserving
(for the language describing {\bf U}[H], i.e. without f$_{H}$):
$$r \mid \vdash \phi \; \leftrightarrow \; r \circ \rho_{j} \mid \vdash \rho_{j}(\phi).
$$
(For a development of focing and symmetric models, see \cite{JechST} for instance.) The
sentence forced by q above is in this restricted language, hence

q $\circ \rho_{j} \mid \vdash ``[\rho_{j}(\alpha)$ is the least member of 
ext$_{\sigma}$(dom f) such that 

for some X $\in$ H $\; \sigma \models (\bigwedge_{n \in supp(p)} B(H_{n}) \not = B(X) 
\; \wedge \; B(X) \in f(\rho_{j}(\alpha))$)] 

$\wedge \; [\sigma \models (\bigwedge_{n \in supp(p)} B(H_{n}) \not = B(H_{j}) \; \wedge \;
B(H_{j}) \in f(\rho_{j}(\alpha))]$". 

(Of course we are using the fact that $\rho_{j}(f) = f$.)

Let r $\leq$ q, q$\circ \rho_{j}$. Since r forces that both $\alpha$ and $\rho_{j}(\alpha)$
are the least {\bf M}-ordinals with a certain property, r $\mid \vdash \alpha = \rho_{j}(\alpha)$.
Hence 
r $\mid \vdash ``B(H_{i}), B(H_{j})
\in f(\alpha)"$. 

Let J $\subseteq \omega \backslash$ supp(q), $\mid$J$\mid$ = n finite. Let q $\circ 
\rho_{J}$ be \{ q $\circ \rho_{j} \mid$ j $\in$ J \} and r$_{J}$ be inf(q $\circ 
\rho_{J} \; \cup$ \{q\}). Then r$_{J} \mid \vdash ``B(H_{i}) \in f(\alpha)\; \wedge \; 
\bigwedge_{j \in J} B(H_{j}) \in f(\alpha)"$. The set R$_{n}$ = \{ r$_{J} \mid$ 
J $\subseteq \omega \backslash$ supp(q), $\mid$J$\mid$ = n \} is definable and pre-dense
in the forcing partial order, so f$_{H} \cap$ R$_{n} \not = \emptyset$. This means that 
f($\alpha$) contains at least n+1-many distinct Kripke sets of the form B(H$_{j}$). Since
this holds for all n, f($\alpha$) contains infinitely many such Kripke sets. This, though,
is a contradiction. According to the definition of a resolution, $\sigma \models$
``f($\alpha$) is a set", and each Kripke set in {\bf M} is in 
L$_{\omega_{1}^{CK}}$[G $\cup$ H$_{fin}$] for some finite H$_{fin}$. 
\end{proof}

\subsection{IKP + V=L + $\neg \Pi_{2}$-Reflection} \label{not Pi_2}

This construction is weaker than the others in that the model produced will
not satisfy $\neg \Pi_{2}$-Reflection, but merely will not satisfy
$\Pi_{2}$-Reflection. In fact it will be a model of $\neg \neg
\Pi_{2}$-Reflection. This suffices to show that IKP + V=L does not prove
$\Pi_{2}$-Reflection, but it is still open whether IKP + V=L proves $\neg \neg
\Pi_{2}$-Reflection.

The partial order of this model will be $\omega$-many copies of 2$^{<\omega}$
next to each other, with two consecutive bottom elements. That is, there is
a first node $\bot \bot$, followed by another node $\bot$, itself followed
by $\omega$-many incompatible nodes $\bot_{n}$, each of which is the bottom
node of a copy of 2$^{<\omega}$. We will really be working with the subtree
above $\bot$; the role of bottom is merely that $\bot \models ``1_{\bot \bot
} = 1$'', i.e. $\bot \models ``1 \in \hat{P}$''. Let $\hat{P}_{n}$ be the
nth subtree, internally: [$\sigma \models y \in$ $\hat{P}_{n}$]
$\leftrightarrow$
[$\exists \tau$ compatible with $\bot_{n} \;\; \sigma \models y = 1_{\tau}$];
notice that for $\sigma$ incompatible with $\bot_{n}$, $\sigma \models
\hat{P}_{n} = \{1\}$. (By way of notation, let the numbering of these trees
start with 1 instead of 0, because 0 as a subscript is already in use, to
indicate including 0 as a member.)

We need the $\omega$-sequence $< \hat{P}_{n} \mid n \geq 1>$ in the model.
To this end, let
$\alpha_{n}$ be $\hat{P}_{n} \; \cup \overline{n+1}$, where, as before,
$\overline{n+1}$
is the internalization of n+1. Notice that $\alpha_{n}$ is an ordinal, and
L$_{\alpha_{n}}$ = $\hat{P}_{n} \; \cup$ L$_{\overline{n+1}}$ (using lemma \ref{T-hat-0}
here). Let $\alpha$
be $\hat{P}$  $\cup
\; \overline{\omega} \cup
\{T_{n0} \mid n \geq 1\} \cup \{\alpha_{n} \mid n \geq 1\}$. So

$$
L_{\alpha} = \hat{P} \cup L_{\overline{\omega}} \cup \bigcup \{def(T_{n0})
\mid n \geq 1\} \cup
\bigcup \{def(\hat{P}_{n} \cup
L_{n+1}) \mid n \geq 1\}.$$
Over L$_{\alpha}$ let $\phi$(x, y) iff
\begin{eqnarray*}
0 \neq x \in \overline{\omega} &\wedge& 0 \neq y \subseteq 1 \\
&\wedge& \exists z (x, y \in z) \\
&\wedge& \forall z (x+1 \in z \wedge y \in z \rightarrow y = 1).
\end{eqnarray*}
We claim that $\forall x \; \bot \models ``\forall y \;
\phi(x, y) \leftrightarrow y \in \hat{P}_{x}$''. To see this, go through
the members of L$_{\alpha}$.
There is no z in $\hat{P}$ with a non-0 member. In L$_{\overline{\omega}}$
the only possible y (i.e. the only non-0 subset of 1) is 1 itself, and
indeed for all x $\geq$ 1 $\phi$(x, 1); since $\bot \models
1 \in \hat{P}_{x}$, this is fine. The only members of $\overline{\omega}$
also in $\hat{P}_{n0}$ are 0 and 1, so if
z $\in$ def($\hat{P}_{n0}$) has a legal x as a member then x = 1. In fact
this is subsumed by the
final case: the common members of $\overline{\omega}$ and ($\hat{P}_{n} \;
\cup$
L$_{\overline{n+1}}$) are 0, 1, ...
, $\overline{n}$; the non-0 subsets of 1 are exactly the members of T$_{n}$; so
the appropriate $<x, y>$
candidates are $<m, y>$, y $\in$ $\hat{P}_{n}$ and m$\leq$n. If m$<$n and
$\neg \sigma
\models y=1$ then \{m+1, y\} shows that $\neg \sigma \models \phi(m, y)$. Hence
the only
time $\sigma \models \phi(x, y)$ is when $\sigma \models y \in \hat{P}_{x}$.

Keeping in mind that $\hat{P}_{x} \in L_{\alpha}$, the functional relation
``f($\overline{n}$) = $\hat{P}_{\overline{n}}$'' (as a
two-place relation $\psi(\overline{n}, Z)$) can be defined over L$_{\alpha}$ as
``$\forall y \; y \in Z \leftrightarrow \phi(\overline{n}, y)$''. There is
a small integer m such that this function f is actually a
member of L$_{\alpha + \overline{m}}$. (L$_{\alpha}$ is available as a
parameter
already in L$_{\alpha + 1}$, and all else you need are some ordered pairs.)

Next comes an extension of the notion of a branch to include a branch through
$\hat{P}_{n}$. If
Q $\subseteq \hat{P}$ is closed downwards (i.e. closed under taking
supersets), then by  requiring in the definition of a branch that B
$\subseteq$ Q and restricting the $\gamma$'s
in clause (3) to Q, we get a branch through Q. The proof of lemma
\ref{externalization lemma} goes through with Q in place of $\hat{P}$. In
the case at hand, any branch through
$\hat{P}_{n}$, externally at some $\sigma$ compatible with $\bot_{n}$, is a
branch through $\hat{P}_{\sigma}$, and externally at some $\sigma$
incompatible with $\bot_{n}$ is just \{1\}. Let R be any real, and B$_{n}$
the corresponding branch through $\hat{P}_{n}$.

Let W be a recursive ordering of $\omega$ which is not well-founded but which
has no hyperarithmetic infinite descending sequence. For the convenience of the reader, 
we review some basic facts about W which will be used.

For every i there is a W-least j $\leq_{W}$ i such that the W-interval [j,i]$_{W}$
has order-type some $\alpha < \omega_{1}^{CK}$. To see this, work in 
L$_{\omega_{1}^{CK}}$.
Define f(0) = $\langle i,0,0 \rangle$, f(n+1) = the L-least triple $\langle j,\alpha,g
\rangle$ such that $j <_{W}$ proj$_{0}$(f(n)) and g is an order-isomorphism between
[j,i]$_{W}$ and the ordinal $\alpha$. Set $j_{n}$ = proj$_{0}$(f(n)). f is a 
$\Sigma_{1}$-definable function on an initial segment of $\omega$. By the admissibility 
of L$_{\omega_{1}^{CK}}$, f $\in$ L$_{\omega_{1}^{CK}}$. If dom f = $\omega$ then
i = $j_{0} >_{W} j_{1} >_{W} ...$ would be a hyperarithmetic infinite descending 
sequence, in contrast to the choice of W. Let n = max(dom f). Then [$j_{n}$,i]$_{W}$ is
isomorphic to some $\alpha \in \omega_{1}^{CK}$, but for all j $< j_{n}$ [j,i]$_{W}$
is not well-founded. Call this $j_{n}$ corresponding to a given i 0$_{i}$ (``0 in the
sense of i").

In the other direction, $\forall i \in \omega, \; \alpha \in \omega_{1}^{CK} \; \exists 
j_{\alpha} \in \omega$ such that $[i,j_{\alpha})_{W}$ has order-type $\alpha$ (except
possibly for those i's belonging to a final segment of W of order-type some $\beta <
\omega_{1}^{CK}$). To see this, for a given i, if this fails, let $\alpha$ be the least
ordinal for which this fails. By the admissibility of L$_{\omega_{1}^{CK}}$, 
$\{ j_{\beta} \mid \beta < \alpha \} \in$ L$_{\omega_{1}^{CK}}$. If this set is a final
segment of W, then the end-segment starting at 0$_{i}$ (as constructed above) is an
end-segment of order-type $\gamma$ = o.t.[0$_{i},i)_{W}$ + $\alpha < \omega_{1}^{CK}$.
In the other case, there is a j $>_{W}$ j$_{\beta}$ for all $\beta < \alpha$. In fact,
there is a least such j, or else the function f(0) = j $>$ j$_{\beta}$ (for all $\beta$
simultaneously) arbitrary, f(n+1) = the least (in the standard ordering of $\omega$) j
such that j $<_{W}$ f(n) and j $>$ j$_{\beta}$ ($\forall \beta$) would produce a 
hyperarithmetic infinite descending sequence. This least such j witnesses that [i,j) has
order-type $\alpha$, in contrast to the choice of $\alpha$. 

Combining these two results, we see that every i is contained in a maximal interval
of order-type an ordinal, and this interval starts at 0$_{i} \leq_{W}$ i and has order-type
$\omega_{1}^{CK}$ (or some $\beta < \omega_{1}^{CK}$ on a final segment). We will call
this interval i's standard neighborhood, with the notation sn(i). If $i_{1} >_{W} i_{0}$
is not in $i_{0}$'s standard neighborhood, then $\forall j \in$ sn($i_{0}$) $i_{1} >_{W} j$;
we will use the notation $i_{1} >_{W}$ sn($i_{0}$) in this case and say that $i_{1}$ is a 
non-standard distance above $i_{0}$. Notice that in this case there is an $\hat{i}$ such
that $i_{1} >_{W}$ sn($\hat{i}$) and $\hat{i} >_{W}$ sn($i_{0}$). To see this, notice that 
\{ $j \mid$ sn($i_{0}$) $<_{W} j <_{W} 0_{i_{1}}$ \} is non-empty, else sn($i_{0}$) would be a
member of L$_{\omega_{1}^{CK}}$ (definable as [$0_{i_{0}}, 0_{i_{1}}$)$_{W}$) and f(j) = 
order-type([$0_{i_{0}}, j$)$_{W}$) would then be a total function from sn($i_{0}$) onto
$\omega_{1}^{CK}$. Let $\hat{i}$ be any element such that sn($i_{0}) <_{W} \hat{i} <_{W} 0_{i_{1}}$.

With these preliminaries behind us, we can now return to the construction of the model
{\bf M}. Given a final segment
of W with least element i, let $\xi_{i}$ be \{B$_{j0} \mid j \geq_{W} i$\}
$\cup \; \alpha \cup \{\alpha\}$. As in section \ref{section-not-DC},
iterate definability $\omega_{1}^{CK}$-many times on top of L$_{\xi_{i}}$,
to get {\bf M}$_{i}$. As has already been observed,
lemmas \ref{IKP} - \ref{L} did not use any particular facts about the reals
used to define the branches,
and so in the following we will freely omit details of the proofs, referring the reader
to the proofs of these previous lemmas.

{\bf M} will be $\bigcup_{i \in I} {\bf M}_{i}$, for
some final segment I of W with no least element. The exact choice of I will
be given in lemma \ref{IKP2}, the only place where it's used.

\begin{lemma}
{\bf M}$_{i}$ $\models$ IKP. \label{local IKP}
\end{lemma}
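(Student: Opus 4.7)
The plan is to mimic Lemma~\ref{IKP} essentially verbatim, exploiting the fact, explicitly noted just above the statement, that the proof of that earlier lemma never used anything special about the reals whose branches appeared at the bottom level. Concretely, I would fix an ambient classical admissible universe $\mathbf{U}_i$ of the form $L_{\omega_1^{CK}}[\langle T_n \rangle_{n \geq 1}, \langle B_j \rangle_{j \geq_W i}]$ in which the Kripke structure $\mathbf{M}_i$ is definable by a $\Sigma_1$ recursion. Admissibility of $\mathbf{U}_i$ is the key external fact I will use; since the parameters involved are all hyperarithmetic (or generic) reals, $\mathbf{U}_i \models$ KP by standard considerations.

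With $\mathbf{U}_i$ in place, I would verify the IKP axioms level by level. First, I establish by induction on $\alpha < \omega_1^{CK}$ that each $(\mathbf{M}_i)_\beta$ end-extends each $(\mathbf{M}_i)_\alpha$ for $\alpha < \beta$, and that extensionality is preserved — precisely as in Lemma~\ref{IKP}. Pairing, Empty, and Union are then immediate: for $x, y \in (\mathbf{M}_i)_\alpha$, the pair $\{x,y\}$ appears at level $\alpha + 1$, and end-extension guarantees it remains the pair in $\mathbf{M}_i$. Infinity uses the internalized $\overline{\omega}$ as in the previous construction, which is built definably over $(\mathbf{M}_i)_\omega$. $\Delta_0$ Comprehension follows because end-extension makes $\Delta_0$ formulas absolute between $(\mathbf{M}_i)_\alpha$ and $\mathbf{M}_i$, so $\{x \in X \mid \phi(x)\}$ is captured one level up from where $X$ and the parameters of $\phi$ live. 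Foundation is proved by the same minimality-of-counterexample argument as in Lemma~\ref{IKP}, again leaning on end-extensionality.

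The main work, as before, is $\Delta_0$ Bounding, and this is where the choice of $\mathbf{U}_i$ pays off. Any $\Delta_0(\mathbf{M}_i)$ formula $\phi$ is equivalent to a $\Delta_0(\mathbf{U}_i)$ formula $\phi^{*}$, obtained by inductively replacing the Kripke clauses for $\to, \forall,$ etc.\ with bounded quantifications over the nodes of $P$ extending a given one — no unbounded quantifiers are introduced. If $\mathbf{M}_i \models \forall x \in A\ \exists y\ \phi(x,y)$, then in $\mathbf{U}_i$ we have $\forall x \in_{\mathbf{M}_i} A\ \exists \alpha\ \exists y \in (\mathbf{M}_i)_\alpha\ \phi^{*}(x,y)$, and $\Sigma$ Bounding in the admissible $\mathbf{U}_i$ gives a uniform bound $\beta < \omega_1^{CK}$ on the required $\alpha$'s; the set $B = (\mathbf{M}_i)_\beta$ (or a $\Delta_0$-definable restriction thereof) witnesses Bounding in $\mathbf{M}_i$.

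The only real obstacle I anticipate is pinning down precisely which ambient admissible set $\mathbf{U}_i$ to use, i.e.\ confirming that the relevant reals coding $\langle T_n \rangle$ and $\langle B_j \mid j \geq_W i\rangle$ can all be adjoined to $L_{\omega_1^{CK}}$ while preserving admissibility. Everything else is a direct transcription of Lemma~\ref{IKP} with $\xi_i$ in place of $\xi$.
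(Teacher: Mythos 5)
Your proposal is correct and is essentially the paper's own argument: the paper's proof of this lemma is literally ``As in lemma~\ref{IKP},'' and your level-by-level transcription (end-extension, absoluteness of $\Delta_0$ formulas, Foundation by a minimal counterexample, and $\Delta_0$ Bounding via the $\Sigma_1$ recursion over an ambient admissible set) is exactly what is intended. The only small correction is to your identification of the ambient universe: in this section there are no generic Steel trees to adjoin --- the $\hat{P}_n$ are definable from the recursive partial order, and all the branches $B_j$ come from a single real $R$ (which may be taken recursive), so the ambient admissible set is just $L_{\omega_1^{CK}}$ (or $L_{\omega_1^{CK}}[R]$), and your anticipated obstacle evaporates.
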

\begin{proof}
As in lemma~\ref{IKP}.
\end{proof}

\begin{lemma}
L$_{\alpha} \in$ L$^{{\bf M}_{i}}$. \label{local L lemma}
\end{lemma}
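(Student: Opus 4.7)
The plan is to mirror the corresponding step in Lemma~\ref{L} of Section~\ref{section-not-DC}: first establish that $\alpha$ is available as an ordinal inside $\mathbf{M}_i$, and then invoke the fact, recalled there, that in IKP the function $\beta \mapsto L_\beta$ is total and $\Delta_1$-definable (since $X \mapsto \mathrm{def}(X)$ is, by the result of \cite{me}). That will give $L_\alpha \in L^{\mathbf{M}_i}$ automatically.

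First I would observe that $\alpha$ itself is a member of $\xi_i$ by the very definition $\xi_i = \{B_{j0} \mid j \geq_W i\} \cup \alpha \cup \{\alpha\}$, so $\mathrm{def}(L_\alpha) \subseteq L_{\xi_i}$ appears as one of the stages in the construction of $L_{\xi_i}$. Since $\alpha$, being the collection of all ordinals in $L_\alpha$, is a definable subclass of $L_\alpha$, we get $\alpha \in L_{\alpha+1} \subseteq L_{\xi_i} \subseteq \mathbf{M}_i$. Ordinality is $\Delta_0$ for transitive sets (and $\alpha$ has already been shown to be a Kripke ordinal when it was introduced), so $\alpha$ is an ordinal inside $\mathbf{M}_i$ as well.

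Next, Lemma~\ref{local IKP} gives $\mathbf{M}_i \models$ IKP, and the map $\beta \mapsto L_\beta$ is $\Delta_1$-definable and provably total in IKP, as recalled in the proof of Lemma~\ref{L}. Applied inside $\mathbf{M}_i$ to the ordinal $\alpha$, this at once yields $L_\alpha \in L^{\mathbf{M}_i}$.

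The one point requiring a word of care, and really the only subtlety in the argument, is absoluteness: the $L_\alpha$ appearing in the statement is the externally constructed Kripke structure of Section~\ref{general}, whereas the $L_\alpha$ we obtain inside $L^{\mathbf{M}_i}$ is the one computed internally from $\alpha$. These coincide by the absoluteness discussion at the end of Section~\ref{section-int-def}, which identifies external and internal iterations of ``def'' along a given ordinal parameter once the ordinal has been identified inside and outside. Since we have already carried out exactly that identification for $\alpha$, this is routine and completes the plan.
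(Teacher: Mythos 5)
The overall shape of your argument --- produce an ordinal $\delta$ inside ${\bf M}_i$ with $L_\delta = L_\alpha$ and then let IKP plus the $\Delta_1$-definability of $X \mapsto \mathrm{def}(X)$ do the rest --- is the right one, and your closing remarks on internal/external absoluteness are fine. But the step on which everything rests, getting $\alpha$ itself as a set of ${\bf M}_i$, has a genuine gap. You claim $\alpha$ is definable over $L_\alpha$ as ``the collection of all ordinals in $L_\alpha$.'' That identification is false: $L_\alpha$ contains $\mathrm{def}(T_{n0})$ for each $n$, and over $T_{n0}$ one can define, with parameter $1_\sigma$, the set $\{x \in T_{n0} \mid x = 0 \vee x = 1_\sigma\} = \{0, 1_\sigma\}$, which is a transitive set of transitive sets, hence an ordinal, yet is not a member of $\alpha = \hat{P} \cup \overline{\omega} \cup \{T_{n0} \mid n \geq 1\} \cup \{\alpha_n \mid n \geq 1\}$ (it is not equal to any $1_{\sigma'}$, to any $\overline{n}$, or to any of the infinite sets $T_{n0}$, $\alpha_n$). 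The true statement in this vicinity, and the one the paper actually uses, is that each $\alpha_n$ is definable over $L_{\alpha_n} = \hat{P}_n \cup L_{\overline{n+1}}$ as the set of all ordinals (indeed $\alpha_n = L_{\alpha_n}$); but that only yields $\alpha \subseteq L_\alpha$, not $\alpha \in \mathrm{def}(L_\alpha)$. Nor is it clear that $\alpha$ is definable over $L_\alpha$ by any other formula: the parameters available in $L_\alpha$ do not obviously suffice to carve out $\{T_{n0} \mid n \geq 1\} \cup \{\alpha_n \mid n \geq 1\}$ uniformly (the function $n \mapsto \hat{P}_n$ only becomes available at $L_{\alpha + \overline{m}}$). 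The fact that $\alpha \in \xi_i$ gives you $\mathrm{def}(L_\alpha) \subseteq L_{\xi_i}$, hence $L_\alpha \in L_{\xi_i}$ as a set of ${\bf M}_i$, but it does not give $\alpha \in L_{\xi_i}$.

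The paper sidesteps this entirely: rather than recovering $\alpha$, it uses the admissibility of ${\bf M}_i$ (applied to the set $L_\alpha$, which is in ${\bf M}_i$) to form the possibly different set of ordinals $\gamma = \{\beta \in L_\alpha \mid \mathrm{def}(L_\beta) \subseteq L_\alpha\}$, checks $\alpha \subseteq \gamma$ (using $\alpha \subseteq L_\alpha$, which is what the observation about the $\alpha_n$'s buys) so that $L_\alpha \subseteq L_\gamma$, and gets $L_\gamma \subseteq L_\alpha$ directly from the definition of $\gamma$; then $L_\alpha = L_\gamma \in L^{{\bf M}_i}$. To repair your proof you would either have to exhibit an actual definition of $\alpha$ over $L_\alpha$, which does not appear to be available, or replace $\alpha$ by such a surrogate set of ordinals as the paper does.
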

\begin{proof}
$\alpha_{n}$ is definable over L$_{\alpha_{n}}$ as the set of all ordinals, so
$\alpha_{n}
\in$ L$_{\alpha}$ and $\alpha \subseteq$ L$_{\alpha}$. Since {\bf M}$_{i}$ is
admissible,
$\gamma$ = \{ $\beta \in L_{\alpha} \mid$ def(L$_{\beta}$) $\subseteq$
L$_{\alpha}$ \}
$\in$ {\bf M}$_{i}$. $\alpha \subseteq \gamma$, so L$_{\alpha}$ $\subseteq$
L$_{\gamma}$. By the definition of $\gamma$, L$_{\gamma}$ $\subseteq$
L$_{\alpha}$.
\end{proof}

\begin{corollary}
f $\in$ {\bf M}$_{i}$.
\end{corollary}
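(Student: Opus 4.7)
The plan is to leverage the two pieces of information already assembled just before this corollary. First, in the discussion defining $f$, it was observed that $f \in L_{\alpha + \overline{m}}$ for some small integer $m$ (indeed $\overline{m} = 1$ suffices to get $L_{\alpha}$ as a parameter, plus a few more steps to construct the ordered pairs witnessing the functional relation $\psi(\overline{n}, Z)$). Second, Lemma \ref{local L lemma} gives $L_{\alpha} \in L^{{\bf M}_{i}}$, and hence $L_{\alpha} \in {\bf M}_{i}$.

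So my plan is simply to propagate $\overline{m}$-many additional def-steps on top of $L_{\alpha}$ inside ${\bf M}_{i}$. Since ${\bf M}_{i} \models$ IKP by Lemma \ref{local IKP}, the function $\beta \mapsto L_{\beta}$ is total and $\Delta_{1}$ in ${\bf M}_{i}$ (as cited from \cite{me} in the proof of Lemma \ref{L}). Applying this function to the ordinal $\alpha + \overline{m} \in {\bf M}_{i}$ (which is a member of ${\bf M}_{i}$ since $\alpha \in {\bf M}_{i}$ and $\overline{m} \in \overline{\omega} \in {\bf M}_{i}$, and IKP proves ordinal addition total) yields $L_{\alpha + \overline{m}} \in {\bf M}_{i}$. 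Since $f \in L_{\alpha + \overline{m}}$, and ${\bf M}_{i}$ is transitive enough to contain the members of its sets (by the end-extension property used throughout section \ref{section-not-DC}), we conclude $f \in {\bf M}_{i}$.

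There is essentially no obstacle: the work has already been done, and this corollary is a one-line bookkeeping consequence of the preceding lemma together with the admissibility of ${\bf M}_{i}$. The only thing worth verifying carefully is that the internal ordinal $\alpha + \overline{m}$ really is available in ${\bf M}_{i}$, which is immediate because $\alpha$ was explicitly included among the generators of $\xi_{i}$ (recall $\xi_{i} = \{B_{j0} \mid j \geq_{W} i\} \cup \alpha \cup \{\alpha\}$) and $\overline{m}$ is a standard finite ordinal already in $L_{\overline{\omega}} \subseteq L_{\alpha}$.
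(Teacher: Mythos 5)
The overall shape of your argument is the paper's ($f \in L_{\alpha + \overline{m}}$, so it suffices to get $L_{\alpha + \overline{m}}$ into ${\bf M}_{i}$ and use transitivity), but the step you single out as ``the only thing worth verifying carefully'' is exactly where the gap is. You claim $\alpha \in {\bf M}_{i}$ because $\alpha$ appears among the generators of $\xi_{i}$. Membership of $\alpha$ in the Kripke ordinal $\xi_{i}$ only guarantees that $def(L_{\alpha})$ is one of the pieces unioned into $L_{\xi_{i}}$ --- i.e.\ that $L_{\alpha}$ and everything definable over it are members of ${\bf M}_{i}$ --- not that $\alpha$ itself is a member. For that you would need $\alpha$ to be definable over $L_{\alpha}$ (or some other available level), and the classical shortcut --- $\alpha$ is the set of all ordinals of $L_{\alpha}$ --- fails in this Kripke setting: $L_{\alpha}$ contains ordinals such as $\{0, 1_{\sigma}\} \in def(\hat{P}_{n0})$, which is transitive with transitive members but is not equal to any $1_{\tau}$, any $\overline{k}$, any $T_{n0}$, or any $\alpha_{n}$, hence is not a member of $\alpha$. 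This is precisely why lemma \ref{local L lemma} asserts only $L_{\alpha} \in L^{{\bf M}_{i}}$ and manufactures the surrogate ordinal $\gamma = \{\beta \in L_{\alpha} \mid def(L_{\beta}) \subseteq L_{\alpha}\} \in {\bf M}_{i}$ (using admissibility) with $L_{\gamma} = L_{\alpha}$; the paper's proof of the corollary then computes $L_{\gamma + \overline{n}} = L_{\alpha + \overline{n}} \ni f$ inside ${\bf M}_{i}$. As written, your appeal to ``$\alpha + \overline{m} \in {\bf M}_{i}$'' is not justified.

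The repair is already latent in your own first sentence of the second paragraph: you do not need the ordinal $\alpha$ at all. Since $L_{\alpha} \in {\bf M}_{i}$ and $X \mapsto def(X)$ is total and $\Delta_{1}$-definable in ${\bf M}_{i} \models$ IKP, applying $def$ the finitely many ($m$) times gives $def^{m}(L_{\alpha}) = L_{\alpha + \overline{m}} \in {\bf M}_{i}$, and transitivity yields $f \in {\bf M}_{i}$. Either this finite iteration of $def$, or the substitution of $\gamma$ for $\alpha$, is needed to close the argument.
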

\begin{proof}
f $\in$ L$_{\alpha + \overline{n}}$, and since L$_{\gamma}$ = L$_{\alpha}$,
L$_{\gamma + \overline{n}}$ =
L$_{\alpha + \overline{n}}$.
\end{proof}

\begin{lemma}
In {\bf M}$_{i}$, if $\bot \models$ ``B is a branch through f(j)'' then j
$\geq_{W}$ i.
\end{lemma}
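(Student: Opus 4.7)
The plan is to argue by contradiction using a symmetry argument. Suppose that some Kripke set $B$ is forced at $\bot$ to be a branch through $f(j) = \hat{P}_{j}$, while $j \not\geq_{W} i$. Because ${\bf M}_{i}$ is obtained by iterating definability $\omega_{1}^{CK}$ times over $L_{\xi_{i}}$, the set $B$ admits a definition at some stage from finitely many parameters; unwinding this recursion, $B$ is ultimately definable (by a compound formula) from finitely many base parameters drawn from $L_{\xi_{i}}$. The plan is to produce an automorphism of ${\bf M}_{i}$ that fixes all these parameters but nevertheless moves $B$, contradicting the uniqueness of the definition.

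To build the automorphism, I would exploit the shape of $\xi_{i} = \{B_{k0} \mid k \geq_{W} i\} \cup \alpha \cup \{\alpha\}$. The data packaged in $\alpha$ (namely $\hat{P}$, $\overline{\omega}$, the $T_{n0}$'s, the $\alpha_{n}$'s, and $f$) are branch-neutral, singling out no particular path through any $\hat{P}_{n}$; the only branch-distinguishing pieces of data are the $B_{k}$'s with $k \geq_{W} i$, and since $j \not\geq_{W} i$ forces $k \neq j$ for every such $k$, these $B_{k}$'s all live in subtrees $\hat{P}_{k}$ disjoint from $\hat{P}_{j}$. Consequently any automorphism $\pi$ of the partial order ${\it P}$ that is the identity outside $\hat{P}_{j}$ (and on the prefix $\{\bot\bot, \bot\}$) fixes each $B_{k}$ pointwise and fixes $\hat{P}$, $\alpha$, and $f$ as Kripke sets.

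Only finitely many $1_{\sigma}$ with $\sigma$ in $\hat{P}_{j}$ can occur among the base parameters of $B$'s definition. Choose a node $\sigma^{\ast}$ in $\hat{P}_{j}$ extending each such $\sigma$, and let $\pi$ swap the two immediate successors of $\sigma^{\ast}$, propagating by reflection through their subtrees and extending by the identity elsewhere. Then $\pi$ fixes pointwise every $1_{\sigma}$ with $\sigma$ not strictly extending $\sigma^{\ast}$, hence fixes all base parameters of $B$'s definition. Inductively along the levels of the definability iteration, $\pi$ extends to an automorphism of ${\bf M}_{i}$, and since definability respects automorphism, this extension fixes $B$. On the other hand, by lemma~\ref{externalization lemma} the Kripke branch $B$ corresponds to an external branch through $\hat{P}_{j}$, and that external branch passes through exactly one of the two immediate successors of $\sigma^{\ast}$; $\pi$ interchanges these, forcing $\pi(B) \neq B$, the desired contradiction.

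The main obstacle I anticipate is the careful bookkeeping in the inductive extension of $\pi$: one must verify that if $X$ is definable in $({\bf M}_{i})_{\gamma}$ from parameters $\bar{p}$ then $\pi(X)$ is defined by the same formula applied to $\pi(\bar{p})$, and ensure this propagates through limit stages and up to $\omega_{1}^{CK}$. This is routine once the base case on $L_{\xi_{i}}$ is set up correctly, but it demands attention to make sure that ``finitely many base parameters'' really does capture the entirety of $B$'s dependence, especially given the intuitionistic reading of the definability operator as developed in section \ref{section-int-def}.
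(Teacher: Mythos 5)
Your proof is correct and takes essentially the same route as the paper's: unwind $B$'s definition through the definability hierarchy down to finitely many parameters of the form $1_{\sigma}$ (the branches $B_{k}$ with $k \geq_{W} i$, hence $k \neq j$, carrying no information inside $\hat{P}_{j}$), and then defeat branch-hood with an automorphism of the subtree $\hat{P}_{j}$ above those parameters. The only nit is that your finitely many parameter nodes in $\hat{P}_{j}$ need not be pairwise compatible, so there may be no $\sigma^{\ast}$ extending all of them; choose $\sigma^{\ast}$ beyond or incompatible with each of them (as the paper does), which is all your argument actually uses, since it only requires that no parameter node strictly extend $\sigma^{\ast}$.
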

\begin{proof}
Suppose j $<_{W}$ i. Consider {\bf M}$_{i}$ at node $\bot_{j}$. For any
definition $\phi(x)$
at any stage in the construction of {\bf M}$_{i}$, consider the structure which
consists of all
of the relevant information: at first this includes the stage $\alpha$ at which
$\phi$ is being
evaluated, ({\bf M}$_{i}$)$_{\alpha}$, the formula $\phi$, and $\phi$'s
parameters, but these parameters can themselves be unpacked, until the only
parameters left are from $\hat{P}_{j}$. This uses of course that at
$\bot_{j}$ all of the branches included in $\xi_{j}$ are
\{1\}. The parameters are all of the form 1$_{\sigma}$. Let $\tau$ be any node
beyond or incompatible with all such $\sigma$'s. Then $\tau \models
\phi(1_{\rho})$ for some $\rho$ extending $\tau$ iff $\tau \models
\phi(1_{\rho})$ for every $\rho$ extending $\tau$ of the same length. (The
proof of this would use an automorphism of the model interchanging any
two such $\rho$'s.) So $\phi$ could not possibly define a branch through
$\hat{P}_{j}$.
\end{proof}

\begin{lemma}
If k $\leq_{W}$ i then L$_{\xi_{i}} \in$ L$^{{\bf M}_{k}}$.
\end{lemma}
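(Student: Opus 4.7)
The plan is to exhibit $\xi_i$ as a set (indeed an ordinal) of ${\bf M}_k$; once that is done, $L_{\xi_i}$ is a set of ${\bf M}_k$ by $\Sigma_1$-totality of $\beta \mapsto L_\beta$ in any model of IKP (see \cite{me}), and $L_{\xi_i} \in L^{{\bf M}_k}$ because $\xi_i$ is then an ordinal of ${\bf M}_k$ and every such $L$-level sits inside $L^{{\bf M}_k}$. First I would note that $k \leq_W i$ gives $\{j \mid j \geq_W i\} \subseteq \{j \mid j \geq_W k\}$, so every $B_{j0}$ which appears in $\xi_i$ is already a member of $\xi_k$ and hence of ${\bf M}_k$. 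Also $\alpha$ and $\{\alpha\}$ are in ${\bf M}_k$ since $\alpha \in \xi_k$. What needs to be done is to \emph{collect} the relevant $B_{j0}$'s into a single set of ${\bf M}_k$.

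The tool for the collection is the function $f$, which by the corollary above is in ${\bf M}_k$ and sends $\overline{n} \mapsto \hat{P}_n$. Working in ${\bf M}_k$, I would use the definition
$$
\{B_{j0} \mid j \geq_W i\} \;=\; \{B \in \xi_k \mid \exists n \in \overline{\omega}\;(n \geq_W \overline{i} \wedge 0 \in B \wedge B \setminus \{0\} \text{ is a branch through } f(n))\},
$$
where the inner predicate is $\Delta_0$ by Definition~\ref{branches} and $n \geq_W \overline{i}$ is decidable since $W$ is recursive. The whole condition bounds all quantifiers inside $\xi_k$ and $\overline{\omega}$, so this is a $\Delta_0$ definition, and $\Delta_0$ Separation in ${\bf M}_k$ places the set in ${\bf M}_k$. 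Taking the union with $\alpha$ and $\{\alpha\}$ via Pairing and Union produces $\xi_i \in {\bf M}_k$, and the verification that $\xi_i$ is an ordinal is the same routine check as for $\xi_k$.

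The heart of the matter is to verify that the displayed definition gives the correct set. This reduces to two claims: (a) for $j',n \geq_W k$, $B_{j'}$ is a branch through $\hat{P}_n$ if and only if $n = j'$; and (b) no $B \in \alpha \cup \{\alpha\}$ satisfies $0 \in B$ together with $B \setminus \{0\}$ being a branch through any $\hat{P}_n$. The ``if'' half of (a) is by construction, and (b) is a routine case-check on the possible shapes of members of $\alpha$ (members of $\hat{P}$ give $B \setminus \{0\}$ empty or $\{\emptyset\}$ which fails clause (3); members of $\overline{\omega}$, each $T_{n0}$, and each $\alpha_n$ contain elements that are not in any single $\hat{P}_n$ or fail linearity).

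The main obstacle is the ``only if'' direction of (a), which must be proven by reasoning externally in the Kripke structure. I would work at a node $\tau$ lying in the $j$-th subtree with $j \neq j'$. There $\hat{P}_j$ still looks like the full tree above $\tau$, whereas by the analysis already used implicitly in Lemma~\ref{externalization lemma}, $B_{j'}$ collapses at $\tau$ so that its externalization contains no $1_\rho$ with $\rho \geq \tau$; every element of $(B_{j'})_\tau$ looks externally like $1$. Then clause (3) of the branch definition is violated for $B_{j'}$ viewed as a putative branch through $\hat{P}_j$: for every $\gamma = 1_\rho \in \hat{P}_j$ with $\rho \geq \tau$ we have $\gamma \subseteq 1$ trivially, so $\gamma$ would have to belong to $B_{j'}$, yet $\gamma$ does not. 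Hence $\bot \not\models$ ``$B_{j'}$ is a branch through $\hat{P}_j$'' when $j \neq j'$, and since branch-hood is $\Delta_0$ this is absolute between ${\bf M}_k$ and $\mathbf{V}$. With (a) and (b) in hand the defined set is exactly $\{B_{j0} \mid j \geq_W i\}$, and the proof concludes as indicated.
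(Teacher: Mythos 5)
Your core strategy is the same as the paper's: use the function $f$ (already shown to lie in ${\bf M}_{k}$) together with the $\Delta_{0}$ branch predicate to carve out, by $\Delta_{0}$ Separation inside ${\bf M}_{k}$, exactly the branches attached to indices $j \geq_{W} i$, and then assemble an ordinal whose $L$-level is $L_{\xi_{i}}$. You also supply the correctness check (your claims (a) and (b), with the clause-(3) collapse argument at a node in the wrong subtree) that the paper leaves implicit; that part is fine and matches the automorphism/absoluteness reasoning used elsewhere in the section.

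There is, however, a genuine soft spot in how you set the collection up. You separate over $\xi_{k}$ and you assert that ``$\alpha$ and $\{\alpha\}$ are in ${\bf M}_{k}$ since $\alpha \in \xi_{k}$.'' Membership in the index ordinal $\xi_{k}$ does not by itself put a set into the model: $({\bf M}_{k})_{0} = L_{\xi_{k}} = \bigcup_{\beta \in \xi_{k}} \mathrm{def}(L_{\beta})$, so $\alpha$ lands in ${\bf M}_{k}$ only if it is definable over some $L_{\beta}$ with $\beta \in \xi_{k}$, and it is not clear that $\alpha$ is definable over $L_{\alpha}$ (for instance, $\alpha$ is \emph{not} the set of all ordinals of $L_{\alpha}$, since $\mathrm{def}(T_{n0})$ contains ordinals such as $\{0, 1_{\sigma}\}$ that are not members of $\alpha$). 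For the same reason it is not established that $\xi_{k}$ itself is a set of ${\bf M}_{k}$ over which you may apply $\Delta_{0}$ Separation. The paper sidesteps both issues: it separates over $L_{\xi_{k}}$ (which \emph{is} a set of ${\bf M}_{k}$, being definable over $({\bf M}_{k})_{0}$ as $\{x \mid x = x\}$), and instead of producing $\xi_{i}$ itself it produces the ordinal $\delta \cup \gamma \cup \{\gamma\}$, where $\gamma = \{\beta \in L_{\alpha} \mid \mathrm{def}(L_{\beta}) \subseteq L_{\alpha}\}$ is the surrogate for $\alpha$ obtained by admissibility in the preceding lemma, and then verifies only the weaker (and sufficient) identity $L_{\xi_{i}} = L_{\delta \cup \gamma \cup \{\gamma\}}$. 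Your argument goes through once you make these two substitutions; as written, the step ``$\xi_{i} \in {\bf M}_{k}$'' rests on an unjustified inference.
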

\begin{proof}
In {\bf M}$_{k}$, let B$_{full}$ be \{ B $\in L_{\xi_{k}} \mid$ B is a branch
through f(j) for some j
$\geq_{W}$ i \}. Letting $\delta$ be \{ B$_{0} \mid$ B $\in B_{full}$ \} $\cup$
T, and letting
$\gamma$ be as in lemma \ref{local L lemma}, L$_{\xi_{i}}$ = L$_{\delta \cup
\gamma \cup
\{\gamma\}}$.
\end{proof}

\begin{corollary}
If k $\leq_{W}$ i then {\bf M}$_{i} \subseteq$ {\bf M}$_{k}$.
\end{corollary}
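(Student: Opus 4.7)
The plan is to exploit the preceding lemma, which gives $L_{\xi_i} \in L^{{\bf M}_k}$ whenever $k \leq_W i$, and to observe that ${\bf M}_i$ can be rebuilt inside ${\bf M}_k$ by iterating definability along the standard ordinals already available there. Recall that ${\bf M}_i$ is obtained externally by iterating ``def'' $\omega_1^{CK}$ times on top of $L_{\xi_i}$, so externally ${\bf M}_i = \bigcup_{\alpha < \omega_1^{CK}} L_{\xi_i + \alpha}$, where the sums are ordinal sums carried out on top of $\xi_i$ within the $L$-hierarchy.

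First I would note that since $L_{\xi_i} \in L^{{\bf M}_k}$, the ordinal $\xi_i$ itself lies in $L^{{\bf M}_k}$, being definable over $L_{\xi_i}$ as the set of all its ordinals. Next, lemma~\ref{local IKP} tells us that ${\bf M}_k \models$ IKP, and the argument in the proof of lemma~\ref{resolvable} shows that every external $\alpha < \omega_1^{CK}$ is an internal ordinal $\overline{\alpha}$ of ${\bf M}_k$, with $\overline{\omega_1^{CK}} \subseteq {\bf M}_k$. Since $\beta \mapsto L_\beta$ is provably total and $\Delta_1$-definable in IKP (as used already in the proof of lemma~\ref{L}), the set $L_{\xi_i + \overline{\alpha}}$ is a member of $L^{{\bf M}_k}$ for every such $\alpha$.

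The correspondence between external and internal iteration of definability developed in section~\ref{section-int-def} then identifies the external $L_{\xi_i + \alpha}$ with this internally computed $L_{\xi_i + \overline{\alpha}}$. Taking the union over $\alpha < \omega_1^{CK}$, every Kripke set appearing at any stage of the external construction of ${\bf M}_i$ is already a set of $L^{{\bf M}_k} \subseteq {\bf M}_k$, so ${\bf M}_i \subseteq {\bf M}_k$.

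The one point that requires care, and what I expect to be the main obstacle, is the bookkeeping: verifying that the external $\omega_1^{CK}$-fold iteration defining ${\bf M}_i$ matches stage by stage the internal transfinite iteration carried out in ${\bf M}_k$, both with respect to $\xi_i$ (which is a non-classical ordinal containing branches) and with respect to the standard-versus-arbitrary ordinal distinction inside ${\bf M}_k$. Once the internal/external absoluteness of $\beta$-def from section~\ref{section-int-def} is invoked, however, no further work is needed.
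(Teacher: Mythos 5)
Your proposal is correct and follows essentially the same route as the paper, whose proof of this corollary is just the citation ``As in lemma~\ref{resolvable}'': one shows that ${\bf M}_k$, being admissible and containing $L_{\xi_i}$ together with every internalized ordinal $\overline{\alpha}$, $\alpha<\omega_1^{CK}$, can reconstruct each stage of the definability iteration over $L_{\xi_i}$ by $\Sigma$ recursion (using the $\Delta_1$-totality of $X\mapsto\mathrm{def}(X)$ as in lemma~\ref{L}). The only bookkeeping caveat is the one you already flag, namely that the set of ordinals of $L_{\xi_i}$ definable over it may be some $\gamma\supseteq\xi_i$ with $L_\gamma=L_{\xi_i}$ rather than $\xi_i$ itself, which is harmless.
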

\begin{proof}
As in lemma \ref{resolvable}.
\end{proof}

\begin{corollary}
{\bf M}$_{i} \models$ V=L.
\end{corollary}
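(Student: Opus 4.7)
The plan is to copy the argument of Lemma \ref{L} verbatim, with $L_{\xi_{i}}$ playing the role of $L_{\xi}$. Nothing in that earlier proof used any property of the seed $L_{\xi}$ beyond its sitting inside an admissible ambient universe and its closure under the definability iteration, so once we verify the three parallel inputs here, the conclusion follows.

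First I would check that $L^{{\bf M}_{i}}\models$ IKP. Extensionality and Foundation are inherited because $L^{{\bf M}_{i}}$ is a definable subclass of the well-founded, extensional ${\bf M}_{i}$ (we have ${\bf M}_{i}\models$ IKP by Lemma \ref{local IKP}). Empty Set, Pairing, Union, and Infinity go through via the standard $L$-tricks ($\{x,y\}\in\mathrm{def}(L_{\alpha\cup\beta})$, $\bigcup x\in\mathrm{def}(L_{\alpha})$, and $\overline{\omega}\in\mathrm{def}(L_{\overline{\omega}})$). For $\Delta_{0}$ Comprehension the usual absoluteness of $\Delta_{0}$ formulas between $L^{{\bf M}_{i}}$ and ${\bf M}_{i}$ suffices. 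For $\Delta_{0}$ Bounding, given $\forall x\in X\,\exists y\in L^{{\bf M}_{i}}\,\phi(x,y)$ I would rewrite this as $\forall x\in X\,\exists\alpha\,\exists y\in L_{\alpha}\,\phi(x,y)$ and apply $\Sigma$ Bounding in ${\bf M}_{i}$, then take the union of the resulting set of ordinals to produce a single $L_{\beta}$.

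Second I would show that the seed $L_{\xi_{i}}$ and all the internalized countable ordinals lie in $L^{{\bf M}_{i}}$. The previous lemma (instantiated at $k=i$) already gives $L_{\xi_{i}}\in L^{{\bf M}_{i}}$. For the ordinals $\overline{\alpha}$, $\alpha<\omega_{1}^{CK}$, I would rerun the standardness argument of Lemma \ref{resolvable}: it uses only the admissibility of the ambient universe and the Second Recursion Theorem inside ${\bf M}_{i}$, both of which we now have. Hence each $\overline{\alpha}\in{\bf M}_{i}$, and since $\overline{\alpha}$ is definable over $L_{\overline{\alpha}}$ we get $\overline{\alpha}\in L^{{\bf M}_{i}}$.

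Third I would run the closure induction to conclude ${\bf M}_{i}\subseteq L^{{\bf M}_{i}}$: any admissible set containing $L_{\xi_{i}}$ together with every $\overline{\alpha}<\overline{\omega_{1}^{CK}}$ must contain each stage of the definability iteration used to build ${\bf M}_{i}$, because $X\mapsto\mathrm{def}(X)$ is $\Delta_{1}$ in IKP (the $\alpha_{aug}+7$ calculation from \cite{me} recalled at the end of Lemma \ref{L}). Combined with the first step, this gives ${\bf M}_{i}=L^{{\bf M}_{i}}$, i.e.\ ${\bf M}_{i}\models V=L$. No genuinely new obstacle arises here; the only thing to be careful about is that the proof of Lemma \ref{L} is phrased generically enough that transporting $L_{\xi}\rightsquigarrow L_{\xi_{i}}$ requires no structural change.
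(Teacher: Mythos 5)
Your proposal is correct and matches the paper exactly: the paper's entire proof of this corollary is the citation ``As in lemma \ref{L}'', relying on the earlier remark that lemmas \ref{IKP}--\ref{L} used no special properties of the reals defining the branches, and your three-step expansion (verifying IKP in $L^{{\bf M}_{i}}$, getting $L_{\xi_{i}}$ and the $\overline{\alpha}$'s into $L^{{\bf M}_{i}}$ via the preceding lemma at $k=i$ and the standardness argument, then the closure induction using the $\Delta_{1}$-definability of $X\mapsto\mathrm{def}(X)$) is precisely what that citation abbreviates.
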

\begin{proof}
As in lemma \ref{L}.
\end{proof}

\begin{lemma}
{\bf M} $\models$ IKP. \label{IKP2}
\end{lemma}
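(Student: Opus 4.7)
The plan is to verify each axiom of IKP for $\mathbf{M} = \bigcup_{i \in I} \mathbf{M}_i$, using that each $\mathbf{M}_i \models$ IKP (Lemma \ref{local IKP}) together with the chain property $\mathbf{M}_i \subseteq \mathbf{M}_k$ whenever $k \leq_W i$ (and these are end-extensions, as the constructions just iterate definability). Under the $W$-opposite indexing, $\mathbf{M}_i$ grows as $i$ descends in $W$ within $I$, and $\mathbf{M}$ is the resulting directed limit. Extensionality and Foundation are inherited because $\mathbf{M}$ is a subclass of the ambient well-founded extensional universe. Empty, Pairing, Union, and Infinity hold already in any single $\mathbf{M}_i$ (in fact in its base $L_{\xi_i}$), and therefore in the union. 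For $\Delta_0$ Comprehension: a $\Delta_0$ formula with parameters from some $\mathbf{M}_i$ is absolute between $\mathbf{M}_i$ and $\mathbf{M}$ by end-extensionality, so the separated set produced inside $\mathbf{M}_i$ remains correct in $\mathbf{M}$.

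The real content is the choice of $I$ and the verification of $\Delta_0$ Bounding; these must be done together. The plan is to take $I$ to be the set of $i \in W$ lying at a nonstandard $W$-distance above some fixed element $i^{\ast}$. By the density lemma for $W$ established just above (between any $i_0$ and any $i_1$ a nonstandard distance above it, an intermediate $\hat i$ exists with both gaps nonstandard), this $I$ is a final segment with no $W$-least element, and indeed contains arbitrarily small elements in the $W$-order above $\mathrm{sn}(i^{\ast})$. Consequently $\mathbf{M}$ strictly enlarges each individual $\mathbf{M}_i$.

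The main obstacle is $\Delta_0$ Bounding, which is where the hyperarithmetic properties of $W$ must be used. Suppose for contradiction that $\sigma \models \forall x \in A\, \exists y\, \phi(x,y)$ with $\phi \in \Delta_0$ and with $A$ and the parameters of $\phi$ in $\mathbf{M}_{i_0}$ for some $i_0 \in I$, yet no single $\mathbf{M}_j$, $j \in I$, contains an end-extensional bound for all needed witnesses. The strategy is to build, by $\Sigma_1$-recursion in the ambient $L_{\omega_1^{CK}}[G,\ldots]$, a $W$-descending sequence $i_0 >_W i_1 >_W \cdots$ in $I$: given $i_n$, search for some $x \in A$ (at some $\tau \geq \sigma$) having no witness in $\mathbf{M}_{i_n}$, then pick $i_{n+1} <_W i_n$ with $i_{n+1} \in I$ such that a witness appears in $\mathbf{M}_{i_{n+1}}$; this is possible precisely because witnesses must live in $\mathbf{M} = \bigcup_{j \in I} \mathbf{M}_j$ and larger models correspond to $W$-smaller indices. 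The relations ``$y \in \mathbf{M}_j$'' and ``$\tau \models \phi(x,y)$'' are both $\Sigma_1$ over the ambient admissible universe (by the same translation to $\Delta_0$ formulas used in Lemma \ref{IKP}), so the whole recursion is $\Sigma_1$ and, by admissibility of $L_{\omega_1^{CK}}[G,\ldots]$, produces an actual hyperarithmetic infinite descending $W$-sequence, contradicting the choice of $W$. The desired bound $j^{\ast} \in I$ therefore exists, and $\Delta_0$ Bounding in $\mathbf{M}_{j^{\ast}}$ (from Lemma \ref{local IKP}) supplies the bounding set, completing IKP in $\mathbf{M}$.
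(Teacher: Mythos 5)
Your handling of the easy axioms (union of transitive models of IKP, the chain property {\bf M}$_{i} \subseteq$ {\bf M}$_{k}$ for Pairing and Extensionality, well-foundedness for Foundation, $\Delta_{0}$ absoluteness for Comprehension) coincides with the paper's. The gap is in $\Delta_{0}$ Bounding, which is the entire content of the lemma, and it is twofold. First, your choice of $I$ --- all $i$ a nonstandard distance above one fixed $i^{\ast}$ --- is made in advance, independently of the formulas to be bounded. The paper instead chooses $I$ by a fusion over the countably many $\Delta_{0}$ definitions $\phi_{n}$: maintaining a pair min$_{n} <_{W}$ max$_{n}$ a nonstandard distance apart, it tests an intermediate $m$ and either keeps $m$ in the final segment (when {\bf M}$_{m}$ already contains a bounding set for $\phi_{n}$, which then survives into {\bf M}) or excludes everything $\leq_{W} m$ (when $\phi_{n}$ is not total in {\bf M}$_{m}$, hence, by $\Delta_{0}$ absoluteness for transitive submodels, not total in {\bf M} either). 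With a pre-fixed $I$ you must prove bounding outright, and that is what your second argument is for.

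Second, that argument does not actually produce a hyperarithmetic descending sequence. To contradict the choice of $W$, the recursion generating $i_{0} >_{W} i_{1} >_{W} \cdots$ must be $\Sigma_{1}$ over the ambient admissible set. But each step requires either selecting $i_{n+1} \in I$ or, equivalently, restricting the witness search to {\bf M} $= \bigcup_{j \in I}$ {\bf M}$_{j}$, and ``$j \in I$'' is not $\Sigma_{1}$ there: for your $I$ it asserts that $[0_{i^{\ast}}, j)_{W}$ is not well-ordered, a $\Pi_{1}$ condition whose natural $\Sigma_{1}$ witness (an infinite descending sequence) is exactly what $L_{\omega_{1}^{CK}}[\cdot]$ lacks. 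If you instead search over all {\bf M}$_{j}$, the search can return pseudo-witnesses built from branches $B_{k}$ with $k \notin I$; these are not members of {\bf M}, the sequence can drop below $I$ (where {\bf M}$_{j} \supseteq$ {\bf M} and the recursion stalls), and the ``bound'' so obtained need not be a set of {\bf M}. Note also that if your argument were sound it would show that the union over an arbitrary no-least-element final segment is admissible, whereas the construction (following \cite{metoo}) is premised on having to choose that segment carefully.
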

\begin{proof}
As the union of transitive models of IKP (see lemma \ref{local IKP}), {\bf M} 
automatically satisfies Empty Set, Infinity, Union, and $\Delta_{0}$-Comprehension.
One of the more difficult axioms to check, surprisingly enough,
is Pairing. That is the point of the penultimate corollary, which also implies 
Extensionality. Foundation holds because {\bf M} is well-founded in {\bf V}.

The only other axiom is $\Delta_{0}$ Bounding. The choice of I will
be exactly to ensure this. The construction is really the same as in
\cite{metoo}; we replicate the main idea here to make this paper
self-contained. Notice that for any choice of I any node other
than $\bot$ forces IKP, because $\bot_{i} \models$ {\bf M} = {\bf M}$_{i}$ if i
$\in$ I, and $\bot_{i} \models$ {\bf M} = ``{\bf M}$_{i}$ without the
branch'' which also models IKP, if  i $\not\in$ I. So we need concern
ourselves only with functions forced to be total by $\bot$. There are
countably many $\Delta_{0}$ definitions (with parameters) which might
define a total function on a set. We'll handle the nth, $\phi_{n}(x, y)$, at
stage n. Assume
inductively we have max$_{n}$ a non-standard distance above min$_{n}$ in
W (max$_{0}$ and min$_{0}$ being arbitrary elements of W a non-standard distance apart).
By the discussion about W (just before the lemmas of this section), let m be a
non-standard distance above min$_{n}$ and a non-standard distance below max$_{n}$ (i.e.
sn(min$_{n}$) $<$ m and sn(m) $<$ max$_{n}$). In {\bf M$_{m}$} either $\phi_{n}$ defines
a total function or it doesn't. In the former case, {\bf M$_{m}$} contains a bounding
set (by lemma \ref{local IKP}), which is also a bounding set in any model extending
{\bf M$_{m}$}, so let max$_{n+1}$ = m (and let min$_{n+1}$ = min$_{n}$). In the latter
case, $\phi_{n}$ is not total in {\bf M$_{m}$}, or, for that matter, in any transitive 
submodel thereof (by the absoluteness of $\Delta_{0}$ formulas among transitive models).
So let min$_{n+1}$ = m (and let
max$_{n+1}$ = max$_{n}$). Let I = \{i $\mid \exists$n i $\geq$ max$_{n}$\}.
\end{proof}

\begin{lemma}
{\bf M} $\models$ V=L.
\end{lemma}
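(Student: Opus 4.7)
The plan is to leverage the already-established fact that each ${\bf M}_i \models V=L$, plus the containment ${\bf M}_i \subseteq {\bf M}_k$ for $k \leq_W i$, to conclude $V=L$ for the union. The basic observation is that every set $x \in {\bf M}$ lives in some ${\bf M}_i$ ($i \in I$), and the corresponding ${\bf M}_i$ already produces a constructibility ordinal $\alpha$ and a stage of the constructible hierarchy realizing $x$. The only real content is to check that this ``constructibility certificate'' survives the transition from ${\bf M}_i$ to the larger union ${\bf M}$.

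First I would verify that ${\bf M}$ is an end-extension of each ${\bf M}_i$: since each ${\bf M}_i$ is transitive in ${\bf V}$ and they fit together coherently (by the containment corollary), the union inherits end-extensionality for members of any ${\bf M}_i$. Combined with the preceding lemma \ref{IKP2}, this gives that ${\bf M}$ is admissible and end-extends each ${\bf M}_i$. Second, I would invoke the $\Delta_1$-definability of $X \mapsto \mathrm{def}(X)$ recalled in the proof of lemma \ref{L}, which is provable in IKP. As a consequence, for any ordinal $\alpha \in {\bf M}_i$, the externally and internally computed hierarchy agrees: $L^{{\bf M}_i}_\alpha = L^{{\bf M}}_\alpha$. (This absoluteness runs by $\in$-induction on $\alpha$ using end-extensionality, exactly as in lemma \ref{L}; the $\Delta_1$-definability of $\mathrm{def}$ ensures each successor stage is computed the same way in the two models, and limits are handled by unions.)

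With these two facts in hand, the proof is immediate: given $x \in {\bf M}$, pick $i \in I$ with $x \in {\bf M}_i$. Since ${\bf M}_i \models V=L$, there is an ordinal $\alpha \in {\bf M}_i$ with $x \in L^{{\bf M}_i}_\alpha$. By the absoluteness above, $x \in L^{{\bf M}}_\alpha$, so $x \in L^{{\bf M}}$. As $x$ was arbitrary, ${\bf M} \models V=L$.

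The main obstacle is the absoluteness of the $L$-hierarchy between ${\bf M}_i$ and ${\bf M}$, specifically keeping careful track of the intuitionistic meaning of ``def'' in a Kripke setting. This is not entirely trivial because the Kripke structure ${\bf M}_i$ and ${\bf M}$ share the same underlying partial order and nodes, but the definability operation could in principle produce new sets at each stage that depend on the ambient universe. However, because $\mathrm{def}$ is $\Delta_1$ in IKP (and hence absolute between an admissible model and any admissible end-extension of it), this apparent issue dissolves, and the proof reduces to straightforward $\in$-induction along the construction of $L^{{\bf M}_i}$.
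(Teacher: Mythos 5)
Your proof is correct and takes the same route as the paper, whose entire proof is the one-liner ``the union of models of V=L is also a model of V=L''; your argument simply supplies the absoluteness details (end-extension plus $\Delta_1$-definability of $\mathrm{def}$ in IKP) that justify that one-liner.
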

\begin{proof}
The union of models of V=L is also a model of V=L.
\end{proof}

\begin{lemma}
{\bf M} $\not \models \Pi_{2}$-Reflection.
\end{lemma}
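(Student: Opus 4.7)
The plan is to mimic the classical argument from \cite{metoo} cited in Section 3, but working at the node $\bot_{j_{0}}$ for a fixed $j_{0} \in I$ rather than at the root $\bot$. This choice lets us sidestep the intuitionistic subtlety that at $\bot$ the degenerate Kripke set $\{1\}$ can force ``is a branch through $\hat{P}_{j}$'' at extensions incompatible with $\bot_{j}$, which would scuttle a naive verification of the $\Pi_{2}$ sentence at $\bot$. Let $f \in \mathbf{M}$ be the function sending $j \in \omega$ to $\hat{P}_{j}$ (available in $\mathbf{M}$ by the analysis at the start of this section). With $\pi_{1}, \pi_{2}$ denoting the pair projections, define the $\Delta_{0}$ formula
$$\phi(x,y) \equiv \neg(\pi_{2}(x) \mbox{ is a branch through } f(\pi_{1}(x))) \vee (\pi_{1}(y) <_{W} \pi_{1}(x) \wedge \pi_{2}(y) \mbox{ is a branch through } f(\pi_{1}(y))),$$
and let $\psi := \forall x\, \exists y\, \phi(x,y)$, which is $\Pi_{2}$. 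The goal is to show $\bot_{j_{0}} \models \psi$ while the corresponding instance of $\Pi_{2}$-Reflection is not forced at $\bot_{j_{0}}$.

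First I would verify $\bot_{j_{0}} \models \psi$. Given any $\tau \geq \bot_{j_{0}}$ and any $x = (j, B) \in \mathbf{M}_{\tau}$, pick $i \in \omega$ with $i <_{W} j$ and $i \neq j_{0}$ (possible since $W$ has no minimum and any $j$ has infinitely many $W$-predecessors). Since $\tau \geq \bot_{j_{0}}$ we have $\tau' \perp \bot_{i}$ for every $\tau' \geq \tau$, so $\hat{P}_{i}$ collapses to $\{1\}$ at all such $\tau'$ and the Kripke set $\{1\} \in \mathbf{M}$ satisfies $\tau \models \{1\}\ \mbox{is a branch through}\ \hat{P}_{i}$. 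Hence $y = (i, \{1\})$ witnesses the second disjunct of $\phi$ at $\tau$.

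Second I would falsify the corresponding instance of $\Pi_{2}$-Reflection by taking $A = \{(j_{0}, B_{j_{0}})\}$, which lies in $\mathbf{M}$ since $j_{0} \in I$. Suppose for contradiction $B \in \mathbf{M}$ witnessed $\bot_{j_{0}} \models B \supseteq A \wedge \forall x \in B\, \exists y \in B\, \phi(x,y)$; externally $B \in \mathbf{M}_{k}$ for some $k \in I$. Since $\bot_{j_{0}} \models B_{j_{0}}\ \mbox{is a branch through}\ \hat{P}_{j_{0}}$, the first disjunct of $\phi$ at the input $(j_{0}, B_{j_{0}})$ fails, so the closure produces some $(i_{1}, B_{1}') \in B$ satisfying the second disjunct, i.e., $i_{1} <_{W} j_{0}$ and $\bot_{j_{0}} \models B_{1}'\ \mbox{is a branch through}\ \hat{P}_{i_{1}}$. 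Because this last fact forces the first disjunct at $(i_{1}, B_{1}')$ to fail as well, the closure yields $(i_{2}, B_{2}') \in B$ with $i_{2} <_{W} i_{1}$, and iterating produces an infinite $W$-descending sequence $j_{0} >_{W} i_{1} >_{W} i_{2} >_{W} \cdots$.

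To convert this into a genuine contradiction, work in the classical ambient admissible $L_{\omega_{1}^{CK}}$, assuming the reference real used to construct the branches $B_{j}$ is recursive so that $\mathbf{M}_{k}$ is a definable subclass of $L_{\omega_{1}^{CK}}$ and $B$ is a set there. Using the $\Sigma_{1}$-definable $L$-order, define $g: \omega \to \omega$ by $\Sigma_{1}$-recursion in $L_{\omega_{1}^{CK}}$: $g(0) = j_{0}$, and $g(n+1)$ is the $L$-least $i$ such that some $B^{*}$ has $(i, B^{*}) \in B$, $\bot_{j_{0}} \models_{\mathbf{M}_{k}} B^{*}\ \mbox{is a branch through}\ f(i)$, and $i <_{W} g(n)$. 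The iteration above ensures $g(n+1)$ exists, so by admissibility $g \in L_{\omega_{1}^{CK}}$ is a hyperarithmetic infinite $W$-descending sequence, contradicting the defining property of $W$. The main obstacle is the bookkeeping in this last step: verifying that the Kripke-forcing relation over $\mathbf{M}_{k}$ for $\Delta_{0}$ formulas is $\Sigma_{1}$-definable over $L_{\omega_{1}^{CK}}$ so the $\Sigma_{1}$-recursion defining $g$ is legitimate, which in turn reduces to the fact that $\mathbf{M}_{k}$ is built by a uniform $\Sigma_{1}$-recursion over $L_{\omega_{1}^{CK}}$ from hyperarithmetic data alone.
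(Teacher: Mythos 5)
Your overall architecture is the paper's: a $\Pi_{2}$ sentence asserting ``every branch through some $f(j)$ is accompanied by a branch through some $f(i)$ with $i<_{W}j$,'' verified globally but refuted in every set by extracting, via $\Sigma_{1}$ recursion in the admissible ambient universe, a hyperarithmetic $W$-descending sequence. The final extraction step is fine and matches the paper (which is terser about it). But your modification --- relocating the argument to $\bot_{j_{0}}$ and replacing the conditional by the disjunction $\neg(\dots)\vee(\dots)$ --- breaks the verification that $\psi$ is forced. The step ``pick $i<_{W}j$ with $i\neq j_{0}$, possible since any $j$ has infinitely many $W$-predecessors'' is unjustified and in fact cannot be arranged: if every element of a recursive linear order $W$ had a $W$-predecessor, then $g(0)=0$, $g(n+1)=$ the numerically least $k$ with $k<_{W}g(n)$ would be a \emph{recursive} infinite descending sequence, contradicting the defining property of $W$. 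So $W$ necessarily has a $W$-least element $m$ (the standard Harrison-type order even has a well-ordered initial segment of type $\omega_{1}^{CK}$), and $m\neq j_{0}$ since $j_{0}\in I$ and $I$ has no least element. Then $x=(m,\{1\})$ refutes $\psi$ at $\bot_{j_{0}}$: every node $\geq\bot_{j_{0}}$ is incompatible with $\bot_{m}$, so $f(m)$ is $\{1\}$ there and $\{1\}$ is a branch through it, killing your first disjunct, while no $i<_{W}m$ exists to witness the second. Hence the hypothesis of the $\Pi_{2}$-Reflection instance is not forced and nothing follows about reflection.

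The fix is not to retreat to $\bot_{j_{0}}$ but to stay at $\bot$ and change the shape of the matrix, which is what the paper does: it uses an implication ``if $B_{j}$ is a branch through $f(j)$ then $f(j)=\{1\}$ or ($i<_{W}j$ and $B_{i}$ is a branch through $f(i)$)'' rather than a negated disjunct. At $\bot$ no $f(j)$ has yet collapsed, and the symmetry lemma shows that $\bot\models$ ``$B$ is a branch through $f(j)$'' forces $j\in I$, where a $<_{W}$-predecessor in $I$ with a genuine branch always exists; the degenerate $\{1\}$-branches that appear at nodes incompatible with $\bot_{j}$ are absorbed by the escape disjunct $f(j)=\{1\}$. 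Note also that at $\bot_{j_{0}}$ even the paper's sentence would not yield a failure of reflection: since $f(i)=\{1\}$ there for all $i\neq j_{0}$, the escape disjunct terminates the descent after one step, so a two-element set already reflects. Working at the root is essential, not merely convenient.
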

\begin{proof}
The assertion ``B is a branch through S'' is $\Delta_{0}$, hence absolute
in all
transitive sets containing the parameters. So if $\bot \models$ ``B is a
branch through f(j)'' then j $\in$ I. Moreover, if j $\in$ I, then $\bot
\models$ ``there is a branch through f(j)''. So $\bot \models ``\forall j,
B_{j} \; \exists i, B_{i}$ if B$_{j}$ is a branch through f(j) then either
f(j) = \{1\} or (i$<_{W}$j and B$_{i}$  is a branch through f(i))''. But
this does not reflect to any set, because each set is a member of some {\bf
M}$_{i}$, which is
constructible in L$_{\omega_{1}^{CK}}$, in which W has no infinite
descending sequence.
\end{proof}

\section{Questions}

$\;\;\;\;\;$1. Is there a model of $\Sigma_{1}$ DC + IKP + V=L +
$\neg$Resolvability?

2. Is there a model of IKP + V=L + $\neg \Pi_{2}$-Reflection?

3. Does IZF prove that admissible sets are unbounded (that is, for each set
X is
there a
transitive admissible set containing X)? Does IZF prove the existence of an
admissible set
with $\omega$? The usual constructions of admissible sets use either reflection
or the
linearity of the ordinals, neither of which is available to us here.

4. Does IZF$_{Rep}$ (that is, IZF with the Bounding Schema substituted by the
Replacement Schema) prove IKP? With some better technology (see question 6) it
should be possible to show that
IZF$_{Rep}$ - Power does not prove IKP. Also, \cite{FS} (with a minor
alteration) shows
that IZF$_{Rep}$ does not prove $\Pi_{1}$ Bounding. But whether IZF$_{Rep}$
proves
$\Delta_{0}$ Bounding is apparently unknown.

5. As has already been observed, least fixed points and greatest fixed points
are closely
connected classically, but not so intuitionistically. How much can they be
separated? Can
IKP be strengthened so that even more least fixed points become definable,
without
picking up $\Pi$ Persistence or any greatest fixed points? Conversely, can
$\Pi$
Persistence by extended to get more greatest fixed points, without picking up
IKP or least
fixed points? There are connections between least and greatest fixed points and
their
categorical analogues, initial algebras and final co-algebras of functors, so
these matters
could be pursued from this angle; but not so much is known even there, and the
connections are not yet so clear. These considerations suggest a similar yet
simpler
question, involving the first order quantifiers: has anyone investigated
existential
intuitionistic logic (with $\exists$, even $\neg \exists \neg$, but not
$\forall$), or universal
intuitionistic logic?

6. Are there other natural examples of the difference between IKP and $\Pi$
Persistence?

7. Another variant on IKP yet to be mentioned is the weakening of $\Delta_{0}$
Bounding
to $\Sigma_{1}$ Replacement. It can be shown that classically $\Sigma_{1}$
Replacement
does not imply $\Delta_{0}$ Bounding; in fact, it is possible to build a
classical
model of full Replacement + $\neg \Delta_{0}$ Bounding. It would have been nice
to have included in this paper an adaptation of this model to IL. However, this
project brings up a host of new problems. All of the models presented here
which use forcing do so only in
an ambient classical universe to get some particular desired property. For
instance, a failure of $\Sigma_{1}$ DC in a classical model is imported to
become a failure
of $\Sigma_{1}$ DC in the intuitionistic model. In order to get $\Delta_{0}$
Bounding to fail, the forcing has to be done internally. Unfortunately,
intuitionistic forcing technology is not yet well enough developed to carry
such a burden. Certainly some work has already been done in this area; see
\cite{beeson} for instance. But not enough has been done yet to provide a
robust enough theory to carry through the construction envisioned here.
This should be possible, but would take us too far beyond the scope of the
current paper.

\end{document}